\newtheorem{thm}{Theorem}[section]
\newtheorem{lem}{Lemma}[section]
\newtheorem{prop}{Proposition}[section]
\theoremstyle{definition}
\newtheorem{defn}{Definition}[section]
\theoremstyle{remark}
\newtheorem{rem}{Remark}[section]
\numberwithin{equation}{section}
\newcommand{\bu}{\mathbf{u}}
\newcommand{\bv}{\mathbf{v}}
\newcommand{\bw}{\mathbf{w}}
\newcommand{\bff}{\mathbf{f}}
\newcommand{\bx}{\mathbf{x}}
\newcommand{\bd}{\mathbf{d}}
\newcommand{\bh}{\mathbf{h}}
\newcommand{\bt}{\mathbf{t}}
\newcommand{\bg}{\mathbf{g}}
\newcommand{\eeta}{\bm{\eta}}
\newcommand{\bxi}{\bm{\xi}}
\def\Oh{{\mathcal O}}
\newcommand{\beq}{\begin{equation}}
\newcommand{\eeq}{\end{equation}}
\title[Dislocations in a multi-layered elastic solid]
{Dislocations in a multi-layered elastic solid with applications to  fault and interface identifications}
\author{Huaian Diao}
\address{School of Mathematics, Jilin University and Key Laboratory of Symbolic Computation and Knowledge Engineering of Ministry of Education, Changchun, Jilin, China.}
\email{diao@jlu.edu.cn, hadiao@gmail.com}
\author{Hongyu Liu}
\address{Department of Mathematics, City University of Hong Kong, Kowloon Tong, Hong Kong SAR, China.}
\email{hongyu.liuip@gmail.com; hongyliu@cityu.edu.hk}
\author{Qingle Meng}
\address{Department of Mathematics, City University of Hong Kong, Kowloon Tong, Hong Kong SAR, China.}
\email{mengql2021@foxmail.com; qinmeng@cityu.edu.hk}
\date{}
\begin{document}

\maketitle
\begin{abstract}
This paper investigates an elastic dislocation problem within a bounded and multi-layered solid governed by the Lam\'e system. We address the simultaneous reconstruction of the faults, the jumps in displacement and traction fields across the faults, and the interfaces of layers using a single passive boundary measurement. This inverse problem is particularly challenging due to the discontinuities in both the displacement and traction fields across the faults and the inherent difficulty of establishing uniqueness results with limited measurement data. Under the assumptions that the Lam\'e parameters are piecewise constants within each layer, satisfying strong convexity conditions, and that the faults exhibit corner singularities, we establish local uniqueness identifiability results for both the interfaces and the faults, as well as the jumps across the faults. Furthermore, we derive global uniqueness results for reconstructing the interfaces, the faults, and the corresponding displacement and traction jumps in generic scenarios under a priori geometric information, where the faults are geometrically general and may be either open or closed.
\medskip

\noindent{\bf Keywords:}~~dislocations, elasticity, corners, jump, inverse problem, uniqueness, a single boundary measurement.

\medskip

\noindent{\bf 2020 Mathematics Subject Classification:}~~74B05; 94C12; 35A02
\end{abstract}


\section{Introduction}
\subsection{Mathematical setup}
 Initially focusing on mathematics, but not physics and geography applications, we begin by
introducing the mathematical setup of the study. Let $\Omega$ be a bounded Lipschitz domain in $ \mathbb{R}^n$ with $n = 2, 3$,  composed of elastic materials. The Lam\'e parameters, denoted by $\lambda $ and $\mu$, characterize the elastic properties of the materials in $\Omega$, which are positive real-valued functions and belong to $L^\infty(\Omega)$.
 The elasticity tensor for $\Omega$ is denoted by $\mathcal{C}(\mathbf{x})$, and its components are defined as follows:
\begin{equation*}
\mathcal{C}_{ijkl}(\mathbf{x})=\lambda(\mathbf{x})\delta_{ij}\delta_{kl}+\mu(\mathbf{x})(\delta_{ik}\delta_{jl}+\delta_{il}\delta_{jk}),
\end{equation*}
where $\delta$ represents the Kronecker delta. The parameters $\lambda$ and $\mu$ are assumed to satisfy the strong convexity conditions:
\begin{equation}\label{tensor2}
\mu(\mathbf{x})> 0 \quad \mbox{and} \quad 2\mu(\mathbf{x})+n\lambda(\mathbf{x})>0,\quad \forall \,\,\mathbf{x}\,\in\Omega.
\end{equation}


Assume that there exists an oriented Lipschitz curve or surface, denoted by $\Sigma \Subset \Omega$, that models a buried dislocation, also referred to as a fault. Across this fault, discontinuities may occur, resulting in jumps in both displacement and traction fields between the two sides of $\Sigma$, denoted as $\Sigma^\pm$. The jump associated with the displacement field is referred to as a slip. Here,  $\Sigma^+$ represents the side where the unit normal vector $\nu$ points towards the boundary $\partial \Omega$, while $\Sigma^-$ represents the opposite side.  The jumps across $\Sigma$ are expressed as follows:
\begin{equation*}
\begin{cases}
[\bu]_\Sigma=\bu\big|_{\Sigma^+}-\bu\big|_{\Sigma^-},\\
[\mathcal{T}_\nu\mathbf{u}]_\Sigma=\mathcal{T}_\nu\mathbf{u}\big|_{\Sigma^+}-\mathcal{T}_\nu\mathbf{u}\big|_{\Sigma^-},
\end{cases}
\end{equation*}
where
\begin{equation*}
\begin{cases}
\mathcal{T}_\nu\mathbf{u}=\lambda(\nabla\cdot\mathbf{u})\nu+2\mu(\nabla^s\mathbf{u}),\\
\bu\big|_{\Sigma^\pm}=\lim\limits_{\varepsilon\rightarrow 0^+}\bu\big(\bx\pm \varepsilon\nu(\bx)\big),\\
\mathcal{T}_\nu\mathbf{u}\big|_{\Sigma^\pm}=\lim\limits_{\varepsilon\rightarrow 0^+}\mathcal{T}_\nu\mathbf{u}\big(\bx\pm \varepsilon\,\nu(\bx)\big).
\end{cases}
\end{equation*}
Here, $\nabla^s\mathbf{u}:=\frac 1 2(\nabla\mathbf{u}+\nabla\mathbf{u}^t) $, with the superscript $``\,t\,"$  indicating the matrix transpose.

This paper considers a generic elastic PDE system that permits jumps in the displacement and traction fields across \(\Sigma\) within a priori function spaces (see Subsection \ref{sub:Functional}). In the context of the direct problem, our goal is to find $\mathbf{u}\in H^1_{\partial \Omega_2}(\Omega\backslash\overline{\Sigma})^n$ satisfying
\begin{equation}\label{eq:elast1}
\begin{cases}
\Delta^*\mathbf{u}(\mathbf{x})+\omega^2\,\mathbf{u}=\mathbf{0},\quad\mathbf{x}\in\Omega\backslash\overline{\Sigma},\\
\mathcal{T}_\nu\mathbf{u}|_{\partial \Omega_2}=\mathbf{0},\quad \mathbf{u}|_{\partial \Omega_1}=\mathbf{0},\\
[\mathbf{u}]_\Sigma=\mathbf{f},\,\,\quad\quad [\mathcal{T}_\nu \mathbf{u}]_\Sigma=\mathbf{g},
\end{cases}
\end{equation}
where $\omega\in\mathbb{R}_+$ denotes the angular frequency of the elastic displacement. Additionally, $\partial \Omega_1$ and $\partial \Omega_2$ form a Lipschitz dissection of $\partial \Omega$ (i.e., $\partial\Omega=\partial\Omega_1\cup\partial\Omega_2$ and $\partial \Omega_1\cap\partial \Omega_2=\emptyset$). The Lam\'e operator $\Delta^*(\cdot)$ is defined as follows:
\begin{equation}\label{def:Lam}
\Delta^* \mathbf{u}:=\mu\Delta \mathbf{u}+(\lambda+\mu)\nabla\nabla\cdot \mathbf{u}.
\end{equation}
The elastic model \eqref{eq:elast1} can be obtained by considering the time-harmonic elastic scattering of surface sources in \(\mathbb{R}^n \setminus \Sigma\) (see \cite{PowW2021}), which can be established by introducing the Dirichlet-to-Neumann map for truncating the unbounded domain \(\mathbb{R}^n \setminus \Sigma\) to a bounded one. In this paper, the domain \(\Omega\) is considered to be partitioned into a finite number of disjoint subdomains \(\Omega_1, \Omega_2, \ldots, \Omega_N\), where \(N \in \mathbb{N}\) and \(N \geq 1\). Specifically,
\begin{equation}\label{Part}
\Omega = \bigcup_{l=1}^N \Omega_l\quad \text{and}
\quad \begin{cases}
\overline{\Omega}_i \cap \overline{\Omega}_{j}=\emptyset& \text{when}\quad j-i\geq 2,\\[2mm]
\overline{\Omega}_i \cap \overline{\Omega}_{j}=\Gamma_i
& \text{when}\quad j-i= 1,
\end{cases}
\end{equation}
where $\Gamma_i$ is called the interface between subdomains $\Omega_i$ and $\Omega_{i+1}$ for $i = 1, \ldots, N-1$. In addition, $\partial \Omega_0\Subset\partial \Omega_1\cap\partial \Omega$; see Fig.~\ref{Fig1} for a schematic illustration.

For the solution $\bu$ to Problem~\eqref{eq:elast1}, introduce the mapping defined as follows:
\begin{equation}\label{eq:measurement}
\mathcal{F}_{\Sigma, \mathbf{f},\mathbf{g},\Gamma_1,\ldots, \Gamma_{N-1}}=\mathbf{u}\big|_{\partial \Omega_0},
\end{equation}
where $\partial\Omega_0$ is a nonempty subset of $\partial \Omega_2$.   Indeed, $\mathcal{F}_{\Sigma;\mathbf{f},\mathbf{g},\Gamma_l}$ encodes the elastic deformation data induced by the fault $\Sigma$, along with the jumps $\mathbf{f}$ and $\mathbf{g}$, as observed on $\partial\Omega_0$. For the inverse problem, we aim to determine the fault $\Sigma$, the interfaces $\Gamma_1,\dots,\Gamma_{N-1}$, and the jumps $\mathbf{f}$ and $\mathbf{g}$ using knowledge of a single passive measurement on $\partial \Omega_0$ given by the mapping \eqref{eq:measurement}. Specifically,
\begin{equation}\label{eq:ip1}
\mathcal{F}_{\Sigma;\mathbf{f},\mathbf{g},\Gamma_1, \ldots, \Gamma_{N-1}}\longrightarrow \Sigma, \mathbf{f}, \mathbf{g},\ldots, \Gamma_{N-1}.
\end{equation}

The local and global uniqueness results presented in this paper apply to scenarios where the fault $\Sigma$ can be either open or closed. The main contributions of this paper are summarized as follows:

\begin{itemize}
    \item \textbf{Local Unique Identifiability of Interfaces and Faults}:
    Assuming that the Lam\'e parameters $\lambda$ and $\mu$ are piecewise constants within each layer and satisfy the strong convexity condition \eqref{tensor2}, we prove that the symmetric difference $\Gamma^1_i \Delta \Gamma^2_i := (\Gamma^1_i \setminus \Gamma^2_i) \cup (\Gamma^2_i \setminus \Gamma^1_i)$ for $i = 1, 2, \dots, N-1$ between two distinct interfaces $\Gamma^j_1, \Gamma^j_2, \dots, \Gamma^j_{N-1}$ ($j = 1, 2$) of $\Omega$ cannot exhibit corner singularities, including planar and 3D edge corners. Furthermore, Theorem \ref{th:main_loca} establishes the local unique identifiability of the faults. Under the generic conditions specified in Definitions \ref{def:Admis1-2} and \ref{def:Admis1-3}, the difference between two distinct faults $\Sigma_1$ and $\Sigma_2$ also lacks corner singularities.

    \item \textbf{Global Uniqueness with A Priori Information}:
    If a priori information is available regarding the interfaces $\Gamma_1, \Gamma_2, \ldots, \Gamma_{N-1}$, specifically that they are piecewise polygonal, these interfaces can be uniquely identified using a single passive measurement on $\partial \Omega_0$. Moreover, in practical scenarios where the open fault is modeled as a polygonal curve or surface, and the closed fault is the boundary of a convex polygon or polyhedron, we demonstrate that, under the generic conditions outlined in this work, the fault (which may span multiple layers of the bounded domain) can be uniquely determined with only one measurement. For further details, see Theorems \ref{th:main1} and \ref{th:main2}.
\end{itemize}

\subsection{Connection to existing studies and discussion}

The study of elastic dislocation problems has long been a central focus across a wide range of scientific and engineering disciplines, including crystalline materials, geophysics, and seismology; see \cite{Ar2002book, BM1985, M1949} and the references therein for comprehensive discussions. In classical elastic dislocation models, displacement fields are typically assumed to exhibit discontinuities, while traction fields are considered continuous. This assumption simplifies the relationship between displacement and traction fields to a linear form, as demonstrated in \cite{CS1995, M1949}. Extensive theoretical research has been conducted on elastic dislocation problems in both half-space domains \cite{Volkov2017, VS2019, TV2020, ABMH2020} and bounded domains \cite{ABM2022, ABHM2022, BF2006, BFV2008}, with a primary focus on identifying faults and their associated displacement fields from boundary measurements.

Numerical studies in unbounded domains have utilized a variety of approaches, including a two-step algorithm based on a nonlinear quasi-Newton method \cite{AS1994}, stochastic and statistical techniques \cite{EM2012, FW2008, VS2019}, and iterative methods derived from Newton’s approach \cite{Volkov2017}. In contrast, numerical research in bounded domains can be founded in \cite{BFKL2010} for the reconstruction for a linear crack. Volkov et al. \cite{Volkov2017} introduced iterative methods for detecting fault planes and tangential slips using limited surface measurements. Further progress in the field includes the development of Bayesian approaches in \cite{VS2019} to quantify uncertainties in reconstructions and the derivation of stability estimates for constant Lam\'e coefficients in \cite{TV2020}. For elastic dislocation problems where traction fields are continuous across the faults and only displacement fields exhibit jumps, the authors in \cite{ABM2022, ABHM2022} demonstrated well-posedness through variational methods by introducing functional spaces with favorable extension properties. Furthermore, they established unique results for determining the fault geometry and the slip distribution based on a single boundary measurement.

Traction fields in elastic dislocation problems can exhibit discontinuities across faults, as explored in \cite{M1949, BM1985, Pichierri2020, Ar2002book}. For example, the two-dimensional linear elastic model proposed in \cite{Pichierri2020} demonstrates the simultaneous occurrence of discontinuities in both displacement and traction fields along a fault modeled as a straight interface. Open faults, such as creeping faults, have garnered significant attention due to their important implications in geophysics \cite{ABM2022}. In contrast, closed faults in anisotropic and inhomogeneous elastic media have been rigorously analyzed, with well-posedness established using integral equation methods \cite{PowW2021}. In \cite{DLM2023}, we investigated elastic dislocations in isotropic, homogeneous, and bounded domains using a single boundary measurement. Our model incorporates discontinuities in both displacement and traction fields along fault surfaces characterized by corner singularities, encompassing both open and closed faults. For the direct problem associated with classical elastic dislocations, extensive studies have been conducted in both bounded and unbounded domains. Well-posedness results for these problems can be found in \cite{Zwieten2014, Costabel1993, Mazzucato2010, Dauge1988, Nicaise1992, Li2013}.

In this work, we investigate the dislocation problem in a layered and bounded elastic domain. Compared to existing results (see, e.g., \cite{Volkov2017, VS2019, TV2020, ABMH2020, ABM2022, ABHM2022, BF2006, BFV2008}), the model in \eqref{eq:elast1} is more complex, involving jumps in both the displacement and traction fields along the fault. The fault may be open or closed and is characterized by corner singularities, including both 2D corners and 3D edge corners. The well-posedness of Problem \eqref{eq:elast1} can be established through a variational approach, analogous to the methodology outlined in \cite{DLM2023}. As highlighted in \cite{CK, CK2018, Isa2017}, achieving uniqueness in inverse scattering problems with a single measurement remains a significant challenge, as existing uniqueness results typically require a priori geometric information. We refer to \cite{CK, CK2018, Isa2017} and the references therein for a comprehensive discussion of these developments. Similar challenges arise in the inverse problem associated with dislocation scenarios. To address this, it is necessary to incorporate a priori information regarding the geometry of the faults, the regularity of the displacement jumps, and assumptions about the interfaces. These considerations are crucial for establishing unique identifiability results with a single measurement, as formalized in Definitions \ref{def:Admis1-3}--\ref{def:poly}. Importantly, our analysis encompasses scenarios where the fault may be open or closed and extends across multiple layers within the bounded domain.

Unlike existing processing techniques, this paper conducts a local singularity analysis of the elastic field around the corners of faults using a microlocal approach. This allows us to characterize the jumps in the displacement and traction fields across these faults. To achieve this, we impose certain H\"older regularity assumptions on the jumps around the underlying corners, as detailed in Definitions \ref{def:Admis1-2} and \ref{def:Admis1-3}. In the 3D case, an additional assumption is necessary: both jumps must be independent of one spatial variable, as clarified in Definition \ref{def:Admis1-3}.
We employ Complex Geometric Optics (CGO) solutions for the underlying elastic system to characterize these jump functions at the 2D corner points and 3D edge points. Our study involves intricate and delicate analysis, leading to unique results for the inverse problem of determining the faults, their associated jumps, and the interfaces from a single passive measurement. These results hold regardless of whether the fault curves or surfaces are open or closed.


The remaining sections of the paper are organized as follows. In Section \ref{Main results}, we present the preliminary spaces, the admissible assumptions for the inverse problems, and state our main results. In Section \ref{se:pre}, we derive several local results for the jump vectors $\bff$ and $\bg$ at the corner points on the fault $\Sigma$.  Section \ref{proofs} is dedicated to proving the uniqueness of the inverse problem as outlined in Theorems \ref{th:main_loca}-\ref{th:main2}.

\par
\section{Preliminaries and statements of the main results
}\label{Main results}
\subsection{Functional space settings}\label{sub:Functional}

The fault $\Sigma$ of interest may be either open or closed, which implies that the regularity requirements for the jumps $\bff$ and $\bg$ may vary depending on the specific context. Subsequently, we outline several related function spaces. The notation $H^s(\Sigma)^n$ denotes the standard Sobolev space of vector type, defined as the $L^2$-based Sobolev space with regularity index $s\in \mathbb{R}$ and the corresponding dual space is denoted by $H^{-s}(\Sigma)^n$. Additionally,
$C^\infty(\Sigma)$  represents the space of smooth scalar functions within the domain $\Sigma$, while  $C^\infty_0(\Sigma)^n$ denotes the space of smooth vector functions with compact support in $\Sigma$. When $s\geq 0$, $H_{0}^{s}(\Sigma)^n$ denotes the closure of $ C_0^\infty(\Sigma)^n$ with respect to the norm $\|\cdot\|_{H^{s}(\Sigma)^n}$.

In this paper, we focus exclusively on the regularity index $s=1/2$. Indeed, there exists a specific issue regarding the continuous extension of $H^{1/2}_0(\Sigma)$  to $H^{1/2}_0(\mathcal{\mathbb{R}}^n)$  when $\Sigma$ is open. It is a well-known fact that $H_{0}^{1/2}(\Sigma)^n=H^{1/2}(\Sigma)^n$ in this case. Following the method  described in (cf. \cite{ABHM2022, ABM2022, Lions1973}), we introduce the so-called {\it Lions-Magenes} space \cite{Lions1973} and its dual space, denoted by $H_{00}^{\frac{1}{2}}(\Sigma)^n$ and $ H^{-\frac{1}{2}}_0(\Sigma)^n$, respectively. For the reader's convenience, we briefly review this space. For a more detailed discussion, please refer to \cite{Mclean2010, Tartar2006, Cessenat1998}. The weighted space mentioned above is defined as follows:
\begin{align*}
H_{00}^{1/2}(\Sigma)^n=\left\{ \bu\in H_{0}^{1/2}(\Sigma)^n;\,\,\varrho^{-1/2} \bu\in L^2(\Sigma)^n      \right\}
\end{align*}
associated with the norm
\begin{alignat*}{2}
\|\bu\|_{H_{00}^{1/2}(\Sigma)^n}&=\|\bu\|_{H^{1/2}(\Sigma)^n}+\|\varrho^{-1/2}\bu\|_{L^2(\Sigma)^n}
&\,\quad\mbox{for}\,\,&\bu\,\in H_{00}^{1/2}(\Sigma)^n,
\end{alignat*}
where $\varrho\in C^\infty(\overline{\Sigma})$denotes a weight function with the following properties: (1) $\varrho$ has the same order as the distance to the boundary (i.e., $\lim\limits_{\bx\rightarrow \bx_0}\frac{\varrho(\bx)}{\mathrm{d}(\bx,\partial \Sigma)}=d\neq 0,\, \forall \,\bx_0\,\in \partial \Sigma$); and (2) $\varrho(\bx)$ is positive in $\Sigma$ and $\varrho$ vanishes on $\partial \Sigma$.

Let $\widetilde{\Sigma}$ be a closed Lipschitz curve or surface extending $\Sigma$ and satisfying $\widetilde{\Sigma}\cap \partial \Omega=\emptyset$, where $\widetilde{\Sigma}=\overline{\Sigma}\cup\Sigma_0$. Here,  $\Sigma_0$ is a curve or surface that connects to the boundary $\partial\Sigma$ satisfies  $ \Sigma_0\cap( \Sigma\backslash \partial \Sigma) =\emptyset $. Furthermore, let $\bff$ be continuously extended to $\widetilde{\bff}\in H^{1/2}(\widetilde{\Sigma})^n$ by setting it to zero on $\widetilde{\Sigma} \backslash \Sigma$. Similarly, let $\bg$ be continuously extended to $\widetilde{\bg}\in H^{-\frac{1}{2}}(\widetilde{\Sigma})^n$  by setting it to zero on $\widetilde{\Sigma} \backslash \Sigma$. As discussed in \cite{ABHM2022, ABM2022, Tartar2006}, we know that $H^{1/2}_{00}(\Sigma)$ is the optimal subspace of the space $H^{1/2}(\widetilde{\Sigma})$, whose elements can be continuously extended by zero to a component of $H^{1/2}(\widetilde{\Sigma})$.

Thus, the functional setting for the jumps $\bff$ and $\bg$ is summarized as follows:
\begin{itemize}
	\item If $\Sigma$ is open, then $\mathbf{f}\in H_{00}^{1/2}(\Sigma)^n$ and $ \mathbf{g}\in H_0^{-1/2}(\Sigma)^n$;
\item If $\Sigma$ is closed, then $\mathbf{f}\in H^{1/2}(\Sigma)^n$ and $\mathbf{g}\in H^{-1/2}(\Sigma)^n$.
\end{itemize}

The following theorem establishes that Problem \eqref{eq:elast1} is well-posed. This result is derived using a variational approach similar to the one presented in \cite{DLM2023}.
\begin{thm}
There exists a unique solution $\bu \in H^1_{\partial \Omega_2}(\Omega \setminus \overline{\Sigma})^n$ to Problem~\eqref{eq:elast1}.
\end{thm}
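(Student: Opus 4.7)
The plan is to reduce the prescribed-jump problem to a no-jump variational problem via a lifting, and then invoke Korn-based coercivity together with the Fredholm alternative. First I would construct a lifting $\bw\in H^1(\Omega\setminus\overline{\Sigma})^n$, supported in a tubular neighborhood of $\Sigma$ disjoint from $\partial\Omega$, satisfying $[\bw]_\Sigma=\bff$ and with vanishing Dirichlet trace on $\partial\Omega_1$. When $\Sigma$ is closed, such a $\bw$ is produced by the standard trace-inverse applied on each side of $\Sigma$ using $\bff\in H^{1/2}(\Sigma)^n$. When $\Sigma$ is open, the Lions--Magenes space $H^{1/2}_{00}(\Sigma)^n$ is precisely the space for which the zero extension $\widetilde{\bff}\in H^{1/2}(\widetilde{\Sigma})^n$ exists; lifting $\widetilde{\bff}$ on each side of the closed auxiliary surface $\widetilde{\Sigma}$ produces a $\bw$ with zero jump across $\widetilde{\Sigma}\setminus\Sigma$ and jump $\bff$ across $\Sigma$.

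Setting $\bv:=\bu-\bw$, the unknown $\bv$ lies in the no-jump space $V:=\{\bvarphi\in H^1(\Omega)^n:\bvarphi|_{\partial\Omega_1}=\mathbf{0}\}$. Testing the Lam\'e equation against $\bvarphi\in V$ and integrating by parts on $\Omega\setminus\overline{\Sigma}$, so that the boundary term on $\partial\Omega_2$ vanishes by the Neumann condition and the two $\Sigma^\pm$-contributions collapse to $\langle\bg,\bvarphi\rangle_\Sigma$ because $\bvarphi$ carries no jump, yields the variational formulation
\begin{equation*}
a(\bv,\bvarphi)-\omega^2(\bv,\bvarphi)_{L^2(\Omega)}=\ell(\bvarphi),\qquad\forall\,\bvarphi\in V,
\end{equation*}
with $a(\bv,\bvarphi):=\int_\Omega\bigl(\lambda(\nabla\cdot\bv)(\nabla\cdot\bvarphi)+2\mu\,\nabla^s\bv:\nabla^s\bvarphi\bigr)\,d\bx$ and $\ell(\bvarphi):=\langle\bg,\bvarphi\rangle_\Sigma-a(\bw,\bvarphi)+\omega^2(\bw,\bvarphi)_{L^2(\Omega)}$. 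The duality on $\Sigma$ makes sense because $\bvarphi|_\Sigma$ lies in $H^{1/2}(\Sigma)^n$ (and in $H^{1/2}_{00}(\Sigma)^n$ when $\Sigma$ is open, thanks to the absence of jump), matching the regularity of $\bg$; and $\ell\in V^\ast$ by the lifting estimates.

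For solvability I would combine Korn's second inequality with the strong convexity \eqref{tensor2} to obtain $\|\bvarphi\|_{H^1(\Omega)}^2\lesssim a(\bvarphi,\bvarphi)$ on $V$, the $L^2$-term in Korn's inequality being absorbed thanks to the Dirichlet condition on $\partial\Omega_1$ (which has nonempty interior). Thus $a$ induces an isomorphism $A:V\to V^\ast$, while the $L^2$-pairing $(\bv,\bvarphi)_{L^2}$ is a compact bilinear form on $V\times V$ by the Rellich compact embedding $H^1(\Omega)\hookrightarrow L^2(\Omega)$. Consequently $A-\omega^2K$ is a compact perturbation of an isomorphism, and the Fredholm alternative reduces well-posedness to the uniqueness of the homogeneous problem.

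The main obstacle is this uniqueness step. With $\bff=\bg=\mathbf{0}$, the associated $\bu$ already belongs to $V$ and satisfies the mixed Dirichlet--Neumann Lam\'e eigenvalue problem with parameter $\omega^2$. Following \cite{DLM2023}, I would restrict $\omega$ to the complement of the discrete set of mixed Lam\'e eigenvalues of $\Omega$, which is the implicit genericity assumption embedded in the statement. Once the homogeneous problem has only the trivial solution, the Fredholm alternative supplies a unique $\bv\in V$, and hence a unique $\bu=\bw+\bv\in H^1_{\partial\Omega_2}(\Omega\setminus\overline{\Sigma})^n$, completing the argument.
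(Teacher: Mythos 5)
Your proposal is correct and follows essentially the same route as the paper: lift the prescribed displacement jump to reduce \eqref{eq:elast1} to a jump-free variational problem, split the bilinear form into a coercive part plus a compact $\omega^2$-mass perturbation, and conclude via the Fredholm alternative, with uniqueness of the homogeneous mixed Dirichlet--Neumann problem taken as given. The differences are minor: you construct the lifting by a localized trace extension (using $H^{1/2}_{00}(\Sigma)^n$ in the open case) and obtain coercivity of the pure elastic form from Korn's inequality plus the partial Dirichlet condition, whereas the paper lifts via an auxiliary Dirichlet Lam\'e problem associated with $\widetilde{\Sigma}$ and keeps a mass term inside its coercive form $a_1$; your explicit restriction of $\omega$ away from the mixed eigenvalues is simply a more candid rendering of the paper's bare appeal to ``the uniqueness of Problem \eqref{eq:elast1}.''
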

\begin{proof}
Since the proofs of the well-posedness of Problem \eqref{eq:elast1} for both open and closed cases of $\Sigma$ are similar, we will focus exclusively on the case where $\Sigma$ is open. We employ arguments analogous to those in the proof of Theorem 2.1 from \cite{DLM2023}, incorporating necessary modifications.

We consider the variational formulation of Problem \eqref{eq:elast1}: find $\bw\in H^1_{\partial \Omega_2}(\Omega)^n$ such that
$$
a(\bw,\mathbf{\phi})=a_1(\bw,\mathbf{\phi})+a_2(\bw,\mathbf{\phi})=\mathcal{F}(\mathbf{\phi}),\qquad \forall\,\,\overline{\mathbf{\phi}}\,\,\in H^1_{\partial \Omega_2}(\Omega \setminus \overline{\Sigma})^n.$$
Here,
\begin{align*}
a_1(\bw,\mathbf{\phi})&:=\int_{\Omega_1}(\mathcal{C}:\nabla \bw):\nabla\overline{\mathbf{\phi}}\,\mathrm d\mathrm \bx+\int_{\Omega^c_1}(\mathcal{C}:\nabla \bw):\nabla\overline{\mathbf{\phi}}\,\mathrm d\mathrm\bx+\int_{\Omega_1}\, \omega^2\,\mathbf{w}\cdot \overline{\mathbf{\varphi}}\,\mathrm d\mathbf{x} +\int_{\Omega^c_1}\, \omega^2\,\mathbf{w}\cdot \overline{\mathbf{\varphi}}\,\mathrm d \mathrm\bx,\\
 a_2(\mathbf{w},\mathbf{\phi})&:=-2\int_{\Omega_1}\, \omega^2\,\mathbf{w}\cdot \overline{\mathbf{\varphi}}\,\mathrm{d}\mathrm\bx -2\int_{\Omega^c_1}\, \omega^2\,\mathbf{w}\cdot \overline{\mathbf{\varphi}}\,\mathrm d\mathrm\bx,\\
 \mathcal{F}(\mathbf{\phi})&:=-\int_{\Gamma}\widetilde{\mathbf{g}}\cdot\overline{\mathbf{\phi}}\,\mathrm d\sigma+\int_{\partial \Omega_2}\mathcal{T}_{\nu}(\mathbf{u_{\,\widetilde{\mathbf{f}}}})\cdot \overline{\mathbf{\phi}}\,\mathrm d\sigma-\int_{\Omega^c_1}(\mathcal{C}:\nabla \mathbf{\bu_{\,\widetilde{\bff}}}):\nabla\overline{\mathbf{\phi}}\,\mathrm d\mathrm \bx-\int_{\Omega^c_1}\omega^2\,\mathbf{\bu_{\,\widetilde{\bff}}}\,\cdot\overline{\mathbf{\phi}}\,\mathrm d\mathrm \bx.
\end{align*}
In this context, the variable $\bw$ is constructed in the same way as in Theorem 2.1 from \cite{DLM2023}, and $\mathbf{\bu_{\,\widetilde{\bff}}}$ is the unique solution to the Dirichlet boundary value problem defined by:
 \begin{equation*}
\Delta^*\mathbf{\bu_{\,\widetilde{\bff}}}+\omega^2\,\mathbf{\bu_{\,\widetilde{\bff}}}=\mathbf{0} \quad\mbox{in}\quad\Omega^c_1,\quad
\mathbf{u}_{\,\widetilde{\bff}}=\widetilde{\bff} \quad\mbox{on}\quad \Gamma,\quad
\mathbf{u}_{\,\widetilde{\bff}}={\bf 0}\quad\mbox{on}\quad {\partial \Omega}.
 \end{equation*}
It is evident that $a_1(\cdot,\cdot)$  is strictly coercive and bounded. By the Lax-Milgram lemma, there exists a bounded inverse operator $\mathcal{L}:H^1(\Omega)^n\longrightarrow H^1\Omega)^n$ such that
$$
a_1(\bw,\mathbf{\phi})=\langle\mathcal{L}\bw, \mathbf{\phi}\rangle,
$$
where $\langle\cdot,\cdot\rangle$ is the inner product in $H^1(\Omega)^n$. Furthermore, it follows that there exists a compact and bounded operator $\mathcal{K}$ satisfying $$a_1(\bw,\mathbf{\phi})=\langle\mathcal{K}\bw, \mathbf{\phi}\rangle.$$
Since $\mathcal{L}$ is bounded and $\mathcal{K}$ is compact, we conclude that $\mathcal{L}-\mathcal{K}$ is a Fredholm operator of index zero. According to the Fredholm alternative theorem, Riesz representation theory, and the uniqueness of Problem~\eqref{eq:elast1}, there exists a unique solution to Problem~\eqref{eq:elast1}. Because the inverse of $\mathcal{L}-\mathcal{K}$ is bounded,  we apply the Lax-Milgram lemma to the following equation
$$\big\langle\left(\mathcal{L}-\mathcal{K}\right)\bw,\mathbf{\phi} \big\rangle=\mathcal{F}(\mathbf{\phi}),
$$
which leads to
\begin{align*}
\|\bw\|_{H_{\partial\Omega_2}^1(\Omega)^n}&\leq\Big\|\bw\big|_{\Omega_1}\Big\|_{H^1(\Omega_1)^n}+\Big\|\bw\big|_{\Omega^c_1}\Big\|_{H^1(\Omega^c_1)^n}\leq \|\mathcal{F}\|
 \leq C\big(\|\widetilde{\bff}\|_{H^{\frac{1}{2}}(\Sigma)^n}+\|\widetilde{\bg}\|_{H^{-\frac{1}{2}}(\Sigma)^n}\big)\\
 &\leq C\big(\|\bff\|_{H_{00}^{\frac{1}{2}}(\widetilde{\Sigma})^n}+\|\bg\|_{H^{-\frac{1}{2}}_0(\widetilde{\Sigma})^n}\big).
 \end{align*}

The proof is complete.
\end{proof}

\subsection{Admissible assumptions for inverse problems}
In this subsection, we focus on the prior information regarding the fault $\Sigma$, the jumps $\bff$ and $\bg$, and the elastic tensor $\mathcal{C}(\bx)$, related to the inverse problem. As outlined in the previous section, the domain $\Omega$ can be partitioned into a finite number of Lipschitz subdomains, as specified in \eqref{Part}.

For the inverse problem, specific assumptions must be made about the geometry of the elastic solid $\Omega$. In this context, the parameters $\lambda$ and $\mu$ are defined as follows:
$$\lambda = \sum_{l=1}^N \chi_{\Omega_l}\lambda_l\quad\mbox{and}\quad \mu= \sum_{l=1}^N \chi_{\Omega_l}\mu_l, $$
where $\lambda_l$ and $\mu_l$ are real constants which fulfill the strong convexity condition \eqref{tensor2} and that \begin{equation}\label{eq:lamlayer}(\lambda_l,\,\,\mu_l)\neq (\lambda_{l+1},\,\,\mu_{l+1}) \quad\mbox{for }\quad l=1,2,...,N-1.
\end{equation}

We subsequently investigate the geometric properties of a localized region near a corner in two dimensions. Consider the polar coordinates $(r,\theta)$, where $\mathbf{x}=(x_1,x_2)^t=(r\cos\theta,r\sin\theta)^t\in \mathbb{R}^2$. We define an open sector $\mathcal{A}$, with boundaries $\Gamma^+$ and $\Gamma^-$ specified as follows:
 \begin{align}
 \mathcal{A}&=\left\{ \bx\in \mathbb{R}^2\big| \,\,{\bf 0}<\bx=(r\cos\theta,r\sin\theta)^t, \,\, \theta_{\min}<\theta<\theta_{\max},r>0 \right\}\Subset \mathbb{R}^2,\label{corner}\\
\Gamma^+&=\left\{\bx\in \mathbb{R}^2\big|\,\,{\bf0}<\bx=(r\cos\theta_{\max},r\sin\theta_{\max})^t, r>0  \right\},\nonumber\\
\Gamma^-&=\left\{\bx\in \mathbb{R}^2\big|\,\,{\bf0}<\bx=(r\cos\theta_{\min},r\sin\theta_{\min})^t, r>0  \right\},\nonumber
\end{align}
where $-\pi<\theta_{\min}<\theta_{\max}<\pi$ and $\theta_{\max}-\theta_{\min}\in (0,\pi)$. Let $B_h$ denote an open disk centered at $\bf 0$ of radius $h\in \mathbb{R}_+$. Define
\begin{equation}\label{eq:ball1}
S_h:=\mathcal{A}\cap B_h,\,\, \Gamma^\pm_h:=\Gamma^\pm\cap B_h,\,\,\Lambda_h:=\mathcal{A}\cap \partial B_h,\,\,\overline{S_h}:=\overline{\mathcal{A}}\cap \overline{B}_h,\,\,\mbox{and}\,\, \Sigma_{\Lambda_h}:=S_h\backslash S_{h/2}.
\end{equation}

\begin{figure}[ht]
\centering
\subfigure{\includegraphics[width=0.45\textwidth]{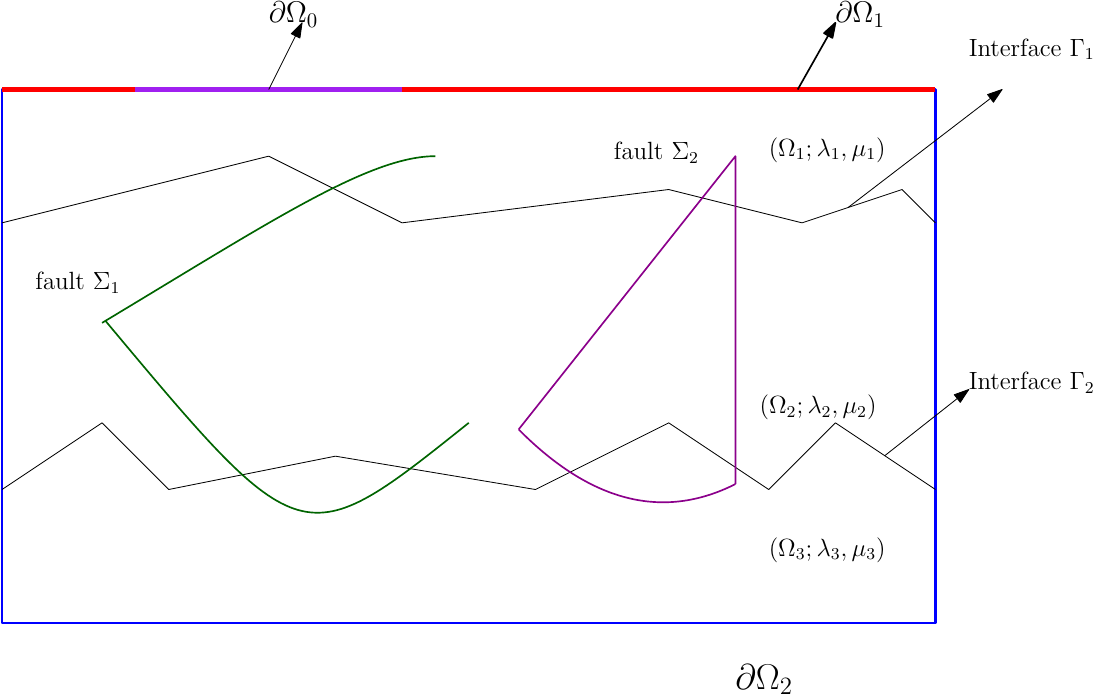}}
\subfigure{\includegraphics[width=0.45\textwidth]{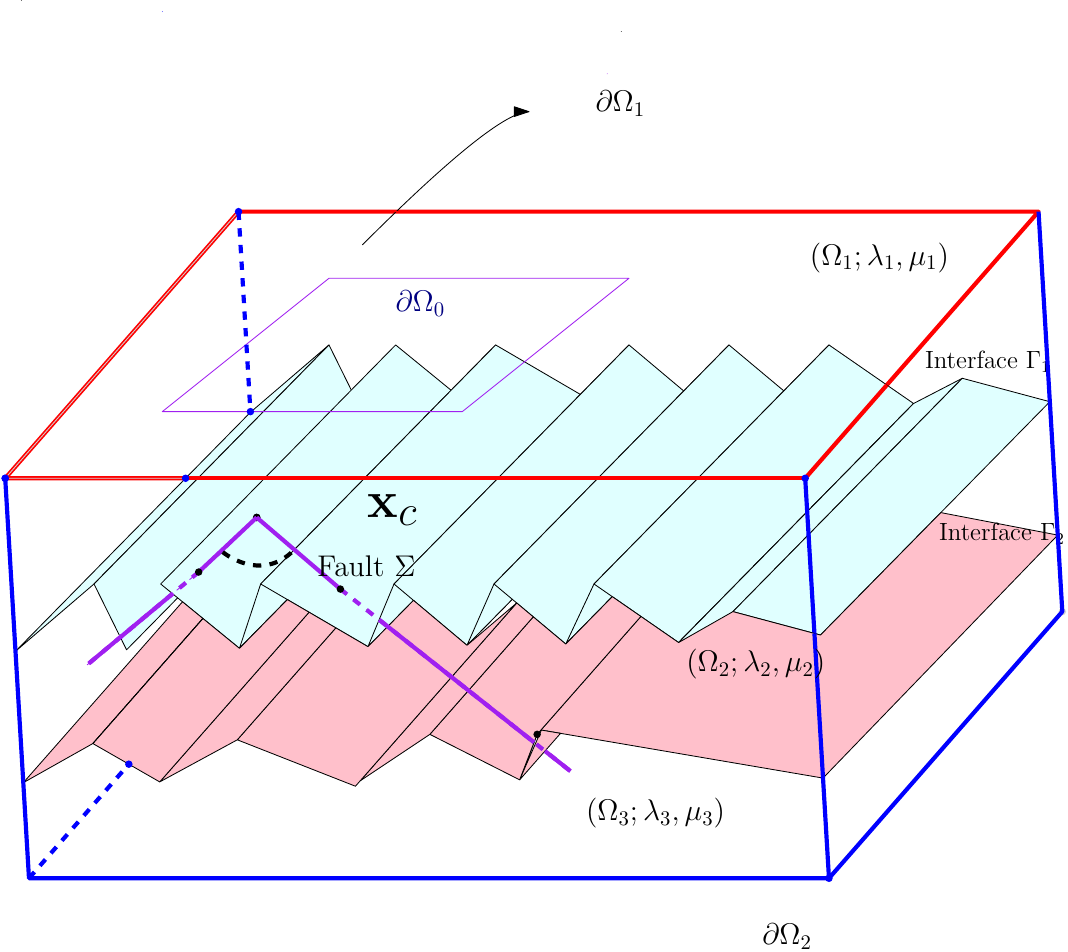}}
    \caption{Schematic illustration of some scenarios for 2D and 3D. }\label{Fig1}
\end{figure}
To recover, both locally and globally, the fault $\Sigma$ and the jumps $\mathbf{f}$ and $\mathbf{g}$ derived from a single boundary measurement from a subset $\partial\Omega_0$ of $\partial \Omega_2$, we will delineate the admissible sets $\mathcal{G}$ and $\mathcal{H}$ associated with the fault $\Sigma$, and the jumps $\mathbf{f}$ and $\mathbf{g}$, respectively.

\begin{defn}\label{def:Admis1-2}({\bf Admissible set $\mathcal{G}$})
For the bounded Lipschitz domain $\Omega = \bigcup_{l=1}^N \Omega_l$ in $\mathbb{R}^n$ (where $n=2,3$),
we say that $\Sigma$ belongs to the admissible class $\mathcal{G}$ if the following conditions are satisfied:
\begin{itemize}
\item[{\rm (i)}] In $\mathbb R^2$, $\Sigma\Subset \mathbb{R}^{2}$ is an oriented  Lipschitz curve. There exists at least one planar corner point $\bx_c$ on $\Sigma$ with the geometric properties that $\Gamma^\pm_{\bx_c,h}\Subset \Sigma$, where  $\Gamma^\pm_{\bx_c,h}=\partial \Gamma^\pm_{\bx_0}\cap B_h(\bx_c)$ and $S_{\bx_c,h}=B_h(\bx_c)\cap \mathcal{A}_{\bx_c}=B_h(\bx_c)\cap \Omega_1$, $\mathcal{A}_{\bx_c}$ and $B_h(\bx_c)$ are defined in \eqref{eq:ball1} and $\Omega_-= {\mathrm {enclose}}(\Sigma)$. Namely, if $\Sigma$ is open, then $\partial \Omega_-=\widetilde{\Sigma}=\overline{\Sigma}\cup\Sigma_0$ is a closed Lipschitz curve or surface that extends $\Sigma$ and satisfies
$\widetilde{\Sigma}\cap \partial \Omega=\emptyset$, where $\Sigma_0$ is a curve or surface connecting to the boundary $\partial\Sigma$ and ensuring that $ \Sigma_0\cap( \Sigma\backslash \partial \Sigma) =\emptyset $. If $\Sigma$ is closed, then $\partial \Omega_-=\Sigma$. Furthermore, each planar corner of $\Sigma$ must be contained within the interior of some subdomain $\Omega_l$, where $l\in\{1,2,\cdots, N\}$.
\item[{\rm (ii)}]In $\mathbb R^3$, $\Sigma\Subset \mathbb{R}^{3}$ is an oriented Lipschitz surface. We assume that $\Sigma$ possesses at least one 3D edge corner point $\bx_c=(\bx'_c,x^3_c)^t\in\mathbb{R}^3$, where $\bx'_c\in \mathbb{R}^2$ is a planar corner point. For sufficiently small positive values of $h$ and $M$, we have that
$\Gamma^\pm_{\bx'_c,h}\times (-M,M)\Subset \Sigma$ and $B'_h(\bx'_c)\times (-M,M)\cap\Omega_-=\mathcal{A}'_{\bx_c,h}\times (-M,M)$. Here, $\Gamma^\pm_{\bx'_c,h}$ denotes the two edges of a sectorial corner at $\bx'_c$, while $B'_h(\bx'_c)$  represents an open disk centered at
 $\bx'_c$ with  radius $h$, as defined in \eqref{eq:ball1}. The opening angle of the sectorial corner at $\bx'_c$ corresponds to the opening angle of the associated 3D edge corner. Furthermore, each 3D edge corner of $\Sigma$ must lie within the interior of some subdomain $\Omega_l$, where $l\in\{1,2,\cdots, N\}$.
\end{itemize}
\end{defn}

Based on Definition \ref{def:Admis1-2}, the regularity assumptions for the jumps $\bff$ and $\bg$ across the fault $\Sigma\in \mathcal{G}$ are outlined in the following definition.
\begin{defn}\label{def:Admis1-3}({\bf Admissible set $\mathcal{H}$})
We say $(\Sigma;\bff,\bg)$ belongs to the admissible class $\mathcal{H}$, if the following conditions are satisfied:
\begin{itemize}
 \item[{\rm (i)}] The fault $\Sigma$ belongs to the admissible set $\mathcal{G}$;
\item[{\rm (ii)}] In $\mathbb R^2$, for any  planar corner point $\bx_c$ on $\Sigma$, we define $\bff_{\pm}:=\bff\big|_{\Gamma^{\pm}_{\bx_c,h}}$ and $\bg_{\pm}:=\bg\big|_{\Gamma^{\pm}_{\bx_c,h}}$, where $\Gamma^{\pm}_{\bx_c,h}$ is defined in \eqref{eq:ball1}. We assume that $\bff_j \in C^{1,\alpha_j}(\Gamma^j_{\bx_c,h})^2$ and  $\bg_j\in C^{\beta_j}(\Gamma^j_{\bx_c,h})^2$, with $\alpha_j,\beta_j\in (0,1)$ and $j=+,-$.
\item[{\rm (iii)}] In $\mathbb R^3$, for any 3D edge corner point $\bx_c=(\bx'_c,x^3_c)^t$ on $\Sigma$, where $\bx'_c\in \mathbb{R}^2$ being a planar corner point, we define $\bff_{\pm}:=\bff\big|_{\Gamma'^{\pm}_{\bx'_c,h}\times (-M,M)}$ and $\bg_{\pm}=\bg\big|_{\Gamma'^{\pm}_{\bx'_c,h}\times (-M,M)}$. We assume that $\bff_j \in C^{1,\alpha_j}(\Gamma'^j_{\bx'_c,h}\times (-M,M))^3$ and  $\bg_j\in C^{\beta_j}(\Gamma'^j_{\bx'_c,h}\times (-M,M))^3$, with $\alpha_j,\beta_j\in (0,1)$ and $j=+,-$. Furthermore, $\bff_j$ and $\bg_j$ are are independent of $x_3$.
\item[{\rm(iv)}] Let  $\mathcal{V}$ denote the set of 2D corners/3D edge corners of $\Sigma$.
Either the following assumptions $\mathrm{I}$ or $\mathrm{II}$ is satisfied,
\begin{align*}
 &\qquad\quad {\it Assumption \,\,I}: \,\,\mbox{For any }\,\,\bx_c\in \mathcal{V}_0, \quad \bff_-(\bx_c)\neq\bff_+(\bx_c),\\[2mm]
&\qquad\quad  {\it Assumption \,\,II}: \mbox{If there exists a point}\, \bx_c\in \mathcal{V}\,\, \mbox{such that }  \bff_-(\bx_c)=\bff_+(\bx_c),\\[2mm]
&\quad\, \qquad\qquad \qquad \qquad \mbox{then}\,\, \nabla\bff_+(\bx_c)= \nabla\bff_-(\bx_c)=\bf 0\,\,\mbox{and}\,
 \left\{\begin{array}{ll}
	\bg_+(\bx_c)\neq \Theta_{\bx_c}\bg_-(\bx_c),\qquad\mbox{ if $n=2$},\\[2mm]
\bg^{(1,2)}_+(\bx_c)\neq \Theta_{\bx'_c}\bg^{(1,2)}_-(\bx_c),\mbox{ if $n=3$},\\[0.5mm]
\mbox{or}\quad g^3_+(\bx_c)\neq 0,\\[0.5mm]
\mbox{or} \quad g^3_-(\bx_c)\neq 0.
\end{array}\right.
\end{align*}
Here, $\bg=(\bg^{(1,2)}, g^3)^t$ in 3D, and the matrices $\Theta_{\bx_c}$ and $\Theta_{\bx'_c}$ are given by:
\beq\nonumber
\qquad\quad  \bg_\pm=(\bg^{(1,2)}_\pm,\, g^3_\pm),\,
\Theta_{\bx_c}=
		\begin{bmatrix}-\cos\theta_{\bx_c}&-\sin\theta_{\bx_c}\\
    -\sin\theta_{\bx_c} &\cos\theta_{\bx_c}
    \end{bmatrix},\,
\Theta_{\bx'_c}=\begin{bmatrix}-\cos\theta_{\bx'_c}&-\sin\theta_{\bx'_c}\\
    -\sin\theta_{\bx'_c} &\cos\theta_{\bx'_c}
    \end{bmatrix},
\eeq
where $\theta_{\bx_c}$ and $\theta_{\bx'_c}$ denote the opening angle at the 2D corner/3D edge corner  point of $\Sigma$.
\end{itemize}
\end{defn}
\begin{rem}
The admissible assumptions in (i)--(iv) are often satisfied in typical physical scenarios. Specifically, \textit{Assumption I} in (iv) is generally valid based on physical intuition when a corner exists on the fault. If \textit{Assumption I} in (iv) is violated, the jump \(\mathbf{g}\) associated with the traction field fails to exhibit continuous rotational behavior at the 2D or 3D edge corner, meaning \textit{Assumption II} in (iv) holds. This condition depends on the size of the opening angle. However, such scenarios are common and easily achievable in typical physical situations.
\end{rem}
\subsection{Main results}
This subsection presents the main uniqueness result for the inverse problem \eqref{eq:ip1}. Theorem \ref{th:main_loca} establishes a local uniqueness result for the inverse problem \eqref{eq:ip1}, which is derived from a single displacement measurement on $\partial \Omega_0$. The proof is deferred to Section \ref{proofs}.

\begin{thm}\label{th:main_loca}
Let $(\Sigma_1; \bff^1,\bg^1)$ and $(\Sigma_2; \bff^2,\bg^2)$ be elements of $\mathcal{H}$. Assume that $\mathrm{supp}(\bg^i) = \mathrm{supp}(\bff^i) = \overline{\Sigma}_i$ for $i = 1, 2$. Let $\bu_i$ denote the unique solution to Problem \eqref{eq:elast1} in $H^1_{\partial\Omega_2}(\Omega\backslash\overline{\Sigma}_i)$, with respect to $\bg = \bg^i$, $\bff = \bff^i$, and $\Sigma = \Sigma_i$, for $i = 1, 2$. Additionally, let ${\Gamma^i_1,\ldots,\Gamma^i_{N-1}} \Subset \mathcal{T}$ represent the interfaces associated with the partition of $\Omega$, defined as $\Omega = \bigcup_{l=1}^N \Omega^i_l$, corresponding to $(\Sigma_i; \bff^i,\bg^i)$ for $i = 1, 2$. If $\bu_1\big|_{\partial\Omega_0} = \bu_2\big|_{\partial\Omega_0}$, then it follows that $\Gamma^1_l \Delta \Gamma^2_l$ for $l = 1, 2, \ldots, N-1$ and $\Sigma_1 \Delta \Sigma_2 $ must not contain corners or 3D edge corners.
\end{thm}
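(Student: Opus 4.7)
The plan is to argue by contradiction. Suppose to the contrary that some corner point $\bx_c$ (a planar corner in 2D or a 3D edge point in 3D) lies in either $\Gamma^1_l \Delta \Gamma^2_l$ for some $l \in \{1,\ldots,N-1\}$, or in $\Sigma_1 \Delta \Sigma_2$. Without loss of generality $\bx_c$ lies on the boundary of the region enclosed by exactly one of the two configurations, so that in a small ball $B_h(\bx_c)$ one configuration exhibits the corner while the other is locally smooth (in the interface case), or only one of $\bu_1,\bu_2$ carries the prescribed jumps at $\bx_c$ (in the fault case). The first step is to apply unique continuation for the Lam\'e system $\Delta^*\bu+\omega^2\bu=\mathbf{0}$ in the connected open subset of $\Omega$ on which $\bu_1$ and $\bu_2$ both satisfy the same equation with the same piecewise-constant Lam\'e parameters. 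The boundary identity $\bu_1|_{\partial\Omega_0}=\bu_2|_{\partial\Omega_0}$, combined with the shared traction-free data $\mathcal{T}_\nu\bu_i|_{\partial\Omega_2}=\mathbf{0}$, supplies Cauchy data on a nonempty open subset of $\partial\Omega$, so that $\bu_1\equiv\bu_2$ throughout this connected component. This reduces the problem to a purely local analysis in a neighborhood of $\bx_c$.

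Next, set $\bw=\bu_1-\bu_2$ in a small sector $S_h=B_h(\bx_c)\cap\mathcal{A}_{\bx_c}$ as in \eqref{eq:ball1}, of opening angle strictly less than $\pi$. In the interface-corner case $\bw$ satisfies a transmission problem across a cornered interface with jump of piecewise-constant parameters $(\lambda_l,\mu_l)\neq(\lambda_{l+1},\mu_{l+1})$, while in the fault-corner case $\bw$ inherits the prescribed displacement and traction jumps $\bff,\bg$ from exactly one of the two configurations at $\bx_c$. I would then invoke the Complex Geometric Optics solutions for the homogeneous Lam\'e operator used in Section \ref{se:pre}, namely families $\bu_\tau(\bx)\sim e^{\tau(\bd+i\bd^\perp)\cdot(\bx-\bx_c)}\bh$ whose phase is chosen so that $\mathrm{Re}(\tau(\bd+i\bd^\perp)\cdot(\bx-\bx_c))<0$ on $S_h$, forcing $\bu_\tau$ to concentrate exponentially at the corner as $\tau\to\infty$. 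Testing the governing equations for $\bw$ against $\bu_\tau$ via Betti's integration-by-parts identity on $S_h$ produces a family of integral identities in $\tau$; the $\tau\to\infty$ asymptotic isolates the values of the discontinuous data at $\bx_c$ while the smooth remainder over $\Sigma_{\Lambda_h}$ is absorbed using the exponential decay of $\bu_\tau$ away from $\bx_c$.

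The asymptotic matching then yields a contradiction as follows. In the interface-corner case the leading term in $\tau$ is a nontrivial function of $(\lambda_l-\lambda_{l+1},\mu_l-\mu_{l+1})$ multiplied by a strictly positive geometric factor determined by the opening angle at $\bx_c$; condition \eqref{eq:lamlayer} forces this leading term to be nonzero, contradicting the identity. In the fault-corner case, the H\"older regularity of $\bff^i,\bg^i$ at $\bx_c$ in Definition \ref{def:Admis1-3}(ii)--(iii) makes the remainder subleading, and the principal term involves $\bff_-(\bx_c)-\bff_+(\bx_c)$, or, in the degenerate case where this vanishes together with $\nabla\bff_\pm(\bx_c)$, a combination of $\bg_\pm(\bx_c)$ with the rotation $\Theta_{\bx_c}$ (respectively $\Theta_{\bx'_c}$ in 3D); Assumption I or Assumption II of Definition \ref{def:Admis1-3}(iv) guarantees that this principal term is nonzero, again yielding a contradiction. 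The 3D edge-corner case reduces to the 2D planar-corner case by the $x_3$-independence of $\bff,\bg$ along the edge in Definition \ref{def:Admis1-3}(iii), via a standard partial Fourier (or dimension-reduction) argument. The principal obstacle is the microlocal CGO analysis at the corner: because the Lam\'e system couples P- and S-wave components, the construction of CGO solutions and the bookkeeping of their leading asymptotics must separate the singular contribution concentrated at $\bx_c$ from the regular boundary terms on $\Lambda_h$ and $\Gamma^\pm_h$, and this is precisely what the preliminary lemmas of Section \ref{se:pre} are designed to deliver; once those local results are in place, Theorem \ref{th:main_loca} follows by combining them with the unique-continuation reduction above.
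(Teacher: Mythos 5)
Your overall skeleton coincides with the paper's proof: contradiction, Cauchy data on $\partial\Omega_0$ (displacement from the measurement, traction from the common condition on $\partial\Omega_2$) plus unique continuation to reduce to a local sector problem at the putative corner, then a Betti/CGO asymptotic identity as the large parameter tends to infinity, with the fault-corner contradiction coming from Assumption I/II of Definition \ref{def:Admis1-3}(iv) exactly as delivered by Propositions \ref{prop:trans1} and \ref{prop:trans2}, and the 3D edge case handled by the $x_3$-independence through the dimension-reduction operator (Lemma \ref{lem:R3_1}). That half of your argument is faithful to the paper.

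However, your interface-corner step has a genuine gap. The leading term of the CGO identity at a corner of $\Gamma^1_l\Delta\Gamma^2_l$ is \emph{not} ``a nontrivial function of $(\lambda_l-\lambda_{l+1},\mu_l-\mu_{l+1})$ multiplied by a strictly positive geometric factor'': it is the traction mismatch of the actual field, $\bt(\bx_c)=\big(\mathcal{T}^{2}_\nu-\mathcal{T}^{1}_\nu\big)\bu_1(\bx_c)=(\lambda_2-\lambda_1)(\nabla\cdot\bu_1)\nu+2(\mu_2-\mu_1)(\nabla^s\bu_1)\nu$ evaluated at the corner, multiplied by the nonzero (complex, not positive) angular factor $\hat{\mu}^{-2}(\theta_{\max})+\hat{\mu}^{-2}(\theta_{\min})$. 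Consequently \eqref{eq:lamlayer} by itself cannot force the leading term to be nonzero: if $\nabla\bu_1$ degenerated at $\bx_c$, that term would vanish no matter how different the Lam\'e constants are. The correct logic, as in Step II of the paper's proof, runs the other way: the $s\to\infty$ limit forces $\bt(\bx_c)=\mathbf 0$, i.e.\ equality of the two tractions of the same interior field, and the contradiction is then drawn from \eqref{eq:lamlayer} applied to $\bu_1$ in the interior of $\Omega^1_1$. Two secondary deviations are worth noting. First, for this interface step the paper does not reuse the plane-wave CGO \eqref{eq:cgo1} but the solution $\widetilde{\bu}_0=e^{-s\sqrt{z}}(1,\mathrm i)^t$ of \eqref{eq:lame3}, whose volume contribution decays like $s^{-4}$ against the $s^{-2}$ boundary term; with your plane-wave CGO the field $\bu_1$ does not annihilate $\Delta^*_{\lambda_2,\mu_2}+\omega^2$ in $S_h$, so you would have to redo this bookkeeping (the volume term is only $O(\tau^{-2})$ versus a boundary term $O(\tau^{-1})$, so it can be made to work, but it is not automatic). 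Second, when the fault corner sits in a deeper layer $\Omega_k$, $k\ge 2$, your ``connected component'' continuation is not immediate, since between mismatched interfaces the two solutions satisfy equations with different coefficients; the paper's Case II first excludes corners on $\Gamma^1_1\Delta\Gamma^2_1,\dots,\Gamma^1_{k-1}\Delta\Gamma^2_{k-1}$ and only then propagates $\bu_1=\bu_2$ layer by layer down to the corner, an ordering your sketch leaves implicit.
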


In Theorems \ref{th:main1} and \ref{th:main2}, we establish global uniqueness results for the inverse problem \eqref{eq:ip1}, specifically concerning the determination of the interfaces $\Gamma_1,\cdots,\Gamma_{N-1}$ specified in \eqref{Part}, $\Sigma$, as well as the jumps $\mathbf{f}$ and $\mathbf{g}$ from a single boundary measurement on a subset $\partial\Omega_0$ of $\partial \Omega_2$. The proofs of Theorems \ref{th:main1} and \ref{th:main2} are deferred to Section \ref{proofs}. Next, we recall the concepts of piecewise polygonal curves and surfaces.

\begin{defn}\label{def:poly}\cite[Definition 3.2]{DLM2023}
Let $\Gamma\Subset \mathbb R^n$ (where $n=2,3$) be open. In the case of rigid motion, let $\Gamma\Subset \mathbb R^2$ be the graph of a function $f(x_1)$, where $x_1\in [a,b]$. If $[a,b]=\cup_{i=1}^\ell [a_i,a_{i+1}]$ with $\ell\geq 3$, $a_i<a_{i+1}$, $a_1=a$ and $a_\ell=b$, and if $f$ is a piecewise linear polynomial on each interval $[a_i,a_{i+1}]$, then $\Gamma\Subset \mathbb R^2$ is referred to as a piecewise polygonal curve. In the case of rigid motion, let $\Gamma\Subset \mathbb R^3$ be the graph of a function $f(x_1,x_3)$, where $(x_1,x_3)\in [a_1,a_2] \times [b_1,b_2]$. If $f(x_1,c)$ (with $c\in [b_1,b_2]$ fixed) satisfies $f(x_1,c)=g(x_1)$, where the graph of $g(x_1)$ is a piecewise polygonal curve as defined in $\mathbb R^2$, then $\Gamma\Subset \mathbb R^3$  is referred to as a piecewise polyhedral surface.
\end{defn}

\begin{thm}\label{th:main1}
  Let $(\Sigma_1; \bff^1,\bg^1)$ and $(\Sigma_2; \bff^2,\bg^2)$ be elements of $\mathcal{H}$, where $\Sigma_1$ and $\Sigma_2$ are closed. Assume that $\Omega^1_- = {\mathrm{enclose}}(\Sigma_1)$ and $\Omega^2_- = {\mathrm{enclose}}(\Sigma_2)$ are convex polygons in $\mathbb{R}^2$ or convex polyhedrons in $\mathbb{R}^3$. Here, $\Sigma_i = \bigcup_{k=1}^{m_i} \Pi_{i,k}$ denotes the union of $m_i$ edges or surfaces, where $\Pi_{i,k}$ represents the $k$-th edge or surface of the polygon or polyhedron $\Omega^i_-$ for $i = 1, 2$. Let $\bu_i$ denote the unique solution to Problem \eqref{eq:elast1} in $H^1_{\partial\Omega_2}(\Omega\backslash\overline{\Sigma}_i)$ with respect to $\bg^i$ and $\bff^i$, respectively, for $i = 1, 2$. Additionally, assume that the interfaces ${\Gamma^i_1,\ldots,\Gamma^i_{N-1}}$ of $\Omega$ that correspond to $(\Sigma_i; \bff^i,\bg^i)$ for $i = 1, 2$ are piecewise polygonal (See Definition \ref{def:poly}). If $\bu_1\big|_{\partial\Omega_0} = \bu_2\big|_{\partial\Omega_0}$, then it follows that $\Gamma^1_l = \Gamma^2_l$ for $l = 1, 2, \ldots, N-1$, and $\Sigma_1 = \Sigma_2$. This implies that $m := m_1 = m_2$ and $\Pi_k := \Pi_{1,k} = \Pi_{2,k}$ for $k = 1, \ldots, m$.

Furthermore, assume that $\bff^i$ and $\bg^i$ are piecewise constant functions on $\Pi_k$. In this case, we obtain

\begin{equation}\label{eq:unque1}
(\bff^1 - \bff^2)\big|_{\Pi_{k+1}} = (\bff^1 - \bff^2)\big|_{\Pi_{k}}, \quad (\bff^1 - \bff^2)\big|_{\Pi_1} = (\bff^1 - \bff^2)\big|_{\Pi_m}, \quad k = 1, \ldots, m - 1.
\end{equation}
and
\beq\label{eq:unque1'}
(\bg^1 - \bg^2)\big|_{\Pi_{k+1}} = \Theta_{\bx_k}(\bg^1 - \bg^2)\big|_{\Pi_k}, \quad (\bg^1 - \bg^2)\big|_{\Pi_1} = \Theta_{\bx_m}(\bg^1 - \bg^2)\big|_{\Pi_m},
\eeq
where $\Theta_{\bx_k} = \begin{bmatrix}-\cos\theta_{\bx_k} & -\sin\theta_{\bx_k} \\ -\sin\theta_{\bx_k} & \cos\theta_{\bx_k}\end{bmatrix}$ is defined similarly to $\Theta_{\bx_c}$ in Definition~\ref{def:Admis1-3}, with $\theta_{\bx_k}$ corresponding to the opening angle at $\bx_k$.
\end{thm}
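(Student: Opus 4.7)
My strategy is to bootstrap from the local Theorem \ref{th:main_loca} to the global geometric coincidence, and then extract the transfer identities \eqref{eq:unque1}--\eqref{eq:unque1'} by applying the corner analysis of Section \ref{se:pre} to the difference $\bw:=\bu_1-\bu_2$. For the first step, Theorem \ref{th:main_loca} tells us that the hypothesis $\bu_1|_{\partial\Omega_0}=\bu_2|_{\partial\Omega_0}$ forces each symmetric difference $\Gamma^1_l\Delta\Gamma^2_l$ and $\Sigma_1\Delta\Sigma_2$ to contain no 2D corner and no 3D edge corner. A nontrivial symmetric difference of two piecewise polygonal interfaces must, however, contain at least one such singularity---either a vertex (or edge) of one piece that fails to be a vertex (or edge) of the other, or a transversal intersection between non-parallel polygonal pieces---so $\Gamma^1_l=\Gamma^2_l$ for every $l=1,\dots,N-1$. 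The same dichotomy applies to $\Sigma_1$ and $\Sigma_2$, since each is the boundary of a convex polytope, and every vertex (or edge) is itself a corner (or 3D edge corner). Convexity then closes the argument: all vertices of $\Omega^1_-$ must lie in $\overline{\Omega^2_-}$ and vice versa, forcing $\Omega^1_-=\Omega^2_-$. Hence $\Sigma_1=\Sigma_2$, $m:=m_1=m_2$, and $\Pi_k:=\Pi_{1,k}=\Pi_{2,k}$.

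With both layers and fault coincident, $\bw$ satisfies $\Delta^*\bw+\omega^2\bw=\mathbf{0}$ on $\Omega\setminus\overline{\Sigma}$ with the homogeneous boundary conditions of \eqref{eq:elast1} and the supplementary condition $\bw|_{\partial\Omega_0}=\mathbf{0}$. Unique continuation for the Lam\'e system, applied on the open patch $\partial\Omega_0\subset\partial\Omega_2$ where both the Dirichlet and Neumann traces of $\bw$ vanish, propagates $\bw\equiv\mathbf{0}$ throughout the exterior component $\Omega\setminus\overline{\Omega_-}$ of the fault. The jump relations across $\Sigma$ then yield the interior Cauchy data
\begin{equation*}
\bw\big|_{\Sigma^-}=-(\bff^1-\bff^2),\qquad \mathcal{T}_\nu\bw\big|_{\Sigma^-}=-(\bg^1-\bg^2),
\end{equation*}
so that $\bw$ is a Lam\'e field inside $\Omega_-$ carrying piecewise-constant Cauchy data on each face $\Pi_k$.

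Finally, fix a vertex $\bx_k$ where $\Pi_k$ and $\Pi_{k+1}$ meet and regard $(\Sigma;\bff^1-\bff^2,\bg^1-\bg^2)$ as a ``difference'' admissible triple whose solution $\bw$ vanishes on the exterior side of $\Sigma$. If $(\bff^1-\bff^2)|_{\Pi_k}\neq(\bff^1-\bff^2)|_{\Pi_{k+1}}$, then Assumption I of Definition \ref{def:Admis1-3} is triggered at $\bx_k$ for this difference triple, and the CGO-based corner analysis of Section \ref{se:pre}---the same machinery that underlies Theorem \ref{th:main_loca}---produces a nontrivial singular local contribution to $\bw$ at $\bx_k$, incompatible with its identical vanishing on the exterior side of $\Sigma$. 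The resulting contradiction yields \eqref{eq:unque1}. With the displacement differences now matched across $\bx_k$ and $\nabla(\bff^1-\bff^2)\equiv\mathbf{0}$ automatic from piecewise constancy, the Assumption II branch of Definition \ref{def:Admis1-3} applies, and the specialization of the same CGO expansion forces $(\bg^1-\bg^2)|_{\Pi_{k+1}}=\Theta_{\bx_k}(\bg^1-\bg^2)|_{\Pi_k}$, which is \eqref{eq:unque1'}. The principal obstacle is exactly this last invocation: one must verify that the Section \ref{se:pre} corner analysis, originally phrased for a single admissible triple, transfers faithfully to the difference triple, and that the rotation matrix $\Theta_{\bx_k}$ emerging from the calculation coincides with the one in Definition \ref{def:Admis1-3}, which ultimately reduces to a direct computation of the Lam\'e traction operator under the reorientation of the outward unit normal by the exterior angle $\theta_{\bx_k}$ of $\Omega_-$ at $\bx_k$.
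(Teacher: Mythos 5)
Your proposal follows essentially the same route as the paper: Theorem \ref{th:main_loca} combined with the convex-polytope/piecewise-polygonal geometry gives $\Sigma_1=\Sigma_2$ and $\Gamma^1_l=\Gamma^2_l$, unique continuation from $\partial\Omega_0$ makes $\bw=\bu_1-\bu_2$ vanish outside $\Omega_-$, and the corner analysis of Section \ref{se:pre} applied at each vertex to the interior difference field with data $\bff^1-\bff^2$, $\bg^1-\bg^2$ yields \eqref{eq:unque1}--\eqref{eq:unque1'}. Your contradiction framing via Assumptions I/II of Definition \ref{def:Admis1-3} is just a repackaging of the direct conclusions \eqref{eq:local2} and \eqref{eq:local1} of Propositions \ref{prop:trans1}--\ref{prop:trans2} (applied with the exterior field identically zero), which is exactly what the paper does.
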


\begin{thm}\label{th:main2}
Let $(\Sigma_1; \bff^1,\bg^1)$ and $(\Sigma_2; \bff^2,\bg^2)$ be elements of $\mathcal{H}$, where $\Sigma_1$ and $\Sigma_2$ are open. Suppose that $\Sigma_1$ and $\Sigma_2$ and the interfaces of $\Omega$ corresponding to $(\Sigma_i; \bff^i,\bg^i)$ for  $i = 1, 2$ are piecewise polygonal (See Definition \ref{def:poly}). Let $\bu_i$ denote the unique solution to Problem \eqref{eq:elast1} in $H^1_{\partial\Omega_2}(\Omega\backslash\overline{\Sigma}_i)$ with respect to $\bg^i$, $\bff^i$, and $\Sigma_i$, where $\bff^i \in H^{\frac{1}{2}}_{00}(\Sigma_i)$ and $\bg^i \in H^{-\frac{1}{2}}_0(\Sigma_i)$, with $\mathrm{supp}(\bg^i) = \mathrm{supp}(\bff^i) = \overline{\Sigma}_i$ for $i = 1, 2$. Furthermore, let ${\Gamma^i_1,\ldots,\Gamma^i_{N-1}} \Subset \mathcal{T}$ denote the interfaces corresponding to the partition of $\Omega$, defined as $\Omega = \bigcup_{l=1}^N \Omega^i_l$, associated with $(\Sigma_i; \bff^i,\bg^i)$ for $i = 1, 2$. If

\begin{align}\label{eq:thm33}
\bu_1\big|_{\partial\Omega_0} = \bu_2\big|_{\partial\Omega_0},
\end{align}
then
\beq\nonumber
\Gamma^1_l = \Gamma^2_l, \quad \Sigma_1 = \Sigma_2, \quad \bff^1 = \bff^2, \quad \mbox{and} \quad \bg^1 = \bg^2 \quad \mbox{for} \quad l = 1, 2, \ldots, N - 1.
\eeq
\end{thm}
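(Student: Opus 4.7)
The plan is to bootstrap Theorem~\ref{th:main_loca} together with the piecewise polygonal a priori information to upgrade the symmetric-difference statement into full geometric equality, and then exploit unique continuation for the Lamé system to extract equality of the jumps. Throughout, let $G$ denote the unbounded component of $\Omega\setminus(\overline{\Sigma}_1\cup\overline{\Sigma}_2\cup\bigcup_{l,i}\Gamma^i_l)$ that touches $\partial\Omega_0$.

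First, I would apply Theorem~\ref{th:main_loca} directly: under the hypothesis \eqref{eq:thm33}, neither $\Gamma^1_l\Delta\Gamma^2_l$ (for any $l=1,\dots,N-1$) nor $\Sigma_1\Delta\Sigma_2$ can contain a 2D corner or a 3D edge corner. I then turn this topological obstruction into equality by exploiting that every object in sight is piecewise polygonal. For the interfaces, I would induct from the outside in: $\Gamma^1_1$ and $\Gamma^2_1$ are the interfaces closest to $\partial\Omega_0$, and if they were not equal as subsets of $\Omega$, then the symmetric difference would be a nonempty piecewise polygonal set in $\mathcal{T}$; by Definition~\ref{def:poly} each connected component would necessarily contain at least one vertex (a 2D corner, or a 3D edge corner obtained by taking the Cartesian product of a planar corner with the transverse interval), contradicting Theorem~\ref{th:main_loca}. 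Hence $\Gamma^1_1=\Gamma^2_1$; proceeding to $\Gamma^1_2=\Gamma^2_2$, and so on, all interfaces coincide. The same corner-counting argument applied to $\Sigma_1\Delta\Sigma_2$ — now using that each pair of adjacent polygonal pieces of $\Sigma_i$ meet at a genuine corner (by Definition~\ref{def:Admis1-2}(i)--(ii)) and that the endpoints $\partial\Sigma_i$, once one fault extends beyond the other, produce a corner of the symmetric difference interior to some $\Omega_l$ — forces $\Sigma_1=\Sigma_2$. Write $\Sigma:=\Sigma_1=\Sigma_2$ and $\Gamma_l:=\Gamma^1_l=\Gamma^2_l$ from this point on.

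Next, I would recover the jumps. Set $\bv:=\bu_1-\bu_2$. Since the Lamé parameters are unambiguously defined once the interfaces are known, $\bv$ satisfies $\Delta^*\bv+\omega^2\bv=\mathbf{0}$ in $\Omega\setminus\overline{\Sigma}$, together with $\bv|_{\partial\Omega_1}=\mathbf{0}$, $\mathcal{T}_\nu\bv|_{\partial\Omega_2}=\mathbf{0}$, and by \eqref{eq:thm33} also $\bv|_{\partial\Omega_0}=\mathbf{0}$. Inside the outermost layer $\Omega_1$ (where the Lamé coefficients are constant), standard Holmgren/unique continuation for the Lamé system yields $\bv\equiv\mathbf{0}$ in the connected component of $\Omega_1\setminus\overline{\Sigma}$ that reaches $\partial\Omega_0$. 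Crossing each interface $\Gamma_l$ is legitimate because both $\bu_1$ and $\bu_2$ satisfy the same transmission conditions (continuity of displacement and traction) across the now-common $\Gamma_l$, so $\bv$ has no jumps across any $\Gamma_l$ and the classical unique continuation principle propagates $\bv=\mathbf{0}$ through each layer. Iterating layer by layer, $\bv\equiv\mathbf{0}$ in $\Omega\setminus\overline{\Sigma}$. Taking non-tangential limits to $\Sigma^\pm$ then gives
\begin{equation*}
\bff^1-\bff^2=[\bv]_{\Sigma}=\mathbf{0},\qquad \bg^1-\bg^2=[\mathcal{T}_\nu\bv]_{\Sigma}=\mathbf{0},
\end{equation*}
completing the proof.

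The main obstacle I anticipate is the geometric step for the open fault: unlike the closed case of Theorem~\ref{th:main1}, one must rule out the pathology where $\Sigma_1$ and $\Sigma_2$ share a polygonal piece but one extends beyond the other along the same line, producing a symmetric difference without an obvious interior joint corner. This is handled by observing that the endpoint of the shorter fault then becomes an isolated terminus sitting in the interior of the longer fault, and combined with the $H^{1/2}_{00}$ vanishing of the jumps at $\partial\Sigma$ together with Definition~\ref{def:Admis1-3}(iv), this endpoint either produces a genuine corner of $\Sigma_1\Delta\Sigma_2$ in some $\Omega_l$ or else forces the jumps to match trivially there; either way Theorem~\ref{th:main_loca} (or the subsequent unique-continuation step) delivers the contradiction. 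A second delicate point is ensuring the unique-continuation chain reaches every connected component of $\Omega\setminus\overline{\Sigma}$, which relies on the a priori assumption that $\partial\Omega_0\Subset\partial\Omega_1\cap\partial\Omega$ and that the layered decomposition in \eqref{Part} is nested consecutively.
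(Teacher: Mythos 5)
Your proposal follows essentially the same route as the paper's proof: Theorem~\ref{th:main_loca} combined with the piecewise-polygonal a priori assumption rules out any planar or 3D edge corner in $\Sigma_1\Delta\Sigma_2$ and in $\Gamma^1_l\Delta\Gamma^2_l$, forcing $\Sigma_1=\Sigma_2$ and $\Gamma^1_l=\Gamma^2_l$, after which $\bw=\bu_1-\bu_2$ has vanishing Cauchy data on $\partial\Omega_0$ (displacement from \eqref{eq:thm33}, traction from the boundary condition on $\partial\Omega_2$) and unique continuation yields $\bw=\mathbf{0}$ in $\Omega\setminus\overline{\Sigma}$, hence $\bff^1=\bff^2$ and $\bg^1=\bg^2$. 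Your additional elaborations (layer-by-layer continuation across the common interfaces, and the collinear-extension pathology for open faults) only spell out points the paper itself treats in a single line, so the two arguments coincide in substance.
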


\section{Auxiliary results}\label{se:pre}
\subsection{Preliminaries on CGO solutions}
In this subsection, we derive several auxiliary propositions that are essential for proving our main results in Theorems \ref{th:main_loca} through \ref{th:main2}. To accomplish this, we employ two types of complex geometrical optics (CGO) solutions. First, we introduce the CGO solution $\mathbf{u}_0$ (cf. \cite{Haher1998}), which has the following form and properties:
\begin{equation}\label{eq:cgo1}
\bu_0(\bx)=e^{\bxi\cdot(\bx-\bx_c)}\eeta,
\end{equation}
where
\begin{align}
&\kappa_{\mathrm s}:=\omega\sqrt{1/\mu},  \,\quad\qquad\qquad \kappa_{\mathrm p}:=\omega\sqrt{1/(\lambda+2\mu)}\nonumber,\\
&\bxi=\tau\bd+\mathrm{i}\sqrt{\kappa^2_\mathrm s+\tau^2}\,\bd^\perp,\quad \eeta=\bd^\perp-\mathrm{i}\sqrt{1+\kappa^2_\mathrm s/\tau^2}\,\bd,\nonumber\\
 &\tau>\kappa_\mathrm s,\qquad \bd\cdot \bd^\perp=0,\,\,\quad \bd^\perp,\, \bd\in \mathbb{S}^1,\label{eq:cgo2'}\\
 &\bd\cdot(\bx-\bx_c)\leq\,\zeta_0<0\quad \,\mbox{for}\quad \bx\in \mathcal{A}.\nonumber
\end{align}
Here, $\omega\in\mathbb{R}_+$ signifies the angular frequency of the time-harmonic elastic displacement. write $\bd=(d_1,d_2)^t$, $\bd^\perp=(d_1^\perp,d_2^\perp)^t$, $\bxi=(\xi_1,\xi_2)^t$ and $\eeta=(\eta_1,\eta_2)^t$.
By performing a series of calculations, we can verify that
\begin{align}
	&\mathcal{T}_{\nu}\bu_0= 2\mu\eeta \,e^{\bxi\cdot (\bx-\bx_c)}(\bxi\cdot \nu)+\frac{\mu\kappa^2_{\mathrm s}}{\tau}e^{\bxi\cdot(\bx-\bx_c)}(d_1\,d_2^\perp-d_2\,d_1^\perp)\nu^\perp,\label{eq:T_cgo2}\\
	&\Delta^* \,\bu_0+ \,\omega^2 \,\bu_0={\bf 0} \quad\mbox{for }\quad \bx\in \mathbb{R}^2.\label{eq:cgo2_pro1}
\end{align}
It is direct to get
\begin{align*}
&|\xi|=\sqrt{2\tau^2+\kappa^2_\mathrm s}\qquad\mbox{and}\qquad|\eta|=\sqrt{2+\kappa^2_\mathrm s/\tau^2}\leq\sqrt{3}.
\end{align*}

Several important asymptotic estimates for the final CGO solution need review. The next lemma presents asymptotic estimates for $\Re \boldsymbol{\xi}$ related to the volume integral associated with the CGO solution $\mathbf{u}_0$ near the corner point $\mathbf{x}c$ of $S_{\mathbf{x}_c,h}$, which are key components for deriving Proposition~\ref{prop:trans1}.

\begin{lem}
Let $S_{\bx_c,h}$ be defined as in \eqref{eq:ball1}. Consider the CGO solution $\bu_0$ defined in \eqref{eq:cgo1}. We have the following estimates:

\beq\label{cgo2-1}
\Big| \int_{ S_{\bx_c,h}} \bu_0\,\mathrm{dx}\Big| \leq \sqrt{3}\int_{S_{\bx_c,h}} \Big|e^{\bxi\cdot (\bx-\bx_c)}\Big|\,\mathrm{dx}\leq C'_1|\Re\bxi|^{-2}
\eeq
and
\beq
\nonumber
\Big|  \int_{S_{\bx_c,h}} e^{\bxi\cdot (\bx-\bx_c)}\,\bff\cdot \bg\,\mathrm{dx}\Big|\leq C'_2|\Re\bxi|^{-B-2/p}\,\|\bg\|_{L^q(S_{\bx_c,h})^2}
\eeq
as $|\Re \bxi| \rightarrow +\infty$, where $\bff$ and $\bg$ are vector-type functions satisfying $|\bff|\leq A|\bx-\bx_c|^B$ with $A$ and $B$ as positive constants, and $\bg\in L^q(S_{\bx_c,h})^2$ with $1/p+1/q=1$.
\end{lem}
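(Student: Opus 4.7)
The plan is to reduce both estimates to one-dimensional exponentially-decaying integrals by introducing polar coordinates centered at $\bx_c$ and then invoking the sign condition $\bd\cdot(\bx-\bx_c)\leq\zeta_0<0$ from \eqref{eq:cgo2'}. The first inequality in \eqref{cgo2-1} is immediate from $|\bu_0|=|e^{\bxi\cdot(\bx-\bx_c)}|\,|\eeta|$ together with the bound $|\eeta|\leq\sqrt{3}$ recorded just after \eqref{eq:cgo2_pro1}, so the real content is the upper bound on $\int_{S_{\bx_c,h}}|e^{\bxi\cdot(\bx-\bx_c)}|\,d\bx$ and the second displayed estimate.

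First I would write $|e^{\bxi\cdot(\bx-\bx_c)}|=e^{\Re\bxi\cdot(\bx-\bx_c)}$ and recall from \eqref{eq:cgo2'} that $\Re\bxi=\tau\bd$ with $|\Re\bxi|=\tau$. Choosing $\bd=(\cos\theta_d,\sin\theta_d)^t$ and writing $\bx-\bx_c=r(\cos\theta,\sin\theta)^t$, the exponent becomes $\tau r\cos(\theta-\theta_d)$, and the assumption $\bd\cdot(\bx-\bx_c)\leq\zeta_0<0$ on the sector translates into the existence of a constant $c_0>0$ with $-\cos(\theta-\theta_d)\geq c_0$ uniformly for $\theta\in[\theta_{\min},\theta_{\max}]$. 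Hence
\begin{equation*}
\int_{S_{\bx_c,h}}e^{\Re\bxi\cdot(\bx-\bx_c)}\,d\bx
=\int_{\theta_{\min}}^{\theta_{\max}}\!\!\int_{0}^{h}e^{-\tau r\,c(\theta)}\,r\,dr\,d\theta,
\qquad c(\theta):=-\cos(\theta-\theta_d)\geq c_0.
\end{equation*}
Extending the inner radial integral to $(0,\infty)$ and using $\int_0^{\infty}e^{-\tau c(\theta)r}r\,dr=(\tau c(\theta))^{-2}$, I obtain the bound $C\tau^{-2}$, which is exactly $C'_1|\Re\bxi|^{-2}$.

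For the second estimate I would first apply Hölder's inequality with exponents $p,q$ to peel off $\|\bg\|_{L^q(S_{\bx_c,h})^2}$, then use the pointwise bound $|\bff|\leq A|\bx-\bx_c|^B$ to reduce matters to estimating
\begin{equation*}
\Bigl(\int_{S_{\bx_c,h}}e^{p\,\Re\bxi\cdot(\bx-\bx_c)}|\bx-\bx_c|^{pB}\,d\bx\Bigr)^{\!1/p}.
\end{equation*}
Repeating the polar-coordinate computation from the previous paragraph yields a radial integral of the form $\int_0^{h}e^{-p\tau c(\theta)r}r^{pB+1}\,dr$, which after extending to $(0,\infty)$ equals $\Gamma(pB+2)\bigl(p\tau c(\theta)\bigr)^{-(pB+2)}$. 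Integrating in $\theta$ against the uniform lower bound $c(\theta)\geq c_0$ and taking the $p$-th root gives the desired factor $\tau^{-B-2/p}=|\Re\bxi|^{-B-2/p}$.

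The only potentially delicate point is the uniform lower bound $c(\theta)\geq c_0>0$, since if $\cos(\theta-\theta_d)$ were to vanish at an endpoint of $[\theta_{\min},\theta_{\max}]$ the radial integrals would develop non-uniform behaviour in $\theta$. However, this is precisely ruled out by the strict negativity condition $\bd\cdot(\bx-\bx_c)\leq\zeta_0<0$ built into the CGO construction \eqref{eq:cgo2'}, together with the restriction $\theta_{\max}-\theta_{\min}\in(0,\pi)$ on the opening angle; both are already imposed on $\mathcal{A}$. With this observation the constants $C'_1$ and $C'_2$ are explicit functions of $c_0$, the opening angle, $A$, $B$, and (for $C'_2$) the gamma factor $\Gamma(pB+2)^{1/p}$, independent of $\tau$, which concludes the proof.
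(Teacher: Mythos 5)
Your proof is correct. Note that the paper itself does not prove this lemma: it is stated as a review of known asymptotic estimates for the CGO solution \eqref{eq:cgo1} (cf.\ the analogous estimates in \cite{DLM2023,Haher1998}), so there is no in-paper argument to match against. Your route --- bounding $|\eeta|\le\sqrt{3}$, passing to $e^{\Re\bxi\cdot(\bx-\bx_c)}$ with $\Re\bxi=\tau\bd$, switching to polar coordinates, extending the radial integral to $(0,\infty)$, and for the second bound applying H\"older with exponents $p,q$ before the same radial computation with $\int_0^\infty r^{pB+1}e^{-ar}\,\mathrm{d}r=\Gamma(pB+2)a^{-pB-2}$ --- is exactly the standard derivation, and it is consistent with the paper's own toolkit, which handles such radial integrals via Lemma~\ref{lem:esti3}; your extension to $(0,\infty)$ simply replaces that asymptotic identity by a one-line inequality, which is legitimate since the integrand is nonnegative. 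Two small remarks: the hypothesis $\bd\cdot(\bx-\bx_c)\le\zeta_0<0$ cannot hold literally at the vertex, and you correctly read it (as the paper itself does in the proof of Proposition~\ref{prop:trans1}) as a uniform bound on unit directions in the sector, i.e.\ your $c(\theta)\ge c_0=|\zeta_0|$; and your Gamma-function computation tacitly assumes $p<\infty$, so if one wants the endpoint case $p=\infty$, $q=1$ one should instead bound $\sup_{0<r<h}r^{B}e^{-\tau c_0 r}=\Oh(\tau^{-B})$, which gives the stated exponent $-B-2/p=-B$ there as well.
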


We shall review the next lemma used for deriving curtail auxiliary propositions~\ref {prop:trans1}.
\begin{lem}\label{lem:esti3} For any given constants $\alpha>0$, $h\in(0,e)$, and $\zeta \in \mathcal{C}$ satisfying $\Re \zeta>0$. As $\Re \zeta$ goes to $\infty$, we have the following estimates
\begin{align*}
&\Big|\int^{+\infty}_{h}  r^\alpha e^{-\zeta r}     \Big|\leq \frac{2}{\Re \zeta } e^{-h \Re \zeta/2},\\
&\int_0^h r^{\alpha}e^{-\zeta r}\mathrm{d}\mathrm{r}=\frac{\Gamma(\alpha+1)}{\zeta^{\alpha+1}}+\Oh(\frac{2}{\Re \zeta}e^{-\frac{h}{2}\,\Re \zeta}),
\end{align*}
where $\Gamma(\cdot)$ is the Gamma function.
\end{lem}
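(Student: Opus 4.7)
The plan is to prove the two estimates in sequence, with the first feeding directly into the second. For the tail estimate, I would start from the pointwise identity $|r^\alpha e^{-\zeta r}| = r^\alpha e^{-r \Re\zeta}$ and split the exponential symmetrically as $e^{-r\Re\zeta} = e^{-r\Re\zeta/2}\cdot e^{-r\Re\zeta/2}$. Setting $g(r):=r^\alpha e^{-r\Re\zeta/2}$, a direct computation gives $g'(r) = r^{\alpha-1}e^{-r\Re\zeta/2}(\alpha - r\Re\zeta/2)$, so $g$ attains its maximum at $r = 2\alpha/\Re\zeta$ and is monotonically decreasing on $[h,\infty)$ as soon as $\Re\zeta > 2\alpha/h$. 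In that regime $g(r) \leq g(h) = h^\alpha e^{-h\Re\zeta/2} \leq e^\alpha e^{-h\Re\zeta/2}$ since $h\in(0,e)$, and pulling this factor outside the integral gives
\begin{equation*}
\Big|\int_h^{+\infty} r^\alpha e^{-\zeta r}\,\mathrm{d}r\Big| \leq e^\alpha e^{-h\Re\zeta/2} \int_h^{+\infty} e^{-r\Re\zeta/2}\,\mathrm{d}r \leq \frac{2e^\alpha}{\Re\zeta}\, e^{-h\Re\zeta}.
\end{equation*}
For $\Re\zeta$ sufficiently large the factor $e^\alpha e^{-h\Re\zeta/2}$ is at most one, so the right-hand side is dominated by $\frac{2}{\Re\zeta}\,e^{-h\Re\zeta/2}$, completing the first estimate.

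For the interior integral, I would combine the tail bound just proven with the classical identity
\begin{equation*}
\int_0^{+\infty} r^\alpha e^{-\zeta r}\,\mathrm{d}r = \frac{\Gamma(\alpha+1)}{\zeta^{\alpha+1}}, \qquad \Re\zeta > 0.
\end{equation*}
This identity is the definition of the Gamma function for positive real $\zeta$ and extends to the right half plane $\Re\zeta>0$ by analytic continuation, since both sides are holomorphic there (the integral converges absolutely and uniformly on compact subsets of $\{\Re\zeta>0\}$ by Dominated Convergence). Subtracting the tail produces
\begin{equation*}
\int_0^h r^\alpha e^{-\zeta r}\,\mathrm{d}r = \frac{\Gamma(\alpha+1)}{\zeta^{\alpha+1}} - \int_h^{+\infty} r^\alpha e^{-\zeta r}\,\mathrm{d}r,
\end{equation*}
and the tail bound shows the correction term is $\Oh\!\big(\frac{2}{\Re\zeta}\,e^{-h\Re\zeta/2}\big)$ as $\Re\zeta \to +\infty$, which matches the claimed expansion.

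Both arguments are elementary and no serious obstacle arises. The only points requiring a little care are (i) verifying that the maximum of $g(r)=r^\alpha e^{-r\Re\zeta/2}$ sits to the left of $h$ once $\Re\zeta$ is large enough (needed to obtain monotonicity on $[h,\infty)$ and hence the clean pointwise bound), and (ii) justifying the Gamma-function identity for complex $\zeta$ via the analyticity argument outlined above. Both are standard and dispatched in a few lines, and the hypothesis $h\in(0,e)$ enters only through the bound $h^\alpha \leq e^\alpha$, which is absorbed in the asymptotic regime $\Re\zeta\to +\infty$.
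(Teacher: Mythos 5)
Your proposal is correct. Note that the paper itself states this lemma without proof, presenting it as a review of a standard auxiliary estimate, so there is no in-paper argument to compare against; your argument is precisely the standard one that is being taken for granted: peel off half of the exponential factor, use that $r^{\alpha}e^{-r\Re\zeta/2}$ is decreasing on $[h,\infty)$ once $\Re\zeta\geq 2\alpha/h$ to get the tail bound $\frac{2e^{\alpha}}{\Re\zeta}e^{-h\Re\zeta}\leq\frac{2}{\Re\zeta}e^{-h\Re\zeta/2}$ in the asymptotic regime, and then obtain the second expansion by subtracting this tail from the full Laplace integral $\int_0^{\infty}r^{\alpha}e^{-\zeta r}\,\mathrm{d}r=\Gamma(\alpha+1)\zeta^{-(\alpha+1)}$, justified for complex $\zeta$ with $\Re\zeta>0$ (principal branch) by analytic continuation. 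Both of the points you flag as needing care (location of the maximum of $r^{\alpha}e^{-r\Re\zeta/2}$ relative to $h$, and the validity of the Gamma identity off the real axis) are handled adequately, and the hypothesis $h\in(0,e)$ indeed enters only through $h^{\alpha}\leq e^{\alpha}$, which is absorbed as $\Re\zeta\to+\infty$.
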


\subsection{Local uniqueness results for 2D case}
To prove Theorems~\ref{th:main1} and \ref{th:main2} in $\mathbb{R}^2$, we require a crucial auxiliary proposition. Since $\Delta^*$ is invariant under rigid motions, we can assume, without loss of generality, that the corner point $\mathbf{x}_c$ is located at the origin.
\begin{prop}\label{prop:trans1}
Let $S_h$ and $\Gamma^\pm_h$ be defined in \eqref{eq:ball1}. Let $\bv\in H^1(S_h)^2$ and $\bw\in H^1(S_h)^2$ such that
\begin{equation*}
\begin{cases}
\Delta^*\,\bv+\omega^2\,\bv=\mathbf{0},\,\,\Delta^*\,\bw+\omega^2\,\bw=\mathbf{0}  &\mbox{in}\quad S_h,\\
  \mathcal{T}_{\nu}\,\bw-\mathcal{T}_{\nu}\,\bv=\bg_1,\quad \,\,\bw-\bv=\bff_1 &\mbox{on}\quad \Gamma^+_h,\\
\mathcal{T}_{\nu}\,\bw-\mathcal{T}_{\nu}\,\bv=\bg_2,\quad\,\, \bw-\bv=\bff_2 &\mbox{on}\quad \Gamma^-_h,\\
\end{cases}
\end{equation*}
 with $\nu\in \mathbb{S}^1$ denoting the exterior unit normal vector to $\Gamma^\pm_h$, $\bff_1\in H^{\frac{1}{2}}(\Gamma_h^+)^2\cap C^{1,\alpha_1}(\Gamma_h^+)^2$, $\bff_2\in H^{\frac{1}{2}}(\Gamma_h^-)^2\cap C^{1,\alpha_2}(\Gamma_h)^2$, $\bg_1\in H^{-\frac{1}{2}}(\Gamma_h^+)^2\cap C^{\beta_1}(\Gamma_h^+)^2 $ and $\bg_2\in H^{-\frac{1}{2}}(\Gamma_h^-)^2\cap C^{\beta_2}(\Gamma_h^-)^2 $, where $\alpha_1,\alpha_2,\beta_1 $ and $\beta_2 $ are in $(0,1)$.
 Recall that the CGO solution $\bu_0(\bx)$ is defined in \eqref{eq:cgo1} with an asymptotic parameter $\tau\in \mathbb{R}_+$. Then, we have
\begin{align}
 &\Re\bg_1({\bf 0})\cdot\int_{\Gamma^+_h}\bu_0\,\mathrm d\sigma+\bg_2({\bf 0})\cdot\int_{\Gamma^-_h}\bu_0\,\mathrm d\sigma-\bff_1({\bf 0})\cdot\int_{\Gamma^+_h}\mathcal{T}_{\nu}\,\bu_0\,\mathrm d\sigma-\bff_2({\bf 0})\cdot\int_{\Gamma^-_h}\mathcal{T}_{\nu}\,\bu_0\,\mathrm d\sigma\nonumber\\
&=\int_{\Gamma^+_h}\mathcal{T}_{\nu}\bu_0\cdot\delta\bff_1(\bx)\,\mathrm d\sigma-\int_{\Gamma^+_h}\delta\bg_1\cdot\bu_0\,\mathrm d\sigma+\int_{\Gamma^-_h}\mathcal{T}_{\nu}\bu_0\cdot\delta\bff_2(\bx)\,\mathrm d\sigma-\int_{\Gamma^-_h}\delta\bg_2\cdot\bu_0\,\mathrm d\sigma\nonumber\\
&\,\,\,\,+\int_{\Lambda_h}\mathcal{T}_{\nu}\big(\bv-\bw\big)\cdot\bu_0-\mathcal{T}_{\nu}\bu_0\cdot\big(\bv-\bw\big)\,\mathrm d\sigma\label{eq:inter0}.
 \end{align}
 and
 \beq\label{eq:local2}
 \bff_1({\bf 0})\big|_{\Gamma^+_h}=\bff_2({\bf 0})\big|_{\Gamma^-_h}.
 \eeq
 Furthermore, if the condition that $\nabla \bff_1({\bf 0})=\nabla \bff_2({\bf 0})={\bf 0}$ holds, we obtain
 \begin{align}\label{eq:local1}
\bg_1({\bf 0})\big|_{\Gamma^+_h}&=\Theta\,\bg_2({\bf 0})\big|_{\Gamma^-_h},
\end{align}
where $\Theta=\begin{bmatrix}-\cos(\theta_{\max}-\theta_{\min}),&-\sin(\theta_{\max}-\theta_{\min})\\
    -\sin(\theta_{\max}-\theta_{\min}), &+\cos(\theta_{\max}-\theta_{\min})
    \end{bmatrix}.$
\end{prop}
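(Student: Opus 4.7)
The plan is to combine the second Betti identity with an asymptotic analysis using the CGO solution $\bu_0$ as $\tau\to+\infty$. For brevity I write $\Gamma^\pm_h,\ S_h,\ \Lambda_h$ as in \eqref{eq:ball1} and abbreviate by assuming, as the statement allows, $\bx_c=\mathbf 0$.

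\textbf{Step 1: Derivation of \eqref{eq:inter0}.}
Because $\bu_0$, $\bv$, and $\bw$ all satisfy $\Delta^*\cdot+\omega^2\cdot=\mathbf 0$ on $S_h$ (by \eqref{eq:cgo2_pro1} and the hypothesis), Betti's second identity on $S_h$ gives
\[
\int_{\partial S_h}\bigl[\mathcal{T}_\nu(\bv-\bw)\cdot\bu_0-\mathcal{T}_\nu\bu_0\cdot(\bv-\bw)\bigr]\,\mathrm d\sigma=0.
\]
I would split $\partial S_h=\Gamma^+_h\cup\Gamma^-_h\cup\Lambda_h$, substitute the prescribed jumps $\bw-\bv=\bff_j$ and $\mathcal T_\nu(\bw-\bv)=\bg_j$ on $\Gamma^\pm_h$, then write $\bff_j(\bx)=\bff_j(\mathbf 0)+\delta\bff_j(\bx)$ and $\bg_j(\bx)=\bg_j(\mathbf 0)+\delta\bg_j(\bx)$, collecting constant parts on the left and the remainders together with the arc integral on the right. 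This rearrangement is exactly \eqref{eq:inter0}.

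\textbf{Step 2: Asymptotic framework.}
Setting $\tilde{\bxi}:=\bxi/\tau$, so that $\tilde{\bxi}\to\bd+\mathrm i\bd^\perp$ and $\eeta\to\bd^\perp-\mathrm i\bd$ as $\tau\to+\infty$, and parametrizing each ray $\Gamma^\pm_h$ by arclength $r\in(0,h)$ with tangent $\hat e^\pm$ and outward normal $\nu^\pm$, Lemma~\ref{lem:esti3} applied to $\int_0^h e^{r\bxi\cdot\hat e^\pm}\mathrm dr$ yields
\[
\int_{\Gamma^\pm_h}\bu_0\,\mathrm d\sigma=-\frac{\eeta}{\tau\,\tilde{\bxi}\cdot\hat e^\pm}+\mathcal O(e^{-c\tau}),\qquad \int_{\Gamma^\pm_h}\mathcal T_\nu\bu_0\,\mathrm d\sigma=-\frac{2\mu\,\eeta\,(\tilde{\bxi}\cdot\nu^\pm)}{\tilde{\bxi}\cdot\hat e^\pm}+\mathcal O(\tau^{-1}),
\]
using \eqref{eq:T_cgo2} for the traction. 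The $\Lambda_h$ term in \eqref{eq:inter0} is exponentially small because the sign constraint $\bd\cdot(\bx-\bx_c)\le\zeta_0<0$ on $\mathcal A$ forces $|\bu_0|+\tau^{-1}|\mathcal T_\nu\bu_0|\lesssim e^{\zeta_0\tau/2}$ on $\Lambda_h$. The remainder integrals on the right of \eqref{eq:inter0} are controlled by the Hölder bounds $|\delta\bff_j(\bx)|\le C|\bx|$ (Lipschitz, from $\bff_j\in C^{1,\alpha_j}$) and $|\delta\bg_j(\bx)|\le C|\bx|^{\beta_j}$; the one-dimensional version of the estimate underlying \eqref{cgo2-1} then gives $\int_{\Gamma^\pm_h}\mathcal T_\nu\bu_0\cdot\delta\bff_j\,\mathrm d\sigma=\mathcal O(\tau^{-1})$ and $\int_{\Gamma^\pm_h}\delta\bg_j\cdot\bu_0\,\mathrm d\sigma=\mathcal O(\tau^{-1-\beta_j})$, both $o(1)$.

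\textbf{Step 3: Extraction of \eqref{eq:local2} and \eqref{eq:local1}.}
Collecting the $\mathcal O(1)$ terms in \eqref{eq:inter0} (the $\bg_j(\mathbf 0)$ contributions are $\mathcal O(\tau^{-1})$) produces the homogeneous relation
\[
\bff_1(\mathbf 0)\cdot\mathbf A^+(\bd)+\bff_2(\mathbf 0)\cdot\mathbf A^-(\bd)=0,\qquad \mathbf A^\pm(\bd):=\frac{2\mu\,(\bd^\perp-\mathrm i\bd)\,(\tilde{\bxi}\cdot\nu^\pm)}{\tilde{\bxi}\cdot\hat e^\pm},
\]
valid for every admissible $\bd\in\mathbb S^1$. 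Varying $\bd$ over a nonempty open arc of admissible directions (those compatible with $\bd\cdot(\bx-\bx_c)<0$ on $\mathcal A$) and taking real and imaginary parts yields enough independent linear constraints to force $\bff_1(\mathbf 0)=\bff_2(\mathbf 0)$, which is \eqref{eq:local2}. Under the additional hypothesis $\nabla\bff_j(\mathbf 0)=\mathbf 0$, the Taylor bound improves to $|\delta\bff_j(\bx)|\le C|\bx|^{1+\alpha_j}$, so the $\delta\bff$-integrals drop to $\mathcal O(\tau^{-1-\alpha_j})$; simultaneously, the $\mathcal O(1)$ $\bff$-terms cancel by \eqref{eq:local2}. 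The surviving $\mathcal O(\tau^{-1})$ balance reads
\[
\frac{\bg_1(\mathbf 0)\cdot(\bd^\perp-\mathrm i\bd)}{\tilde{\bxi}\cdot\hat e^+}+\frac{\bg_2(\mathbf 0)\cdot(\bd^\perp-\mathrm i\bd)}{\tilde{\bxi}\cdot\hat e^-}=0,
\]
and varying $\bd$ again recovers a full vector identity linking $\bg_1(\mathbf 0)$ to $\bg_2(\mathbf 0)$. A direct calculation of the resulting linear map, using $\hat e^\pm=(\cos\theta_{\max/\min},\sin\theta_{\max/\min})$ and the corresponding normals, identifies this map with the reflection $\Theta$ in \eqref{eq:local1} whose entries depend only on $\theta_{\max}-\theta_{\min}$.

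\textbf{Main obstacle.}
The delicate part is the two-level asymptotic bookkeeping in $\tau^{-1}$: one must verify, once $\nabla\bff_j(\mathbf 0)=\mathbf 0$ is imposed, that the $\mathcal O(1)$ cancellation furnished by \eqref{eq:local2} is clean enough to isolate the next-order $\bg$-contribution, and that the Hölder remainders $\delta\bff_j,\delta\bg_j$ do not leak into this order. A secondary technical issue is showing that the family of scalar relations indexed by $\bd\in\mathbb S^1$ is rich enough, after separating real and imaginary parts, to reconstruct the full vector identities \eqref{eq:local2} and \eqref{eq:local1}; this ultimately rests on the fact that the opening angle $\theta_{\max}-\theta_{\min}\in(0,\pi)$ so $\hat e^+$ and $\hat e^-$ are linearly independent.
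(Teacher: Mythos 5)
Your overall strategy coincides with the paper's: Betti's second formula on $S_h$, Taylor/H\"older expansions of $\bff_j,\bg_j$, explicit evaluation of $\int_{\Gamma^\pm_h}\bu_0\,\mathrm d\sigma$ and $\int_{\Gamma^\pm_h}\mathcal{T}_\nu\bu_0\,\mathrm d\sigma$, exponential smallness of the $\Lambda_h$ contribution, and a two-level extraction in $\tau$ (order one for $\bff$, order $\tau^{-1}$ for $\bg$). Your Steps 1 and 2 reproduce the paper's identity \eqref{eq:inter0} and its remainder estimates essentially verbatim.

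The gap is in Step 3, which is the decisive step of the proposition and which you yourself defer as a ``secondary technical issue'' rather than prove. Asserting that ``varying $\bd$ over an open arc yields enough independent linear constraints'' does not establish \eqref{eq:local2}: you must check that the constraint family annihilates exactly the difference $\bff_1({\bf 0})-\bff_2({\bf 0})$, i.e.\ that the coefficient of the common value, $\mathbf{A}^+(\bd)+\mathbf{A}^-(\bd)$, vanishes identically while the coefficients of the difference span $\mathbb{R}^2$ --- and that is precisely the computation you skip. In fact no variation of $\bd$ is needed: the paper fixes one admissible $\bd$, separates real and imaginary parts of the single complex scalar relation (equations \eqref{eq:R6}--\eqref{eq:R7}), and reduces them to the $2\times 2$ system \eqref{eq:R8} in the components of the difference along $\bd$ and $\bd^\perp$, whose determinant equals $-1$; that nondegeneracy is what forces \eqref{eq:local2}. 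The same criticism applies to \eqref{eq:local1}: ``a direct calculation identifies this map with $\Theta$'' is exactly the content to be proven --- the paper recasts the two real equations coming from \eqref{eq:R5}, in the coordinates \eqref{eq:cordi1}, as $\Re\bg_1({\bf 0})=\Theta\,\Re\bg_2({\bf 0})$; without that computation your argument produces no specific matrix, and the dependence of $\Theta$ only on $\theta_{\max}-\theta_{\min}$ is not self-evident. Finally, the ``main obstacle'' you raise about the second-order bookkeeping has a concrete resolution you should record: once $\bff_1({\bf 0})=\bff_2({\bf 0})$, the residual leading $\bff$-term is the common value dotted with $2\mu\eeta$ times $\frac{\bxi\cdot\nu_{\max}}{\bxi\cdot\hat{\bx}_{\max}}+\frac{\bxi\cdot\nu_{\min}}{\bxi\cdot\hat{\bx}_{\min}}=\Oh(\tau^{-2})$, because $\bxi/\tau=\bd+\mathrm i\sqrt{1+\kappa_{\mathrm s}^2/\tau^2}\,\bd^\perp$ deviates from $\bd+\mathrm i\bd^\perp$ only by $\Oh(\tau^{-2})$; hence it survives multiplication by $\tau$ harmlessly, and the H\"older remainders, being $\Oh(\tau^{-1-\alpha_j})$ and $\Oh(\tau^{-1-\beta_j})$, do not leak into the $\tau^{-1}$ level either. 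Supplying these explicit linear-algebra computations would close the argument along the paper's lines.
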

\begin{rem}\nonumber
It is essential to note that our approach accommodates both complex-valued and real-valued functions for $\mathbf{u}(\mathbf{x})$. In this paper, we specifically consider $\mathbf{u}(\mathbf{x})$ to be complex-valued to emphasize the generality of the corresponding method.
\end{rem}

\begin{proof}[Proof of Proposition~\ref{prop:trans1}]
Thanks to the symmetrical role of $(\Re\bv,\Re\bw)$ and $(\Im\bv,\Im\bw)$. We only need to check the relevant results for $(\Re\bv,\Re\bw)$. By a similar proof, we can prove that these results are still valid for $(\Im\bv,\Im\bw)$, so for $(\bv,\bw)$. From Betti's second formula and \eqref{eq:cgo2_pro1}, we have
 \begin{align}
&\int_{\Gamma^+_h}\Re\bg_1\cdot\bu_0-\mathcal{T}_{\nu}\bu_0\cdot\Re\bff_1\,\mathrm d\sigma+\int_{\Gamma^-_h}\Re\bg_2\cdot\bu_0-\mathcal{T}_{\nu}\bu_0\cdot\Re\bff_2\,\mathrm d\sigma\nonumber\\
&=\int_{\Lambda_h}\mathcal{T}_{\nu}\big(\Re\bv-\Re\bw\big)\cdot\bu_0-\mathcal{T}_{\nu}\bu_0\cdot\big(\Re\bv-\Re\bw\big)\, \mathrm d\sigma. \label{iden:1}
\end{align}
Given $\bff_1\in C^{\alpha_1}(\Gamma_h^+)^2$, $\bff_2\in C^{\alpha_2}(\Gamma_h)^2$, $\bg_1\in C^{\beta_1}(\Gamma_h^+)^2$, and $\bg_2\in C^{\beta_2}(\Gamma_h^-)^2$, we can express them using expansions as follows:
\begin{equation}\label{eq:expan}
\begin{cases}
 \bff_1(\bx)=\bff_1({\bf0})+ \nabla\bff_1({\bf 0})\cdot\bx+\delta\bff_1(\bx),\,\,\quad \big| \delta\bff_1(\bx) \big|\leq \|\bff_1\|_{C^{\alpha_1}}|\bx|^{\alpha_1+1}, \\
\bff_2(\bx)=\bff_2({\bf0})+\nabla\bff_2({\bf 0})\cdot\bx+ \delta\bff_2(\bx),\,\,\quad \big| \delta\bff_2 (\bx)\big|\leq \|\bff_2\|_{C^{\alpha_2}}|\bx|^{\alpha_2+1},  \\
 \bg_1(\bx)=\bg_1({\bf0})+ \delta\bg_1(\bx),\hspace{62pt}  \big| \delta\bg_1 (\bx)\big|\leq \|\bg_1\|_{C^{\beta_1}}|\bx|^{\beta_1},  \\
 \bg_2(\bx)=\bg_2({\bf0})+ \delta\bg_2(\bx),\hspace{62pt}  \big| \delta\bg_2(\bx) \big|\leq \|\bg_2\|_{C^{\beta_2}}|\bx|^{\beta_2}.
 \end{cases}
 \end{equation}
By applying these expansions, we can rewrite \eqref{iden:1} as
 \begin{align}
 &\Re\bg_1({\bf 0})\cdot\int_{\Gamma^+_h}\bu_0\,\mathrm d\sigma+\Re\bg_2({\bf 0})\cdot\int_{\Gamma^-_h}\bu_0\,\mathrm d\sigma-\Re\bff_1({\bf 0})\cdot\int_{\Gamma^+_h}\mathcal{T}_{\nu}\,\bu_0\,\mathrm d\sigma-\Re\bff_2({\bf 0})\cdot\int_{\Gamma^-_h}\mathcal{T}_{\nu}\,\bu_0\,\mathrm d\sigma\nonumber\\
 &=\int_{\Gamma^+_h}\mathcal{T}_{\nu}\bu_0\cdot\Big(\Re\nabla\bff_1({\bf 0})\cdot \bx+\Re\delta\bff_1\Big)\,\mathrm d\sigma-\int_{\Gamma^+_h}\Re\delta\bg_1\cdot\bu_0\,\mathrm d\sigma+\int_{\Gamma^-_h}\mathcal{T}_{\nu}\bu_0\cdot\Big(\Re\nabla\bff_2({\bf 0})\cdot \bx+\Re\delta\bff_2\Big)\,\mathrm d\sigma\nonumber\\
 &\,\,\,-\int_{\Gamma^-_h}\Re\delta\bg_2\cdot\bu_0\,\mathrm d\sigma+\int_{\Lambda_h}\mathcal{T}_{\nu}\big(\Re\bv-\Re\bw\big)\cdot\bu_0-\mathcal{T}_{\nu}\bu_0\cdot\big(\Re\bv-\Re\bw\big)\,\mathrm d\sigma\label{eq:inter1}.
 \end{align}
By replacing $(\Re\bv,\Re\bw)$ with $(\Im\bv,\Im\bw)$,we obtain a similar integral identity, implying that \eqref{eq:inter0} remains valid. Additionally, from \eqref{eq:T_cgo2}, we can deduce the following expression
\begin{align}\label{eq:esti2_1}
\int_{\Gamma^+_h}  \mathcal{T}_{\nu_{\max}}\bu_0  \,\mathrm d\sigma&=\int_{\Gamma_h^+}\bigg[ 2\mu\eeta(\bxi\cdot\nu_{\max}) e^{\bxi\cdot r\hat{\bx}_{\max}} +\frac{\mu\kappa_{\mathrm s}^2}{\tau}e^{\eeta\cdot \bx}\nu^{\perp}_{\max}\bigg]\mathrm{d}\sigma\nonumber\\
&=\Big[2\mu\eeta(\bxi\cdot\nu_{\max})+ \frac{\mu\kappa_{\mathrm s}^2}{\tau}\nu^{\perp}_{\max} \Big]\int_0^he^{\bxi\cdot r\hat{\bx}_{\max}}\mathrm{d}r\nonumber\\
&=\Big[ 2\mu\eeta(\bxi\cdot\nu_{\max})+ \frac{\mu\kappa_{\mathrm s}^2}{\tau}\nu^{\perp}_{\max} \Big]\Big(\frac{1}{-\bxi\cdot \hat{\bx}_{\max}}-\int_h^\infty e^{\bxi\cdot r\hat{\bx}_{\max}}\mathrm{d}r\Big)
\end{align}
By employing similar arguments, we can derive the following results
\begin{align}
\int_{\Gamma^-_h}  \mathcal{T}_{\nu_{\min}}\bu_0  \,\mathrm {d}\sigma&=
\Big[ 2\mu\eeta(\bxi\cdot\nu_{\min})+ \frac{\mu\kappa_{\mathrm s}^2}{\tau}\nu^{\perp}_{\min} \Big]\Big(\frac{1}{-\bxi\cdot \hat{\bx}_{\min}}-\int_h^\infty e^{\bxi\cdot r\hat{\bx}_{\min}}\,\mathrm{d}r\Big),\label{eq:esti2_2}\\
\int_{\Gamma^+_h}\bu_0\,\mathrm{d}\sigma&=\frac{\eeta}{-\bxi\cdot\hat{\bx}_{\max}}-\eeta\int_h^\infty e^{\bxi\cdot r\hat{\bx}_{\max}} \,\mathrm{d}r,\label{eq:esti2_3}\\
\int_{\Gamma^-_h}\bu_0\,\mathrm{d}\sigma&=\frac{\eeta}{-\bxi\cdot\hat{\bx}_{\min}}-\eeta\int_h^\infty e^{\bxi\cdot r\hat{\bx}_{\min}}\, \mathrm{d}r.\label{eq:esti2_4}
\end{align}
Substituting \eqref{eq:esti2_1}, \eqref{eq:esti2_2}, \eqref{eq:esti2_3}, and \eqref{eq:esti2_4} into \eqref{eq:inter1}, we obtain the following integral identity
 \begin{align}\label{eq:iden1}
&\bigg(\frac{\Re\bg_1({\bf 0})\cdot\eeta }{-\bxi\cdot \hat{\bx}_{\max}}+\frac{\Re\bg_2({\bf 0})\cdot\eeta }{-\bxi\cdot \hat{\bx}_{\min}}\bigg)+2\mu\bigg(\Re\bff_1({\bf 0})\cdot\,\frac{\eeta\,\bxi\cdot\nu_{\max}}{\bxi\cdot\nu_{\max}} +\Re\bff_2({\bf 0})\cdot\frac{\eeta\,\bxi\cdot\nu_{\min}}{\bxi\cdot\nu_{\min}}        \bigg)=\sum\limits_{j=1}^{14}R_j,
\end{align}
\allowdisplaybreaks
where
\begin{align*}
&R_1= -\Re\bff_1({\bf 0})\cdot 2\mu\,\eeta(\bxi\cdot\nu_{\max})\int_h^\infty e^{\bxi\cdot r\hat{\bx}_{\max}} \mathrm{d}r, \hspace{45pt}
 R_2= \Re\bg_1({\bf 0})\cdot\eeta\int_h^\infty e^{\bxi\cdot r\hat{\bx}_{\max}} \mathrm{d}r,\\
 &R_3= -\Re\bff_2({\bf 0})\cdot 2\mu\,\eeta(\bxi\cdot\nu_{\min})\int_h^\infty e^{\bxi\cdot r\hat{\bx}_{\min}} \mathrm{d}r,\hspace{48pt}
 R_4= \Re\bg_2({\bf 0})\cdot\eeta\int_h^\infty e^{\bxi\cdot r\hat{\bx}_{\min}} \mathrm{d}r,\\
&R_5=-\Re\bff_1({\bf 0})\cdot\frac{\mu\kappa_{\mathrm s}^2}{\tau}{\nu}^\perp_{\max}\int_h^\infty e^{\bxi\cdot r\hat{\bx}_{\max}} \mathrm{d}r,
\hspace{68pt}
R_6=\int_{\Gamma_h^+}\Re\delta\bff_1\cdot\mathcal{T}_{\nu_{\max}}\bu_0\mathrm{d}\sigma,\\
&R_7=-\Re\bff_2({\bf 0})\cdot\frac{\mu\kappa_{\mathrm s}^2}{\tau}{\nu}^\perp_{\max}\int_h^\infty e^{\bxi\cdot r\hat{\bx}_{\min}} \mathrm{d}r,
\hspace{69pt}
R_8=\int_{\Gamma_h^-}\Re\delta\bff_2\cdot\mathcal{T}_{\nu_{\min}}\bu_0\mathrm{d}\sigma,\\
&R_{9}=\int_{\Lambda_h}\mathcal{T}_{\nu}\bu_0\cdot\big(\Re\bv-\Re\bw\big)-\mathcal{T}_{\nu}\big(\Re\bv-\Re\bw\big)\cdot\bu_0\,\mathrm d\sigma,\hspace{12pt}
 R_{10}=-\int_{\Gamma_h^-}\Re\delta\bg_2\cdot\bu_0\mathrm{d}\sigma,\\
&R_{11}=\Re\bff_1({\bf 0})\cdot \frac{\mu\kappa_{\mathrm s}^2}{\tau}\,\frac{\nu_{\max}^\perp}{\bxi\cdot\hat{\bx}_{\max}}+\Re\bff_2({\bf 0})\cdot \frac{\mu\kappa_{\mathrm s}^2}{\tau}\,\frac{\nu_{\min}^\perp}{\bxi\cdot\hat{\bx}_{\min}},\hspace{11pt}
R_{12}=-\int_{\Gamma_h^+}\Re\delta\bg_1\cdot\bu_0\mathrm{d}\sigma,\\
&R_{13}=\int_{\Gamma_h^-}\big(\nabla\bff_2({\bf 0})\cdot\bx\big) \cdot\mathcal{T}_{\nu_{\min}}\bu_0\mathrm{d}\sigma,
\hspace{86pt} R_{14}=\int_{\Gamma_h^+}\big(\nabla\bff_2({\bf 0})\cdot\bx\big) \cdot\mathcal{T}_{\nu_{\max}}\bu_0\mathrm{d}\sigma.\\
\end{align*}
From the fact that $|\xi|=\sqrt{2\tau^2+\kappa^2_\mathrm s}$ and $|\eta|=\sqrt{2+\kappa^2_\mathrm s/\tau^2}\leq\sqrt{3}$, along with the expressions for $R_1$--$R_5$, $R_7$ and Lemma~\ref{lem:esti3}, we can derive the following estimates for sufficiently large $\tau$,
\begin{alignat}{4}
&\big|R_1\big|=\Oh(\tau^{-1}e^{-c_1\tau}),&\quad\big|R_2\big|&=\Oh(e^{-c_2\,\tau}), &\quad\big|R_3\big|&=\Oh(\tau^{-1}e^{-c_3\tau}),\nonumber\\
&\big|R_4\big|=\Oh(e^{-c_4\,\tau}),&\quad\big|R_5\big|&=\Oh(\tau^{-1}e^{-c_5\,\tau}),&\quad\big|R_7\big|&=\Oh(\tau^{-1}e^{-c_7\,\tau}),\label{eq:R1}
\end{alignat}
where $c_1,\,c_2,\, c_3,\,c_4,\,c_5$ and $c_7$ are all positive constants independent of $\tau$.
Next, combining the estimates of $\delta\bff_1$ in \eqref{eq:expan}, the expression for $\mathcal{T}_{\nu_{\max}}\bu_0$ in \eqref{eq:T_cgo2}, and Lemma~\ref{lem:esti3}, we find
\begin{equation}\label{inter6}
\big|R_6\big|\leq\,c'_0\,\int_0^h \tau r^{\alpha_1+1}e^{\tau\bd\cdot \bx}+\tau^{-1} r^{\alpha_1+1}e^{\tau\bd\cdot \bx}\,\mathrm{d}r= \Oh(\tau^{-\alpha_1-1} ).
\end{equation}
Similarly, we obtain the following estimates
{\small \begin{align}\label{eq:R2}
\big|R_8\big|= \Oh(\tau^{-\alpha_2-1} ),\,\,\big|R_{10}\big|= \Oh(\tau^{-\beta_1-1} ),\,\,\big|R_{12}\big|= \Oh(\tau^{-\beta_2-1} ),\, \,|R_{13}|= |R_{14}|=\Oh(\tau^{-1} ).
\end{align}}
Since the unit vector $\bd $ in $\bxi$ satisfies $\bd\cdot\bx\leq\,\zeta_0<0$ for $ \bx\in \mathcal{A}$,
we have
\beq\nonumber
\bigg|\frac{1}{\bxi\cdot\bx_{\max}}\bigg|\leq\frac{1}{
\tau|\bd\cdot\bx_{\max}|}\leq \tau^{-1}|\zeta_0|^{-1}\quad \mbox{and}\quad \bigg|\frac{1}{\bxi\cdot\bx_{\min}}\bigg|\leq\frac{1}{
\tau|\bd\cdot\bx_{\min}|}\leq \tau^{-1}|\zeta_0|^{-1}.
\eeq
This implies that
\beq\label{inter11}
\quad \big|R_{11}\big|= \Oh(\tau^{-2} )\quad \mbox{and}\quad \bigg|\frac{\Re\bg_1({\bf 0})\cdot\eeta }{\bxi\cdot \hat{\bx}_{\max}}+\frac{\Re\bg_2({\bf 0})\cdot\eeta }{\bxi\cdot \hat{\bx}_{\min}}\bigg|=\Oh(\tau^{-1}).
\eeq

From the Cauchy-Schwarz inequality and the trace theorem, we obtain
\begin{align}\label{inter9}
\big|R_{9}  \big|&\leq \|\bu_0\|_{L^2(\Lambda_h)^2}\|\mathcal{T}_{\nu}(\Re\bv-\Re\bw)\|_{L^2(\Lambda_h)^2}+\|\Re\bv-\Re\bw\|_{L^2(\Lambda_h)^2}\|\mathcal{T}_{\nu} \bu_0\|_{L^2(\Lambda_h)^2}\nonumber\\[1mm]
&\leq \big( \|\bu_0\|_{L^2(\Lambda_h)^2}+\|\mathcal{T}_{\nu}\bu_0\|_{L^2(\Lambda_h)^2}\big)\big\|\bv-\bw\big\|_{H^1(S_h)^2}\nonumber\\[1mm]
&\leq c_{9}\big\|\bu_0\big\|_{H^1(S_h)^2}\leq c'_{9} e^{-\tau\zeta_0\,h}.
\end{align}
Together with the estimates for $R_1-R_{14}$ provided in \eqref{eq:R1}, \eqref{inter6}, \eqref{eq:R2}, \eqref{inter11}, and \eqref{inter9}, the following equation results from \eqref{eq:iden1}:
\beq\label{eq:R4}
\lim\limits_{\tau\rightarrow +\infty}\Big(\Re\bff_1({\bf 0})\cdot\,\frac{\eeta\,\bxi\cdot\nu_{\max}}{\bxi\cdot\nu_{\max}} +\Re\bff_2({\bf 0})\cdot\frac{\eeta\,\bxi\cdot\nu_{\min }}{\bxi\cdot\nu_{\min}}\Big)=0.
\eeq
Moreover, since $\nabla\bff_1({\bf 0})=\nabla\bff_2({\bf 0})={\bf 0}$ and $\alpha_1,\,\alpha_2\in (0,1)$, substituting \eqref{eq:R4} into \eqref{eq:iden1} and multiplying the new identity $\tau$ yields
\beq\label{eq:R5}
\lim\limits_{\tau\rightarrow +\infty}\eeta\cdot\Big(\frac{\tau\,\Re\bg_1({\bf 0})}{\bxi\cdot\hat{\bx}_{\max}}+\frac{\tau\,\Re\bg_2({\bf 0})}{\bxi\cdot\hat{\bx}_{\min}}\Big)=0.
\eeq
\allowdisplaybreaks
Next, recalling the expressions for $\bxi$ and $\eeta$ in \eqref{eq:cgo2'},we find that \eqref{eq:R4} can be reduced to the following two equations
\begin{align}\label{eq:R6}
&\bd^\perp\cdot\Re\bff_1({\bf 0})\Big[(\bd\cdot\nu_{\max})(\bd\cdot\hat{\bx}_{\min})-(\bd^\perp\cdot\nu_{\max})(\bd^\perp\cdot\hat{\bx}_{\min})
\Big]\nonumber\\
&+\bd\cdot\Re\bff_1({\bf 0})\Big[ (\bd^\perp\cdot\nu_{\max})(\bd\cdot\hat{\bx}_{\min})+(\bd\cdot\nu_{\max})(\bd^\perp\cdot\hat{\bx}_{\min}) \Big]\nonumber\\
&+\bd^\perp\cdot\Re\bff_2({\bf 0}) \Big[(\bd\cdot\nu_{\min})(\bd\cdot\hat{\bx}_{\max})-(\bd^\perp\cdot\nu_{\min})(\bd^\perp\cdot\hat{\bx}_{\max})  \Big]\nonumber\\
&+\bd\cdot\Re\bff_2({\bf 0})\Big[(\bd^\perp\cdot\nu_{\min})(\bd\cdot\hat{\bx}_{\max})+  (\bd\cdot\nu_{\min})(\bd^\perp\cdot\hat{\bx}_{\max}) \Big]
   =0
\end{align}
and
\begin{align}\label{eq:R7}
&\bd^\perp\cdot\Re\bff_1({\bf 0})\Big[(\bd^\perp\cdot\nu_{\max})(\bd\cdot\hat{\bx}_{\min})+(\bd\cdot\nu_{\max})(\bd^\perp\cdot\hat{\bx}_{\min})
\Big]\nonumber\\
&-\bd\cdot\Re\bff_1({\bf 0})\Big[ (\bd\cdot\nu_{\max})(\bd\cdot\hat{\bx}_{\min})-(\bd^\perp\cdot\nu_{\max})(\bd^\perp\cdot\hat{\bx}_{\min}) \Big]\nonumber\\
&+\bd^\perp\cdot\Re\bff_2({\bf 0}) \Big[(\bd^\perp\cdot\nu_{\min})(\bd\cdot\hat{\bx}_{\max})+(\bd\cdot\nu_{\min})(\bd^\perp\cdot\hat{\bx}_{\max})  \Big]\nonumber\\
&-\bd\cdot\Re\bff_2({\bf 0})\Big[(\bd\cdot\nu_{\min})(\bd\cdot\hat{\bx}_{\max})- (\bd^\perp\cdot\nu_{\min})(\bd^\perp\cdot\hat{\bx}_{\max}) \Big]
  =0.
\end{align}
Fix $\theta_{\min}=0$, $\theta_{\max}=\theta$ and $\theta_0={\mathrm{arg}}(\bd)$, where $\mathrm{arg}(\cdot)$ represents the argument of the vector $\bd$. We have
\begin{align}\label{eq:cordi1}
&\hat{\bx}_{\min}=(1,0)^t, \quad \hat{\bx}_{\max}=(\cos\theta,\sin\theta)^t,\quad\quad  \nu_{\min}=(0,-1)^t,\quad\quad \nu_{\max}=(-\sin\theta,\cos\theta)^t,\nonumber\\
& \nu^{\perp}_{\min}=(1,0)^t,\quad \nu_{\max}^\perp=(-\cos\theta,-\sin\theta)^t,\,\,
\bd=(\cos\theta_0,\sin\theta_0)^t,\,\,\bd^\perp=(-\sin\theta_0,\cos\theta_0)^t.
\end{align}
With some simplifications, we can rewrite \eqref{eq:R6} and \eqref{eq:R7} as follows
\beq\label{eq:R8}
\begin{bmatrix}
\sin(2\theta_0-\theta),&\cos(2\theta_0-\theta)\\[2mm]
\cos(2\theta_0-\theta),&-\sin(2\theta_0-\theta)
\end{bmatrix}\bx={\bf 0},
\eeq
where
\begin{align*}
\bx=\begin{bmatrix} x_1\\x_2 \end{bmatrix}=\begin{bmatrix}\bd^\perp\cdot\Re\bff_1({\bf 0})-\bd^\perp\cdot\Re\bff_2({\bf 0})\\\bd\cdot\Re\bff_1({\bf 0})-\bd\cdot\Re\bff_2({\bf 0})
  \end{bmatrix}.
\end{align*}
Note that
$$\mathrm{\det}\left(\begin{bmatrix}
\sin(2\theta_0-\theta),&\cos(2\theta_0-\theta)\\[2mm]
\cos(2\theta_0-\theta),&-\sin(2\theta_0-\theta)
\end{bmatrix}\right)=-1\neq 0.$$
Thus, there exists a unique zero solution to \eqref{eq:R8}, implying that \eqref{eq:local2} holds.

From Equation \eqref{eq:R5}, we can derive the following equations
\beq\nonumber
(\bd^\perp\cdot \bg_1({\bf 0}) )(\bd\cdot\hat{\bx}_{\min})+(\bd\cdot\bg_1({\bf 0}))(\bd^\perp\cdot\hat{\bx}_{\min})+(\bd^\perp\cdot \bg_2({\bf 0})) (\bd\cdot\hat{\bx}_{\max})+(\bd\cdot\bg_2({\bf 0}))(\bd^\perp\cdot\hat{\bx}_{\max})=0
\eeq
and
\beq\nonumber
-(\bd\cdot \bg_1({\bf 0}) )(\bd\cdot\hat{\bx}_{\min})+(\bd^\perp\cdot\bg_1({\bf 0}))(\bd^\perp\cdot\hat{\bx}_{\min})-(\bd\cdot \bg_2({\bf 0})) (\bd\cdot\hat{\bx}_{\max})+(\bd^\perp\cdot\bg_2({\bf 0}))(\bd^\perp\cdot\hat{\bx}_{\max})=0.
\eeq
Based on Equation \eqref{eq:cordi1} and complex simplification,  the equations above can be equivalently recast as follows
\beq\nonumber
\Re\bg_1({\bf 0}) =\Theta\Re\bg_2({\bf 0}),\quad \mbox{where}\quad  \Theta=\begin{bmatrix}-\cos\theta,&-\sin\theta\\
    -\sin\theta, &\,\cos\theta
    \end{bmatrix}\,\,\mbox{and}\,\, \det(\Theta)\neq 0,
\eeq
which implies that Equation \eqref{eq:local1} holds.

The proof is complete.
 \end{proof}

\subsection{Local uniqueness results for 3D case}
In this subsection, we address the 3D case and introduce a dimension reduction operator $\mathcal{P}$ to analyze the continuity and rotational continuity of the jumps $\mathbf{f}$ and $\mathbf{g}$ at a 3D edge corner. We assume that the fault $\Sigma$ is a Lipschitz surface containing at least one 3D edge corner $\mathbf{x}_c = (\mathbf{x}'_c, x^3_c)^t \in \Sigma \Subset \mathbb{R}^3$. The following definition introduces the dimension reduction operator, which is essential for deriving an auxiliary proposition analogous to Proposition~\ref{prop:trans2} at a 3D edge corner.
\begin{defn}\nonumber
Let  $S_{\bx_c,h}$ be defined as in Equation \eqref{eq:ball1}, with the vertex $\bx'_c$. Consider a function $\phi$ defined over the domain $S_{\bx'_c,h}\times (-M,M)$, where $M>0$. For any fixed $x^3_c\in (-M,M)$, we assume that $\phi\in C^\infty_0(x^3_c-L,x^3_c+L)$ is a nonnegative function such that $\phi \not\equiv\emptyset$. Let $\bx=(\bx',x_3)\in \mathbb{R}^3$. The dimension reduction operator $\mathcal{P}$ is defined as follows
\begin{equation}\nonumber
\mathcal{P}(\bh)(\bx')=\int_{x^3_c-L}^{x_c^3+L}\phi(x_3)\bh(\bx',x_3)\mathrm{d}x_3,
\end{equation}
where $S_{\bx'_c,h}$ is defined in \eqref{eq:ball1} with the vertex $\bx_c'$.
\end{defn}
Before deriving the main results of this subsection, we review several key properties of this operator.
\begin{lem}\nonumber\cite[Lemma~3.1]{DLS2021}
Let $\bh\in H^m(S_{\bx'_c,h}\times (-M,M))^3$, $m=1,2$. Then
\beq\nonumber
\mathcal{P}(\bh)(\bx')\in H^m(S_{\bx'_c,h}))^3.
\eeq
Similarly, if $\bh\in C^\delta\Big(\overline{S_{\bx'_c,h}}\times [-M,M]\Big)^3$ with $\delta\in (0,1)$, then
\beq\nonumber
\mathcal{P}(\bh)(\bx')\in C^\delta\Big(\overline{S_{\bx'_c,h}}\Big)^3.
\eeq
\end{lem}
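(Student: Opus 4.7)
The plan is to treat the two parts (Sobolev and Hölder) separately, both resting on the elementary observation that integration against a smooth compactly supported one-variable weight preserves regularity in the transverse variables $\bx'$.

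For the Sobolev statement, I would first establish the base $L^2$ bound. By the Cauchy--Schwarz inequality applied to the definition of $\mathcal{P}$,
\begin{equation*}
|\mathcal{P}(\bh)(\bx')|^2 \le \|\phi\|_{L^\infty}^2 \cdot (2L) \int_{x_c^3-L}^{x_c^3+L} |\bh(\bx',x_3)|^2 \, \mathrm{d}x_3,
\end{equation*}
so Fubini's theorem immediately yields $\|\mathcal{P}(\bh)\|_{L^2(S_{\bx'_c,h})^3} \le C\|\bh\|_{L^2(S_{\bx'_c,h}\times(-M,M))^3}$. For $m=1,2$, I would next verify that weak derivatives in $\bx'$ commute with the operator $\mathcal{P}$, which is the heart of the argument. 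Given a multi-index $\alpha$ with $|\alpha|\le m$ and any test function $\varphi\in C_c^\infty(S_{\bx'_c,h})$, an application of Fubini together with the definition of weak derivatives gives
\begin{equation*}
\int_{S_{\bx'_c,h}} \mathcal{P}(\bh)(\bx') \, \partial^\alpha_{\bx'}\varphi(\bx') \, \mathrm{d}\bx' = (-1)^{|\alpha|}\int_{S_{\bx'_c,h}} \mathcal{P}(\partial^\alpha_{\bx'}\bh)(\bx') \, \varphi(\bx') \, \mathrm{d}\bx',
\end{equation*}
so that $\partial^\alpha_{\bx'}\mathcal{P}(\bh) = \mathcal{P}(\partial^\alpha_{\bx'}\bh)$ in the weak sense. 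Applying the base $L^2$ bound to each $\partial^\alpha_{\bx'}\bh$ for $|\alpha|\le m$ then places $\mathcal{P}(\bh)$ in $H^m(S_{\bx'_c,h})^3$.

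For the Hölder statement I would separately control the sup-norm and the Hölder seminorm of $\mathcal{P}(\bh)$ on $\overline{S_{\bx'_c,h}}$. The sup-norm bound is immediate from $|\mathcal{P}(\bh)(\bx')|\le \|\phi\|_{L^1}\|\bh\|_{L^\infty}$. For the seminorm, for any $\bx'_1,\bx'_2\in\overline{S_{\bx'_c,h}}$ one simply estimates
\begin{equation*}
|\mathcal{P}(\bh)(\bx'_1)-\mathcal{P}(\bh)(\bx'_2)| \le \int_{x_c^3-L}^{x_c^3+L} \phi(x_3) |\bh(\bx'_1,x_3)-\bh(\bx'_2,x_3)| \, \mathrm{d}x_3 \le \|\phi\|_{L^1} [\bh]_{C^\delta} |\bx'_1-\bx'_2|^\delta,
\end{equation*}
which directly yields $\mathcal{P}(\bh)\in C^\delta(\overline{S_{\bx'_c,h}})^3$.

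There is essentially no significant obstacle in the proof; the only nontrivial point is justifying that the $\bx'$-derivative may be passed through the $x_3$-integral when working with $H^m$ functions rather than smooth functions, and this is handled cleanly through the weak-derivative/Fubini computation above (or, alternatively, by approximating $\bh$ by smooth functions and passing to the limit using the $L^2$ bound already established). The whole argument is therefore a routine verification once the commutation identity $\partial^\alpha_{\bx'}\mathcal{P}=\mathcal{P}\partial^\alpha_{\bx'}$ is set up correctly.
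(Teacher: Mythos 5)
Your proof is correct, and it is essentially the standard argument: the paper itself does not reproduce a proof (it simply cites \cite[Lemma~3.1]{DLS2021}), and the proof there proceeds by the same routine Cauchy--Schwarz/Fubini estimates and commutation of $\bx'$-derivatives with the $x_3$-integration that you carry out. The only point worth making explicit is that $\varphi(\bx')\phi(x_3)$ is an admissible test function on the cylinder because $\mathrm{supp}\,\phi\Subset(x_c^3-L,x_c^3+L)\Subset(-M,M)$, which is exactly what justifies your weak-derivative identity.
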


Noting that the three-dimensional isotropic elastic operator $\Delta^*$ defined in \eqref{def:Lam} can be rewritten as
\begin{align*}
\Delta^*&=\begin{bmatrix}
\lambda\Delta +(\lambda+\mu)\partial_1^2&(\lambda+\mu)\partial_1\partial_2&(\lambda+\mu)\partial_1\partial_3\\
(\lambda+\mu)\partial_1\partial_2&\lambda\Delta +(\lambda+\mu)\partial_2^2&(\lambda+\mu)\partial_2\partial_3\\
(\lambda+\mu)\partial_1\partial_3&(\lambda+\mu)\partial_2\partial_3&\lambda\Delta +(\lambda+\mu)\partial_3^2
\end{bmatrix}\\[5mm]
&=\widetilde{\Delta}^*+
\begin{bmatrix}
\lambda\partial^2_3&0&(\lambda+\mu)\partial_1\partial_3\\
0&\lambda\partial_3^2&(\lambda+\mu)\partial_2\partial_3\\
(\lambda+\mu)\partial_1\partial_3&(\lambda+\mu)\partial_2\partial_3&\lambda\partial_3^2+(\lambda+\mu)\partial_3^2
\end{bmatrix},
\end{align*}
where
\beq\label{eq:L}
\widetilde{\Delta}^*=\begin{bmatrix}
\lambda\Delta'+(\lambda+\mu)\partial_1^2&(\lambda+\mu)\partial_1\partial_2&0\\
(\lambda+\mu)\partial_1\partial_2&\lambda\Delta'+(\lambda+\mu)\partial_2^2&0\\
0&0&\lambda\Delta'
\end{bmatrix}=\begin{bmatrix}
\hat{\Delta}^*&0\\
0&\lambda\Delta'
\end{bmatrix}
\eeq
with $\Delta'=\partial_1^2+\partial_2^2$ being the Laplace operator with respect to the $\bx'$-variables. Here, the operator $\hat{\Delta}^{*}$ is the two-dimensional isotropic elastic operator with respect to the $\bx'$-variables.

For any fixed $x_c^3\in (-M, M)$ and sufficiently small $L>0$ such that $(x_c^3-L,x_c^3+L)\Subset (-M,M)$, additionally, we have $\Delta^*=\widetilde{\Delta}^*$. Since $\Delta^*$ is invariant under rigid motion, we can assume that $\bx'_c=\mathbf{ 0}$ in $\mathbb R^2.$ Thus, let $\mathcal{A}_h$ and $\Gamma^\pm_{h}$ be defined as in \eqref{eq:ball1}, with $\bx'_c$ coinciding with the origin in the 2D case. After some tedious calculations, we obtain the following lemma.
\begin{lem}\label{lem:R3_1}
Under the setup regarding $S_{h}$ and a 3D edge corner as described, suppose that $\bff_1\in C^{1,{\alpha_1}}\big(\Gamma^+_{h}\times (-M,M)\big)^3\cap H^{1/2}\big(\Gamma^+_{h}\times (-M,M)\big)^3$, $\bff_2\in C^{1,{\alpha_2}}\big(\Gamma^+_{h}\times (-M,M)\big)^3\cap H^{1/2}\big(\Gamma^+_{h}\times (-M,M)\big)^3$, $\bg_1\in C^{\beta_1}\big(\Gamma^+_{h}\times (-M,M)\big)^3\cap L^2\big(\Gamma^\pm_{h}\times (-M,M)\big)^3$, and $\bg_2\in C^{\beta_2}\big(\Gamma^-_{h}\times (-M,M)\big)^3\cap L^2\big(\Gamma^-_{h}\times (-M,M)\big)^3$, without depending on $x_3$, where $\alpha_1,\,\alpha_2,\,\beta_1$ and $\beta_2$ belong to $ (0,1)$. For $i=1,2$, write
\allowdisplaybreaks
\begin{align}\label{eq:DEC}
\bv=(\bv^{(1,2)}, \, v_3)^t,\quad \bw=(\bw^{(1,2)},\,w_3)^t,\quad \bff_i=(\bff^{(1,2)}_i,\quad f_i^{3})^t,\quad\mbox{and}\quad \bg_i=(\bg^{(1,2)}_i,\, g^{3}_i)^t.
\end{align}
Then  the following transmission eigenvalue problem for $(\bv,\bw)\in H^1\big(S_h\times (-M,M)\big)^3\times  H^1\big(S_h\times (-M,M)\big)^3$:
\begin{equation*}
\begin{cases}
\Delta^*\,\bv+\omega^2\,\bv=\mathbf{0},\hspace{5pt} \Delta^*\,\bw+\omega^2\,\bw=\mathbf{0}  &\mbox{in}\quad S_h\times (-M,M),\\[2mm]
\bw-\bv=\bff_1,\qquad\quad\mathcal{T}_{\nu}\,\bw-\mathcal{T}_{\nu}\,\bv=\bg_1 &\mbox{on}\quad\Gamma^+_{h}\times (-M,M),\\[2mm]
\bw-\bv=\bff_2,\qquad\quad\mathcal{T}_{\nu}\,\bw-\mathcal{T}_{\nu}\,\bv=\bg_2 &\mbox{on}\quad\Gamma^-_{h}\times (-M,M),
\end{cases}
\end{equation*}
can be reduced  into
\allowdisplaybreaks
\begin{equation}\label{eq:trans3D'}
\begin{cases}
\hat{\Delta}^*\,\mathcal{P}(\bw)(\bx')+\omega^2\mathcal{P}(\bw)(\bx')=G_1(\bx'), &\bx'\in S_h,\\[2mm] \hat{\Delta}^*\,\mathcal{P}(\bv)(\bx')+\omega^2\mathcal{P}(\bv)(\bx')=G_2(\bx'),  &\bx'\in S_h,\\[2mm]
\mathcal{P}(\bw)(\bx')=\mathcal{P}(\bv)(\bx')+\mathcal{P}(\bff_1)(\bx'), &\bx'\in \Gamma^+_{h},\\[2mm]
\mathcal{P}(\bw)(\bx')=\mathcal{P}(\bv)(\bx')+\mathcal{P}(\bff_2)(\bx'), &\bx'\in \Gamma^-_{h},\\[2mm]
\mathcal{R}^+_1=\mathcal{R}^+_2+\mathcal{P}(\bg)(\bx'),&\bx'\in \Gamma^+_{h},\\[2mm]
\mathcal{R}^-_1=\mathcal{R}^-_2+\mathcal{P}(\bg)(\bx'),&\bx'\in \Gamma^-_{h},
\end{cases}
\end{equation}
where
\begin{align*}
G_1=&-\int^{+L}_{-L}\phi''(x_3)\begin{bmatrix} \lambda \bv^{(1,2)}(\bx)\\
 (2\lambda+\mu)v_3(\bx)  \end{bmatrix}\mathrm{d}x_3+(\lambda+\mu)\int_{-L}^{+L}\phi'(x_3)\begin{bmatrix} \nabla v_3(\bx)\\ \partial_1 v_1(\bx)+\partial_2 v_2(\bx)  \end{bmatrix}\mathrm{d}x_3\\[2mm]
G_2=&-\int^{+L}_{-L}\phi''(x_3)\begin{bmatrix} \lambda \bw^{(1,2)}(\bx)\\ (2\lambda+\mu)w_3(\bx)  \end{bmatrix}\mathrm{d}x_3+(\lambda+\mu)\int_{-L}^{+L}\phi'(x_3)\begin{bmatrix} \nabla w_3(\bx)\\ \partial_1 w_1(\bx)+\partial_2 w_2(\bx)  \end{bmatrix}\mathrm{d}x_3\\[2mm]
\mathcal{R}^+_1=&\begin{bmatrix}
\mathcal{T}_{\nu}\mathcal{P}(\bv^{(1,2)})+\lambda\mathcal{P}(\partial_3 v_3)\nu\\[1mm]
\mu\partial_{\nu}\mathcal{P}(v_3)+\mu\begin{bmatrix}\mathcal{P}(\partial_3 v_1)\\ \mathcal{P}(\partial_3 v_2)
\end{bmatrix}\cdot \nu
\end{bmatrix},\quad\mathcal{R}^+_2=\begin{bmatrix}
\mathcal{T}_{\nu}\mathcal{P}(\bw^{(1,2)})+\lambda\mathcal{P}(\partial_3 w_3)\nu\\[1mm]
\mu\partial_{\nu}\mathcal{P}(w_3)+\mu\begin{bmatrix}\mathcal{P}(\partial_3 w_1)\\ \mathcal{P}(\partial_3w_2)
\end{bmatrix}\cdot \nu
\end{bmatrix} \,\,\mbox{on}\,\, \Gamma^+_{h},\\[2mm]
\mathcal{R}^-_1=&\begin{bmatrix}
\mathcal{T}_{\nu}\mathcal{P}(\bv^{(1,2)})+\lambda\mathcal{P}(\partial_3 v_3)\nu\\[1mm]
\mu\partial_{\nu}\mathcal{P}(v_3)+\mu\begin{bmatrix}\mathcal{P}(\partial_3 v_1)\\ \mathcal{P}(\partial_3 v_2)
\end{bmatrix}\cdot \nu
\end{bmatrix},\quad\mathcal{R}^-_2=\begin{bmatrix}
\mathcal{T}_{\nu}\mathcal{P}(\bw^{(1,2)})+\lambda\mathcal{P}(\partial_3 w_3)\nu\\[1mm]
\mu\partial_{\nu}\mathcal{P}(w_3)+\mu\begin{bmatrix}\mathcal{P}(\partial_3 w_1)\\ \mathcal{P}(\partial_3w_2)
\end{bmatrix}\cdot \nu
\end{bmatrix} \,\,\mbox{on}\,\, \Gamma^-_{h}
\end{align*}
Here, $\nu$ denotes the exterior unit normal vectors to $\Gamma^\pm_{h}$, $\mathcal{T}_{\nu}$ is the two-dimensional boundary traction operator.
\end{lem}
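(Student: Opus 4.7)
The plan is to apply the dimension reduction operator $\mathcal{P}$ to each equation of the 3D transmission system and exploit the block decomposition of $\Delta^*$ displayed in \eqref{eq:L}. Because the weight $\phi(x_3)$ is compactly supported in $(x_c^3-L, x_c^3+L)$, the operator $\mathcal{P}$ commutes with every $\bx'$-derivative, and integration by parts in $x_3$ against $\phi$ produces no boundary contribution. Write $\Delta^* = \widetilde{\Delta}^* + \mathcal{D}_3$, where $\mathcal{D}_3$ collects the entries of $\Delta^*$ containing at least one factor of $\partial_3$. Applying $\mathcal{P}$ to $\Delta^*\bv + \omega^2\bv = \mathbf{0}$ and commuting $\mathcal{P}$ past $\widetilde{\Delta}^* = \mathrm{diag}(\hat{\Delta}^*, \lambda\Delta')$ immediately produces a 2D elastic identity of the form $\hat{\Delta}^*\mathcal{P}(\bv) + \omega^2\mathcal{P}(\bv) = -\mathcal{P}(\mathcal{D}_3\bv)$ on $S_h$, with the same argument delivering the corresponding identity for $\bw$.

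Next, I would compute $\mathcal{P}(\mathcal{D}_3\bv)$ row by row. Each term of the form $\partial_3^2 v_i$ contributes $\int \phi''(x_3) v_i\,\mathrm{d}x_3$ after double integration by parts, while each mixed derivative $\partial_i\partial_3 v_j$ contributes $-\int \phi'(x_3) \partial_i v_j\,\mathrm{d}x_3$. The first two rows of $\mathcal{D}_3\bv$ then sum to $\lambda \int \phi''\bv^{(1,2)}\,\mathrm{d}x_3 - (\lambda+\mu)\int \phi' \nabla' v_3\,\mathrm{d}x_3$ (before the outer minus sign), and the third row sums to $(2\lambda+\mu)\int \phi'' v_3\,\mathrm{d}x_3 - (\lambda+\mu)\int \phi'(\partial_1 v_1 + \partial_2 v_2)\,\mathrm{d}x_3$. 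Negating and stacking these two blocks yields exactly the source $G_1(\bx')$ declared in the statement, and the identical computation for $\bw$ gives $G_2(\bx')$.

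For the jump data, since $\bff_i$ and $\bg_i$ are independent of $x_3$ by hypothesis, applying $\mathcal{P}$ to $\bw - \bv = \bff_i$ on $\Gamma^\pm_h\times(-M,M)$ immediately produces $\mathcal{P}(\bw) - \mathcal{P}(\bv) = \mathcal{P}(\bff_i)$ on $\Gamma^\pm_h$. For the traction jumps I would use that the outward normal to $\Gamma^\pm_h\times(-M,M)$ is $\nu = (\nu', 0)^t$, so a direct calculation from the definition of $\mathcal{T}_\nu$ gives
\[
\mathcal{T}_\nu \bu = \begin{bmatrix} \mathcal{T}_\nu \bu^{(1,2)} + \lambda(\partial_3 u_3)\nu \\[1mm] \mu\partial_\nu u_3 + \mu(\partial_3 \bu^{(1,2)})\cdot\nu \end{bmatrix},
\]
where $\mathcal{T}_\nu$ on the right acts as the 2D traction operator on $\bu^{(1,2)}$; here $\nu_3 = 0$ eliminates every other $\partial_3$-contribution from $\lambda(\nabla\cdot \bu)\nu$ and from the off-diagonal entries of $2\mu \nabla^s \bu \cdot \nu$. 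Applying $\mathcal{P}$ to this identity and commuting with the tangential differentiation inside the 2D traction operator then reproduces the operators $\mathcal{R}^\pm_j$ in the statement, so the 3D traction jumps transfer to the claimed 2D identities on $\Gamma^\pm_h$.

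The main obstacle I anticipate is purely bookkeeping: one must match every surviving term after integration by parts with the correct block entry of $G_1, G_2, \mathcal{R}^\pm_j$, and verify that $\nu_3 = 0$ kills exactly the $\partial_3$ factors needed for the clean decomposition of $\mathcal{T}_\nu$. There is no conceptual novelty beyond careful sign tracking through the successive integrations by parts against $\phi'$ and $\phi''$; once this is done, the 3D transmission problem on $S_h\times(-M,M)$ reduces to the 2D transmission problem on $S_h$ as claimed.
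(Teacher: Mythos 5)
Your proposal is correct and takes essentially the same route as the paper, which states the lemma as the outcome of ``some tedious calculations'': apply $\mathcal{P}$, split $\Delta^*=\widetilde{\Delta}^*+\mathcal{D}_3$ as in \eqref{eq:L}, integrate by parts in $x_3$ against the compactly supported $\phi$ (no boundary terms), and use $\nu_3=0$ to decompose $\mathcal{T}_\nu$ into the blocks appearing in $\mathcal{R}^\pm_j$. Your term-by-term accounting of the $\phi'$ and $\phi''$ contributions reproduces the stated $G_1$, $G_2$ (modulo the paper's own immaterial mismatch in pairing the labels $G_1,G_2$ with $\bv,\bw$), so nothing is missing.
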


By applying the decomposition of $\widetilde{\Delta}^*$ given in \eqref{eq:L} along with the decompositions of $\bv$, $\bw$, $\bff_i$ and $\bg_i$ in \eqref{eq:DEC}, we can directly obtain the following results. The proof is omitted here.

\begin{lem}\nonumber
Under the same setup as in Lemma~\ref{lem:R3_1}, the PDE system \eqref{eq:trans3D'} is equivalent to the following two PDE systems:
\begin{equation}\label{eq:trans3D_1}
\begin{cases}
\hat{\Delta}^*\,\mathcal{P}(\bv^{(1,2)})(\bx')+\omega^2\mathcal{P}(\bv^{(1,2)})(\bx')=G^{(1,2)}_1(\bx')\quad &\mbox{in}\quad S_h,\\[1mm]
\hat{\Delta}^*\,\mathcal{P}(\bw^{(1,2)})(\bx')+\omega^2\mathcal{P}( \bw^{(1,2)})(\bx')=G^{(1,2)}_2(\bx') \quad &\mbox{in}\quad S_{h},\\[1mm]
\mathcal{P}(\bw^{(1,2)})(\bx')=\mathcal{P}(\bv^{(1,2)})(\bx')+\mathcal{P}(\bff^{(1,2)}_1)(\bx') \quad&\mbox{on}\quad \Gamma^+_{h},\\[1mm]
\mathcal{P}(\bw^{(1,2)})(\bx')=\mathcal{P}(\bv^{(1,2)})(\bx')+\mathcal{P}(\bff^{(1,2)}_2)(\bx') \quad&\mbox{on}\quad \Gamma^-_{h},\\[1mm]
\mathcal{T}_{\nu}\mathcal{P}(\bw^{(1,2)})
=\mathcal{T}_{\nu}\mathcal{P}(\bv^{(1,2)})+\mathcal{P}(\bg^{(1,2)}_1)(\bx')\quad&\mbox{on}\quad \Gamma^+_{h},\\[1mm]
\mathcal{T}_{\nu}\mathcal{P}(\bw^{(1,2)})
=\mathcal{T}_{\nu}\mathcal{P}(\bv^{(1,2)})+\mathcal{P}(\bg^{(1,2)}_2)(\bx')\quad&\mbox{on}\quad \Gamma^-_{h}
\end{cases}
\end{equation}
and
\begin{equation}\label{eq:trans3D_2}
\begin{cases}
\lambda\Delta'\,\mathcal{P}(v_3)(\bx')+\omega^2\mathcal{P}( v_3)(\bx')=G^{(3)}_1(\bx')&\mbox{in}\quad  S_h,\\[1mm] \lambda\Delta'\,\mathcal{P}(w_3)(\bx')+\omega^2\mathcal{P}( w_3)(\bx')=G^{(3)}_2(\bx')  &\mbox{in}\quad  S_h,\\[1mm]
\mathcal{P}(w_3)(\bx')=\mathcal{P}(v_3)(\bx')+\mathcal{P}(f^{3}_1)(\bx')&\mbox{on}\quad  \Gamma^+_{h},\\[1mm]
\mathcal{P}(w_3)(\bx')=\mathcal{P}(v_3)(\bx')+\mathcal{P}(f^{3}_2)(\bx')&\mbox{on}\quad  \Gamma^-_{h},\\[1mm]
\partial_{\nu}\mathcal{P}(w_3)(\bx')
=\partial_{\nu}\mathcal{P}(v_3)(\bx')+\frac{1}{\mu}\mathcal{P}(g^{3}_1)(\bx')\qquad\quad&\mbox{on}\quad  \Gamma^+_{h},\\[1mm]
\partial_{\nu}\mathcal{P}(w_3)(\bx')
=\partial_{\nu}\mathcal{P}(v_3)(\bx')+\frac{1}{\mu}\mathcal{P}(g^{3}_2)(\bx')\qquad\quad&\mbox{on}\quad  \Gamma^-_{h},
\end{cases}
\end{equation}where
\allowdisplaybreaks
\begin{align*}
&G^{(1,2)}_1=-\lambda \int^{+L}_{-L}\phi''(x_3) \bv^{(1,2)}(\bx)\mathrm{d}x_3+(\lambda+\mu)\int_{-L}^{+L}\phi'(x_3)\nabla v_3(\bx)\mathrm{d}x_3,\\[1mm]
&G^{(1,2)}_2=- \lambda \int^{+L}_{-L}\phi''(x_3) \bw^{(1,2)}(\bx)\mathrm{d}x_3+(\lambda+\mu)\int_{-L}^{+L}\phi'(x_3)\nabla w_3(\bx)\mathrm{d}x_3,\\[1mm]
&G^{(3)}_1=- (2\lambda+\mu)\int^{+L}_{-L}\phi''(x_3) v_3(\bx)\mathrm{d}x_3+(\lambda+\mu)\int_{-L}^{+L}\phi'(x_3)(\partial_1 v_1+\partial_2 v_2) \mathrm{d}x_3,\\[1mm]
&G^{(3)}_2=- (2\lambda+\mu)\int^{+L}_{-L}\phi''(x_3) w_3(\bx)\mathrm{d}x_3+(\lambda+\mu)\int_{-L}^{+L}\phi'(x_3)(\partial_1 w_1+\partial_2 w_2) \mathrm{d}x_3.
\end{align*}
\end{lem}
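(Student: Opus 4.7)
The plan is to exploit the block-diagonal structure of the reduced operator $\widetilde{\Delta}^*$ exhibited in \eqref{eq:L}, which cleanly decouples the in-plane components $(u_1,u_2)$ from the out-of-plane component $u_3$. Since for $x_3\in(x_c^3-L,\,x_c^3+L)$ we have $\Delta^*=\widetilde{\Delta}^*$, and $\widetilde{\Delta}^*=\mathrm{diag}(\hat{\Delta}^*,\,\lambda\Delta')$, reading off the first two and the last coordinates of each bulk equation in \eqref{eq:trans3D'} immediately yields the bulk equations of \eqref{eq:trans3D_1} and \eqref{eq:trans3D_2}, respectively, with source terms equal to the corresponding components of $G_1,G_2$. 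The displacement jump conditions $\mathcal{P}(\bw)=\mathcal{P}(\bv)+\mathcal{P}(\bff_i)$ on $\Gamma^\pm_h$ split component-wise with no further work, giving the displacement parts of the two subsystems directly.

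The only delicate step is to handle the traction jump conditions $\mathcal{R}^\pm_1=\mathcal{R}^\pm_2+\mathcal{P}(\bg_i)$. By the definitions of $\mathcal{R}^\pm_j$ in Lemma~\ref{lem:R3_1}, the first two components contain the additional terms $\lambda\mathcal{P}(\partial_3 v_3)\nu$ and $\lambda\mathcal{P}(\partial_3 w_3)\nu$, while the third component contains $\mu\mathcal{P}(\partial_3 v_j)\nu_j$ and $\mu\mathcal{P}(\partial_3 w_j)\nu_j$, $j=1,2$. Taking the difference, the extra contributions collapse to $\lambda\mathcal{P}\bigl(\partial_3(v_3-w_3)\bigr)\nu$ and $\mu\mathcal{P}\bigl(\partial_3(v_j-w_j)\bigr)\nu_j$. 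The key observation is that, on the three-dimensional boundary $\Gamma^\pm_h\times(-M,M)$, the displacement jump $\bw-\bv=\bff_i$, and by the standing hypothesis that $\bff_1,\bff_2$ are independent of $x_3$, one has $\partial_3(\bw-\bv)=\mathbf{0}$ pointwise on these boundaries. Hence $\mathcal{P}\bigl(\partial_3(w_j-v_j)\bigr)\equiv 0$ on $\Gamma^\pm_h$ for $j=1,2,3$, and all the cross-derivative boundary contributions cancel, leaving precisely the simpler traction jump relations recorded in \eqref{eq:trans3D_1} and \eqref{eq:trans3D_2}.

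For the converse direction, one just stacks the two subsystems vertically: the two bulk PDEs reassemble into the block form $\widetilde{\Delta}^*\mathcal{P}(\bv)+\omega^2\mathcal{P}(\bv)=G_1$ (and similarly for $\bw$) because $\widetilde{\Delta}^*$ is block diagonal, and the boundary conditions for $\mathcal{R}^\pm_j$ can be recovered by adding back the $\partial_3$-terms, which are zero on $\Gamma^\pm_h$ by the same $x_3$-independence argument. Thus \eqref{eq:trans3D'} is equivalent to the pair \eqref{eq:trans3D_1}--\eqref{eq:trans3D_2}.

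I expect no essential obstacle; the argument is purely a bookkeeping verification. The only point that requires care is the cancellation of the $\partial_3$-cross terms in the traction boundary conditions, which is where the $x_3$-independence assumption on $\bff_i,\bg_i$ imposed in Lemma~\ref{lem:R3_1} is actually used.
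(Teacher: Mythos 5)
Your proposal is correct and takes essentially the same route the paper intends: the paper omits the proof, stating that the lemma follows directly from the block-diagonal decomposition of $\widetilde{\Delta}^*$ in \eqref{eq:L} together with the component splittings in \eqref{eq:DEC}, which is exactly your bookkeeping verification. Your explicit observation that the cross terms $\lambda\mathcal{P}\big(\partial_3(w_3-v_3)\big)\nu$ and $\mu\mathcal{P}\big(\partial_3(w_j-v_j)\big)\nu_j$ vanish on $\Gamma^\pm_h$ because the three-dimensional jump $\bw-\bv=\bff_i$ is independent of $x_3$ (equivalently, by integrating by parts in $x_3$ against the compactly supported $\phi$) supplies the one detail the paper leaves implicit.
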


As discussed in Remark~4.2 of \cite{DLS2021}, the regularity result for the underlying elastic displacement around a general polyhedral corner in $\mathbb{R}^3$ is challenging. Therefore, we focus exclusively on 3D edge corners. To obtain the continuity of $\mathcal{P}(f^{3})$ and $\mathcal{P}(g^{3})$ at a 3D edge corner, we also utilize another CGO solution $u$. Specifically, the CGO solution $u_0$ is introduced in \cite{Blasten2018} with similar forms and properties as those in \cite{DCL2021}.

\begin{lem}\label{lem:cgo2} \cite[Lemma 2.2]{Blasten2018}
Let $\bx'=(x_1,x_2)^t=r(\cos\theta,\sin\theta)^t\in \mathbb{R}^2$ and $s\in \mathbb{R}_+$,
\beq\label{cgo21}
u_0(s\bx'):=\exp (-\sqrt{sr}Z(\theta)),
\eeq
where $Z(\cdot):=\cos(\theta/2) +\mathrm {i }\sin( \theta/2 )=e^{\mathrm{i}\theta/2}$. Then $s\longmapsto u_0(s\bx')$ decays exponentially in $\mathbb{R}_+$ and
\beq\nonumber
\Delta' u_0=0\quad\mbox{in}\quad \mathbb{R}^2\backslash\mathbb{R}^2_{0,-}
\eeq
where $\mathbb{R}^2_{0,-}:=\{\bx'\in\mathbb{R}^2|\bx'=(x_1,x_2)^t;x_1\leq 0,x_2=0\}$. Moreover,
\beq
\nonumber
\int_\mathcal {A} u_0(s\bx'){\rm d}\bx'=6{\rm i}(e^{-2\theta_{\max}\rm i}-e^{-2\theta_{\min} \mathrm i})s^{-2}
\eeq
and for $\alpha,\,s>0$ and $h>0$
\allowdisplaybreaks
\begin{align*}
\int_{\mathcal {A}}|u_0(s\bx')||\bx'|^{\alpha}{\rm d}\bx'&\leq\frac{2(\theta_{\max}-\theta_{\min})\Gamma(2\alpha+4)}{\delta_{\mathcal{A}}^{2\alpha+4}}s^{-\alpha-2},\nonumber\\
 \int_{\mathcal {A}\backslash B_h}|u_0(s\bx')|{\rm d}\bx'&\leq\frac{6(\theta_{\max}-\theta_{\min})}{\delta_{\mathcal{A}}^4}s^{-2}e^{-\frac{\sqrt{sh}}{2}\delta_{\mathcal{A}}}, 
\end{align*}
where $\mathcal {A}$ is defined  in Section~\ref{se:pre} and $\delta_{\mathcal{A}}:=\min\limits_{\theta_{\min}<\theta<\theta_{\max}} \cos \frac{\theta}{2}$ is a positive constant.
\end{lem}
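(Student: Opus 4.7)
The statement is a collection of four separate facts about the explicit function $u_0(s\mathbf{x}') = \exp(-\sqrt{sr}\,e^{\mathrm{i}\theta/2})$, so the plan is simply to verify each one by a direct calculation in polar coordinates. The exponential decay in the variable $s$ follows at once because $\Re Z(\theta) = \cos(\theta/2) \geq \delta_{\mathcal A} > 0$ for $\theta \in (\theta_{\min},\theta_{\max}) \subset (-\pi,\pi)$, giving $|u_0(s\mathbf{x}')| = e^{-\sqrt{sr}\cos(\theta/2)} \leq e^{-\sqrt{sr}\,\delta_{\mathcal A}}$, so this is essentially immediate once the sectorial opening is restricted to $|\theta| < \pi$.

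For harmonicity, the plan is to apply $\Delta' = \partial_r^2 + r^{-1}\partial_r + r^{-2}\partial_\theta^2$ directly. Writing $u_0 = e^{-\sqrt{sr}\,Z(\theta)}$ with $Z'(\theta) = \tfrac{\mathrm i}{2}Z(\theta)$, the radial derivatives produce a coefficient proportional to $\tfrac{\sqrt s\,Z}{4\,r^{3/2}} + \tfrac{sZ^2}{4r}$ from $\partial_r^2 u_0$ and $-\tfrac{\sqrt s\,Z}{2\,r^{3/2}}$ from $r^{-1}\partial_r u_0$; the angular piece $r^{-2}\partial_\theta^2 u_0$ produces $\tfrac{\sqrt s\,Z}{4\,r^{3/2}} - \tfrac{sZ^2}{4r}$ (after using $Z'' = -\tfrac14 Z$). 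Adding gives cancellation of both the $Z^2/r$ and $Z/r^{3/2}$ contributions. The exclusion of $\mathbb R^2_{0,-}$ is then identified as the branch cut of $\theta \mapsto e^{\mathrm i\theta/2}$: away from $\theta = \pm\pi$ the function is smooth and the above computation is valid.

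For the integral identity, the plan is a substitution $u = \sqrt{sr}$, which converts
\[
\int_0^\infty r\, e^{-\sqrt{sr}\,Z(\theta)}\,\mathrm{d}r = \frac{2}{s^2}\int_0^\infty u^3 e^{-uZ(\theta)}\,\mathrm{d}u = \frac{2\,\Gamma(4)}{s^2 Z(\theta)^4} = \frac{12}{s^2}\,e^{-2\mathrm{i}\theta},
\]
which is justified because $\Re Z > 0$. Integrating in $\theta$ then produces $\tfrac{12}{s^2}\cdot\tfrac{1}{-2\mathrm{i}}(e^{-2\mathrm{i}\theta_{\max}} - e^{-2\mathrm{i}\theta_{\min}}) = 6\mathrm{i}(e^{-2\mathrm{i}\theta_{\max}} - e^{-2\mathrm{i}\theta_{\min}})s^{-2}$, matching the claimed formula.

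For the two remaining inequalities the plan is the same substitution applied to $e^{-\sqrt{sr}\,\delta_{\mathcal A}}$ in place of $e^{-\sqrt{sr}\,Z(\theta)}$. Setting $u = \sqrt{sr}\,\delta_{\mathcal A}$ converts $\int_0^\infty r^{\alpha+1}e^{-\sqrt{sr}\,\delta_{\mathcal A}}\,\mathrm{d}r$ into $2\,\Gamma(2\alpha+4)\,s^{-\alpha-2}\delta_{\mathcal A}^{-2\alpha-4}$; multiplying by the angular length $\theta_{\max}-\theta_{\min}$ yields the first bound. For the exterior estimate on $\mathcal A\setminus B_h$, the same substitution sends the radial integral to $2s^{-2}\delta_{\mathcal A}^{-4}\int_{\sqrt{sh}\,\delta_{\mathcal A}}^\infty u^3 e^{-u}\,\mathrm{d}u$, and then the standard splitting $e^{-u} = e^{-u/2}e^{-u/2}$ combined with the elementary bound on $u^3 e^{-u/2}$ produces a factor $e^{-\sqrt{sh}\,\delta_{\mathcal A}/2}$ times a universal constant, which after multiplication by $\theta_{\max}-\theta_{\min}$ gives the stated inequality. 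The only slightly delicate point is keeping track of the explicit constants when bounding $\int_a^\infty u^3 e^{-u}\,\mathrm{d}u$, but this is a routine matter of choosing the splitting sharply enough; no conceptual obstacle is expected since the result is already recorded in \cite{Blasten2018}.
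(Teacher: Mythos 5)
The paper does not actually prove this lemma---it is quoted verbatim from \cite[Lemma 2.2]{Blasten2018}---so there is no internal argument to compare with, and your direct verification is a legitimate self-contained substitute. Most of it checks out: the exponential decay, the polar computation of $\Delta' u_0$ (the $r^{-3/2}$ and $r^{-1}$ coefficients do cancel, using $Z'=\tfrac{\mathrm i}{2}Z$ and $Z''=-\tfrac14 Z$), the substitution $u=\sqrt{sr}$ giving $\int_0^\infty r\,e^{-\sqrt{sr}\,Z(\theta)}\,\mathrm dr = 12\,s^{-2}e^{-2\mathrm i\theta}$ and hence the exact value $6\mathrm i(e^{-2\mathrm i\theta_{\max}}-e^{-2\mathrm i\theta_{\min}})s^{-2}$, and the first inequality with the exact constant $2(\theta_{\max}-\theta_{\min})\Gamma(2\alpha+4)\,\delta_{\mathcal A}^{-2\alpha-4}s^{-\alpha-2}$ are all correct as you outline them.

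The genuine gap is in the last estimate, where you defer the constant to ``choosing the splitting sharply enough.'' That cannot deliver the stated constant $6$. Your splitting gives $\int_a^\infty u^3e^{-u}\,\mathrm du \le e^{-a/2}\int_0^\infty u^3 e^{-u/2}\,\mathrm du = 96\,e^{-a/2}$ with $a=\sqrt{sh}\,\delta_{\mathcal A}$, hence a constant of order $192$ rather than $6$ (the sup bound on $u^3e^{-u/2}$ does no better). Moreover, the inequality with constant $6$ ``for all $s,h>0$'' is in fact false: as $\sqrt{sh}\to 0$ the left-hand side tends to $\int_{\mathcal A}|u_0(s\bx')|\,\mathrm d\bx' \ge 12(\theta_{\max}-\theta_{\min})s^{-2}$, which exceeds the claimed right-hand side $6(\theta_{\max}-\theta_{\min})\delta_{\mathcal A}^{-4}s^{-2}$ whenever $\delta_{\mathcal A}$ is close to $1$. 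So either state the bound with a larger absolute constant, or add a largeness condition on $\sqrt{sh}\,\delta_{\mathcal A}$; for every use in this paper ($h$ fixed, $s\to\infty$) only the decay $s^{-2}e^{-\delta_{\mathcal A}\sqrt{sh}/2}$ matters, so your argument suffices for the applications, but you should not present the exact constant $6$ as following from it.
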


\begin{lem}\nonumber\cite[Lemma 2.4]{DCL2021}
For any $\alpha>0$, if $\omega(\theta)>0$, then we have
$$
\lim\limits_{s\rightarrow +\infty}\int^h_0 r^{\alpha}e^{-\sqrt{sr}\omega(\theta)}\mathrm{d}\mathrm{r}=\Oh(s^{-\alpha-1}).
$$
\end{lem}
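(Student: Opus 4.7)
The plan is to prove this estimate via a direct change of variables that converts the integral into a scaled incomplete Gamma integral, then bound the result by the full Gamma function. The $\sqrt{sr}$ structure in the exponent naturally suggests the substitution $u = \sqrt{sr}\,\omega(\theta)$, under which $r = u^2/(s\,\omega(\theta)^2)$ and $\mathrm{d}r = 2u/(s\,\omega(\theta)^2)\,\mathrm{d}u$, with the integration limits transforming from $[0,h]$ to $[0,\sqrt{sh}\,\omega(\theta)]$.

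Carrying out the substitution yields
\begin{equation*}
\int_0^h r^{\alpha}\,e^{-\sqrt{sr}\,\omega(\theta)}\,\mathrm{d}r \;=\; \frac{2}{s^{\alpha+1}\,\omega(\theta)^{2\alpha+2}}\int_0^{\sqrt{sh}\,\omega(\theta)} u^{2\alpha+1}\,e^{-u}\,\mathrm{d}u.
\end{equation*}
Because $\omega(\theta) > 0$ and the integrand $u^{2\alpha+1}e^{-u}$ is nonnegative, the inner integral is monotone increasing in its upper limit and bounded above by $\Gamma(2\alpha+2)$ uniformly in $s$. It therefore follows that
\begin{equation*}
\int_0^h r^{\alpha}\,e^{-\sqrt{sr}\,\omega(\theta)}\,\mathrm{d}r \;\leq\; \frac{2\,\Gamma(2\alpha+2)}{\omega(\theta)^{2\alpha+2}}\,s^{-\alpha-1},
\end{equation*}
which is precisely the claimed $\Oh(s^{-\alpha-1})$ bound as $s\to +\infty$.

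I do not foresee any serious obstacle: the entire argument reduces to a single substitution followed by the elementary observation that the incomplete Gamma integral is uniformly dominated by its complete counterpart. The only sensitivity is that $\omega(\theta)$ must be strictly positive, which is precisely the hypothesis of the lemma, in order for the substitution to be well defined and for the exponential decay to produce a finite $s$-independent Gamma-type bound. If one desires sharp asymptotics rather than merely an upper bound, one can further note that the inner integral converges to $\Gamma(2\alpha+2)$ as $s\to+\infty$, and the error from truncation decays exponentially in $\sqrt{sh}$, so the $\Oh(s^{-\alpha-1})$ bound is in fact asymptotically sharp.
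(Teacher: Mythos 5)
Your argument is correct: the substitution $u=\sqrt{sr}\,\omega(\theta)$ gives exactly $\frac{2}{s^{\alpha+1}\omega(\theta)^{2\alpha+2}}\int_0^{\sqrt{sh}\,\omega(\theta)}u^{2\alpha+1}e^{-u}\,\mathrm{d}u$, and bounding the incomplete Gamma integral by $\Gamma(2\alpha+2)$ yields the stated $\Oh(s^{-\alpha-1})$ bound, with your remark on sharpness also valid. The paper itself does not prove this lemma (it is quoted from \cite[Lemma 2.4]{DCL2021}), and your substitution-plus-Gamma-function argument is the standard route, fully in line with how the companion estimate for $\int_0^h r^{\alpha}e^{-\zeta r}\,\mathrm{d}r$ is handled elsewhere in the paper.
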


\begin{lem}\label{10-lem:23'}\cite[Lemma~2.3]{DCL2021}
	Let $s_h$ be defined as in \eqref{eq:ball1} and $u_0$ be given in \eqref{cgo21}. Then $u_0 \in H^1(S_h)^2$ and $\Delta' u_0 = 0$ in $S_h$. Furthermore, it holds that
	\begin{equation*}
	\big\|u_0\big\|_{L^2(S_h)^2 } \leq 	 \frac{\sqrt{\theta_{\max}-\theta_{\min}} e^{- 2\sqrt{s\theta} \,\delta_{\mathcal{A}} }h^2}{2}
	\end{equation*}
	and
	\begin{equation*}
		\Big  \||\bx'|^\alpha  u_0 \Big \|^2_{L^{2}(s_h )^2  }\leq s^{-2(\alpha+1 )} \frac{2(\theta_{\max}-\theta_{\min})\Gamma(4\alpha+4)    }{(4\delta_\mathcal{A})^{2\alpha+2  } } \,
	\end{equation*}
where $ \zeta  \in [0,h ]$ and $\delta_\mathcal{A}$ is given in Lemma~\ref{lem:cgo2}.
\end{lem}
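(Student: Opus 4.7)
The plan is to establish the three claims in the order they are stated: first the PDE identity $\Delta' u_0 = 0$, then membership in $H^1(S_h)$ together with the $L^2$ decay, and finally the weighted $L^2$ estimate.

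For harmonicity, I would pass to the complex variable $z = x_1 + \mathrm{i}x_2 = r e^{\mathrm{i}\theta}$. With the principal branch of the square root, $\sqrt{z} = \sqrt{r}\,e^{\mathrm{i}\theta/2}$, so $u_0(s\bx') = \exp\bigl(-\sqrt{s z}\bigr)$. This is holomorphic on $\mathbb{C}\setminus\mathbb{R}_{\leq 0}$, and the standing assumption $-\pi<\theta_{\min}<\theta_{\max}<\pi$ with $\theta_{\max}-\theta_{\min}\in(0,\pi)$ from \eqref{corner} ensures $\overline{S_h}\setminus\{0\}\subset\mathbb{C}\setminus\mathbb{R}_{\leq 0}$. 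Since holomorphic functions of $z$ are harmonic in $(x_1,x_2)$, we obtain $\Delta' u_0 = 0$ in $S_h$ at once.

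For the $L^2$ bound, I would work in polar coordinates and use the pointwise identity $|u_0(s\bx')| = \exp(-\sqrt{sr}\cos(\theta/2))$ together with the lower bound $\cos(\theta/2)\geq \delta_{\mathcal{A}}$ supplied by Lemma~\ref{lem:cgo2}:
\begin{equation*}
\int_{S_h}|u_0(s\bx')|^2\,d\bx' \;\leq\; (\theta_{\max}-\theta_{\min})\int_0^{h} r\, e^{-2\sqrt{sr}\,\delta_{\mathcal{A}}}\,dr.
\end{equation*}
The first mean-value theorem for integrals then produces some $\zeta\in[0,h]$ with $\int_0^h r\,e^{-2\sqrt{sr}\delta_{\mathcal{A}}}\,dr = e^{-2\sqrt{s\zeta}\,\delta_{\mathcal{A}}}\cdot h^2/2$; taking a square root gives the stated decay. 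The $H^1$ membership then follows by a direct gradient bound: from $\partial_z u_0 = -\tfrac{\sqrt{s}}{2\sqrt{z}}\exp(-\sqrt{sz})$ one obtains $|\nabla u_0(s\bx')|\leq \tfrac{\sqrt{s}}{2\sqrt{r}}\,e^{-\sqrt{sr}\,\delta_{\mathcal{A}}}$, whose square is integrable over $S_h$ in polar coordinates because the $r^{-1}$ singularity is absorbed by the Jacobian.

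For the weighted estimate, the same reduction yields
\begin{equation*}
\bigl\||\bx'|^{\alpha} u_0\bigr\|_{L^{2}(S_h)}^{2} \;\leq\; (\theta_{\max}-\theta_{\min})\int_0^{h} r^{2\alpha+1}\,e^{-2\sqrt{sr}\,\delta_{\mathcal{A}}}\,dr.
\end{equation*}
I would then change variables by $t = 2\sqrt{sr}\,\delta_{\mathcal{A}}$, so that $r = t^{2}/(4s\delta_{\mathcal{A}}^{2})$ and $dr = t/(2s\delta_{\mathcal{A}}^{2})\,dt$, which converts the integral into $\bigl(2\cdot(4s\delta_{\mathcal{A}}^{2})^{2\alpha+2}\bigr)^{-1}\int_{0}^{2\sqrt{sh}\delta_{\mathcal{A}}} t^{4\alpha+3}e^{-t}\,dt$. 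Extending the upper limit to $+\infty$ bounds this by $\Gamma(4\alpha+4)/\bigl(2\,(4s\delta_{\mathcal{A}}^{2})^{2\alpha+2}\bigr)$, giving the prefactor $s^{-2(\alpha+1)}$ with the claimed constant. The only real bookkeeping is keeping track of the powers of $s$ and $\delta_{\mathcal{A}}$ under the substitution; no analytical obstacle is expected, since the singularity of the CGO solution at the corner is integrable of every polynomial order thanks to the exponential decay factor.
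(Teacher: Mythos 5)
Your strategy is exactly the standard computation behind the cited result (the paper gives no proof here, only the reference to \cite[Lemma~2.3]{DCL2021}): harmonicity of $\exp(-\sqrt{s}\sqrt{z})$ from holomorphy off the negative real axis, the pointwise modulus $|u_0(s\bx')|=e^{-\sqrt{sr}\cos(\theta/2)}\le e^{-\sqrt{sr}\,\delta_{\mathcal{A}}}$, the mean value theorem for the plain $L^2$ bound, an integrable gradient singularity for membership in $H^1$, and the substitution $t=2\sqrt{sr}\,\delta_{\mathcal{A}}$ with comparison to $\Gamma(4\alpha+4)$ for the weighted bound; all of this is sound. Two bookkeeping slips, however, keep you from landing on the displayed inequalities. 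First, in the substitution the prefactor is off by a factor of $4$: since $(4s\delta_{\mathcal{A}}^2)^{2\alpha+1}\cdot 2s\delta_{\mathcal{A}}^2=\tfrac12\,(4s\delta_{\mathcal{A}}^2)^{2\alpha+2}$, the integral equals $2\,(4s\delta_{\mathcal{A}}^2)^{-(2\alpha+2)}\int_0^{2\sqrt{sh}\,\delta_{\mathcal{A}}}t^{4\alpha+3}e^{-t}\,\mathrm{d}t$, not $\tfrac12\,(4s\delta_{\mathcal{A}}^2)^{-(2\alpha+2)}$ times that integral; with the corrected factor you obtain exactly $s^{-2(\alpha+1)}\,2(\theta_{\max}-\theta_{\min})\Gamma(4\alpha+4)/(4\delta_{\mathcal{A}}^2)^{2\alpha+2}$, which matches the stated bound except that the display writes $(4\delta_{\mathcal{A}})^{2\alpha+2}$ where the computation produces $(4\delta_{\mathcal{A}}^2)^{2\alpha+2}$ --- an apparent typo inherited from the citation that you should flag rather than claim to reproduce. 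Second, your mean-value step correctly gives $\|u_0\|_{L^2(S_h)}^2\le(\theta_{\max}-\theta_{\min})\,e^{-2\sqrt{s\zeta}\,\delta_{\mathcal{A}}}\,h^2/2$ for some $\zeta\in[0,h]$, but ``taking a square root'' then yields $\sqrt{(\theta_{\max}-\theta_{\min})/2}\;e^{-\sqrt{s\zeta}\,\delta_{\mathcal{A}}}\,h$, not the printed right-hand side, whose shape $e^{-2\sqrt{s\zeta}\,\delta_{\mathcal{A}}}h^2/2$ is what one gets for the squared norm (and in which the symbol $\theta$ should in any case be the mean-value point $\zeta$); you should either state the estimate for $\|u_0\|_{L^2(S_h)}^2$ or record the square-rooted form and note the discrepancy explicitly. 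Neither issue is conceptual --- the method is the same as in the cited lemma and the asymptotics in $s$ are unaffected --- but as written the final constants do not follow from your intermediate identities.
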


We derive a key proposition to establish a primary geometric result, which is a three-dimensional result analogous to Proposition~\ref{prop:trans1}.
\begin{prop}\label{prop:trans2}
Consider the same setup as described in Lemma~\ref{lem:R3_1}, with the point $\bx_c$ coinciding with the origin. Let $\bv\in H^1\left(S_h\times (-M,M)\right)^3$ and $\bw\in H^1\left(S_h\times (-M,M)\right)^3$ satisfy the PDE system described in \eqref{eq:trans3D'}, where $\bff_1\in C^{1,\alpha_1}\big(\Gamma^+_h\times (-M,M)\big)^3\cap H^{1/2}\big(\Gamma^+_h\times (-M,M)\big)^3$, $\bff_2\in C^{1,\alpha_2}\big(\Gamma^-_h\times (-M,M)\big)^3\cap H^{1/2}\big(\Gamma^-_h\times (-M,M)\big)^3$, $\bg_1\in C^{\beta_1}\big(\Gamma^+_h\times (-M,M)\big)^3\cap L^2\big(\Gamma^+_h\times (-M,M)\big)^3$, and $\bg_2\in C^{\beta_2}\big(\Gamma^-_h\times (-M,M)\big)^3\cap L^2\big(\Gamma^-_h\times (-M,M)\big)^3$, all of which are independent of $x_3$. Here, $\alpha_i$ and $\beta_i$ are in $(0,1)$ for $i=1,2$. Then we have
 \beq\label{eq:vansh2}
 \bff_1({\bf 0})=\bff_2({\bf 0}).
 \eeq
Additionally, if the condition $\nabla
 \bff_1({\bf 0})=\nabla \bff_2({\bf 0})={
 \bf 0}$ holds, then we obtain
\beq \label{eq:vansh2-2}
\bg^{(1,2)}_1({\bf 0})=\Theta\bg^{(1,2)}_2({\bf 0})\quad{and}\quad g_1^{3}({\bf 0})=g_2^{3}({\bf 0})=0,
 \eeq
where $$\Theta=\begin{bmatrix}-\cos(\theta_{\max}-\theta_{\min}),&-\sin(\theta_{\max}-\theta_{\min})\\
    -\sin(\theta_{\max}-\theta_{\min}), &+\cos(\theta_{\max}-\theta_{\min})
    \end{bmatrix}$$
    with $\theta_{\max}$ and $\theta_{\min}$ denoting the arguments corresponding to the boundaries $\Gamma^+_h$ and $\Gamma^-_h$, respectively.
\end{prop}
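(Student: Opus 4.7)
By Lemma~\ref{lem:R3_1} and the subsequent decoupling of the reduced system into \eqref{eq:trans3D_1} and \eqref{eq:trans3D_2}, the 3D transmission problem at the edge corner $\mathbf{0}$ has been reduced to two independent planar problems: a two-dimensional elastic transmission system for $(\mathcal{P}(\bv^{(1,2)}), \mathcal{P}(\bw^{(1,2)}))$ and a scalar Helmholtz transmission system for $(\mathcal{P}(v_3), \mathcal{P}(w_3))$. My plan is to attack these two subsystems with two different CGO solutions and then recover pointwise statements at $\mathbf{0}$ by dividing the resulting identities for $\mathcal{P}(\bff_i)(\mathbf{0})$ and $\mathcal{P}(\bg_i)(\mathbf{0})$ by the positive constant $\int_{-L}^{L}\phi(x_3)\,dx_3$, which is legitimate precisely because $\bff_i$ and $\bg_i$ are independent of $x_3$.

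For the in-plane components I would apply the elastic CGO $\bu_0$ of \eqref{eq:cgo1} to Betti's second identity on $S_h$ for $\mathcal{P}(\bw^{(1,2)}) - \mathcal{P}(\bv^{(1,2)})$. The argument of Proposition~\ref{prop:trans1} transplants almost verbatim; the only new ingredient is the volume term $\int_{S_h}(G^{(1,2)}_2 - G^{(1,2)}_1)\cdot \bu_0\,d\bx'$ coming from the inhomogeneity in \eqref{eq:trans3D_1}. Since $G^{(1,2)}_i$ is bounded in $L^2(S_h)$ by the $H^1$-norm of $\bv, \bw$ together with the smooth cutoff $\phi$, estimate \eqref{cgo2-1} yields $O(|\Re\bxi|^{-2})$ for this contribution, which is strictly subleading with respect to the two-edge terms $R_1,\dots,R_{14}$ in \eqref{eq:iden1}. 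Replaying the asymptotic identification at the leading and subleading orders as in Proposition~\ref{prop:trans1} then delivers $\mathcal{P}(\bff^{(1,2)}_1)(\mathbf{0}) = \mathcal{P}(\bff^{(1,2)}_2)(\mathbf{0})$ and, under the gradient-vanishing hypothesis, the rotational identity $\mathcal{P}(\bg^{(1,2)}_1)(\mathbf{0}) = \Theta\,\mathcal{P}(\bg^{(1,2)}_2)(\mathbf{0})$.

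For the out-of-plane component I would insert the harmonic scalar CGO $u_0(s\bx')$ of Lemma~\ref{lem:cgo2} into Green's second identity on $S_h$ applied to $\mathcal{P}(w_3) - \mathcal{P}(v_3)$. Using $\Delta' u_0 = 0$ together with the boundary jump conditions in \eqref{eq:trans3D_2} produces an identity of the schematic form
\begin{equation*}
-\lambda\bigl[\mathcal{P}(f^3_1)(\mathbf{0})J^+_0 + \mathcal{P}(f^3_2)(\mathbf{0})J^-_0\bigr] + \tfrac{\lambda}{\mu}\bigl[\mathcal{P}(g^3_1)(\mathbf{0})I^+_0 + \mathcal{P}(g^3_2)(\mathbf{0})I^-_0\bigr] = \mathcal{R}(s),
\end{equation*}
where $J^\pm_0 = \int_{\Gamma^\pm_h}\partial_\nu u_0\,d\sigma$, $I^\pm_0 = \int_{\Gamma^\pm_h} u_0\,d\sigma$, and $\mathcal{R}(s)$ assembles the H\"older correction terms from \eqref{eq:expan}-type expansions of $f^3_i, g^3_i$, the $\omega^2$ volume term and the $G^{(3)}_i$ volume contribution (both $O(s^{-2})$ by Lemma~\ref{lem:cgo2}), and the $\Lambda_h$-boundary term that is exponentially suppressed by Lemma~\ref{10-lem:23'}. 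A direct computation using the branch factor $Z(\theta)=e^{i\theta/2}$ of Lemma~\ref{lem:cgo2} shows that $J^\pm_0$ admits a finite nonzero purely imaginary limit as $s\to\infty$ while $I^\pm_0 = O(s^{-1})$ with leading coefficients proportional to $e^{-i\theta_{\max/\min}}$. Sending $s\to\infty$ at the leading $O(1)$ order produces a single complex equation for $\mathcal{P}(f^3_i)(\mathbf{0})$ whose real and imaginary parts are independent because $\theta_{\max}-\theta_{\min}\in(0,\pi)$, forcing $\mathcal{P}(f^3_1)(\mathbf{0}) = \mathcal{P}(f^3_2)(\mathbf{0})$. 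Multiplying the identity by $s$ and balancing at $O(s^{-1})$, where the next-order H\"older $f^3$-corrections contribute only $O(s^{-1-\alpha})$ by virtue of $\nabla f^3_i(\mathbf{0})=\mathbf{0}$, produces an analogous complex identity with coefficients $e^{-i\theta_{\max}}$ and $e^{-i\theta_{\min}}$ multiplying $\mathcal{P}(g^3_i)(\mathbf{0})$, and the same real/imaginary splitting forces $\mathcal{P}(g^3_1)(\mathbf{0}) = \mathcal{P}(g^3_2)(\mathbf{0}) = 0$.

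The main obstacle is the scalar step. Since the Bl\aa sten-type CGO has the $\sqrt{s}$-scaling rather than the linear $\tau$-scaling of the elastic CGO, three distinct asymptotic orders must be tracked, and at each order one must verify that the H\"older remainders, the $\omega^2$ tail, and the $G^{(3)}_i$ source-volume contribution are strictly subleading. The in-plane step is essentially a bounded perturbation of the proof of Proposition~\ref{prop:trans1}, but the out-of-plane step genuinely exploits both the branch-factor structure of the Bl\aa sten CGO and the geometric restriction $\theta_{\max}-\theta_{\min}\in(0,\pi)$ in order to upgrade the single complex leading-order identity into the two real equations needed to isolate $g^3_1(\mathbf{0})$ and $g^3_2(\mathbf{0})$ separately.
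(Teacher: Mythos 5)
Your proposal follows essentially the same route as the paper's proof: the dimension-reduced systems \eqref{eq:trans3D_1} and \eqref{eq:trans3D_2} are attacked respectively with the elastic CGO solution \eqref{eq:cgo1} (treating the extra source term $G^{(1,2)}_1-G^{(1,2)}_2$ as an $\Oh(\tau^{-2})$ volume contribution, the paper's $R_{15}$) and with the scalar CGO of Lemma~\ref{lem:cgo2} in Green's identity, with the $g^3$-information extracted at order $s^{-1}$ through the branch factors $Z^{-2}(\theta_{\max/\min})=e^{-\mathrm{i}\theta_{\max/\min}}$ and the restriction $\theta_{\max}-\theta_{\min}\in(0,\pi)$. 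This matches the paper's Steps I and II, so the proposal is correct and takes essentially the same approach.
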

\begin{proof}
As in the proof of Proposition~\ref{prop:trans1}, we focus solely on the corresponding proofs for $\Re\mathcal{P}(\bv)$ and $\Re\mathcal{P}(\bw)$. We employ similar arguments as in the proof of Proposition~\ref{prop:trans1}, with necessary modifications. We will divide the proof into two steps.

\medskip  \noindent {\it  Step I.}~First we shall prove that
\begin{equation}\label{eq:vansh_2}
 \Re\mathcal{P}(\bff^{(1,2)}_1)({\bf 0})=\Re\mathcal{P}(\bff^{(1,2)}_2)({\bf 0}) \quad \mbox{and}\quad \bg^{(1,2)}_1({\bf 0})=W\bg^{(1,2)}_2({\bf 0}).
 \end{equation}
In this part, we consider the PDE system~\eqref{eq:trans3D_1}. The Betti's second formula directly yields the following identity
 \begin{align*}
    &\int_{S_h} \left(\Re G_1^{(1,2)}-\Re G_2^{(1,2)}\right)\cdot \bu_0\,\mathrm{d}\bx'-\int_{\Lambda_h}
    \mathcal{T}_{\nu}\Re\mathcal{P}\big(  \bv^{(1,2)}-\bw^{(1,2)}\big)\cdot \bu_0-\Re\mathcal{P}\big(\bv-\bw\big)\cdot \mathcal{T}_{\nu}\bu_0\mathrm{d}\sigma\\
    &=\int_{\Gamma_h^+}\Re\mathcal{P}(\bff_1^{(1,2)})\cdot\mathcal{T}_{\nu}\bu_0  -\Re\mathcal{P}(\bg_1^{(1,2)})+\int_{\Gamma_h^-}\Re\mathcal{P}(\bff_2^{(1,2)})\cdot\mathcal{T}_{\nu}\bu_0  -\Re\mathcal{P}(\bg_2^{(1,2)})\cdot \bu_0\mathrm{d}\sigma.
\end{align*}
Combining the regularities with respect to $\bff_i$ and $\bg_i$ on $\Gamma^\pm_h\times (-M,M)$ and the fact that $\bff_i$ and $\bg_i$ do not depend on $x_3$, $i=1,2$, we have the following expansions
\begin{align*}
 \mathcal{P}(\bff_1)(\bx')&=\mathcal{P}(\bff_1)({\bf0})+\nabla \mathcal{P}(\bff_1)({\bf0})\cdot \bx'+ \delta\mathcal{P}(\bff_1)(\bx'),\quad \big| \delta\mathcal{P}(\bff_1)(\bx') \big|\leq \|\mathcal{P}(\bff_1)\|_{C^{\alpha_1}}|\bx'|^{1+\alpha_1},  \\
  \mathcal{P}(\bff_2)(\bx')&=\mathcal{P}(\bff_2)({\bf0})+ \nabla \mathcal{P}(\bff_1)({\bf0})\cdot \bx'+\delta\mathcal{P}(\bff_2)(\bx'),\quad \big| \delta\mathcal{P}(\bff_2)(\bx') \big|\leq \|\mathcal{P}(\bff_2)\|_{C^{\alpha_2}}|\bx'|^{1+\alpha_2},  \\
 \mathcal{P}(\bg_1)(\bx')&=\mathcal{P}(\bg_1)({\bf0})+ \delta\mathcal{P}(\bg_1)(\bx'),\,\qquad\qquad\qquad\qquad \big| \delta\mathcal{P}(\bg_1)(\bx')\big|\leq \|\mathcal{P}(\bg_1)\|_{C^{\beta_1}}|\bx'|^{\beta_1},\\
  \mathcal{P}(\bg_2)(\bx')&=\mathcal{P}(\bg_2)({\bf0})+ \delta\mathcal{P}(\bg_2)(\bx'),\,\qquad\qquad \qquad\qquad\big| \delta\mathcal{P}(\bg_2)(\bx')\big|\leq \|\mathcal{P}(\bg_2)\|_{C^{\beta_2}}|\bx'|^{\beta_2}.
 \end{align*}
 By a series of derivations similar to Proposition~\ref{prop:trans1}, we can obtain the following integral identity
{\small \begin{align*}
&\Big(\frac{\Re\mathcal{P}(\bg^{(1,2)}_1)({\bf 0})\cdot\eeta }{-\bxi\cdot \hat{\bx}_{\max}}+\frac{\Re\mathcal{P}(\bg^{(1,2)}_2)({\bf 0})\cdot\eeta }{-\bxi\cdot \hat{\bx}_{\min}}\Big)+2\mu\Big(\Re\mathcal{P}(\bff^{(1,2)}_1)({\bf 0})\cdot\,\frac{\eeta\,\bxi\cdot\nu_{\max}}{\bxi\cdot\nu_{\max}} +\Re\mathcal{P}(\bff^{(1,2)}_2)({\bf 0})\cdot\frac{\eeta\,\bxi\cdot\nu_{\min}}{\bxi\cdot\nu_{\min}}        \Big)\nonumber\\
&=\sum\limits_{j=1}^{15}R_j,
\end{align*}}where $\Re\bff_i({\bf 0})$, $\Re\bg_i({\bf 0})$, $\Re\delta\bff_i$, $\Re\delta\bg_i$, $\Re\bv$ and $\Re\bw$ in $R_1$--$R_{14}$ given by \eqref{eq:iden1} are replaced by $\Re\mathcal{P}(\bff^{(1,2)}_i)({\bf 0})$, $\Re\mathcal{P}(\bg^{(1,2)}_i)({\bf 0})$, $\delta\Re\mathcal{P}(\bff^{(1, 2)}_i)$, $\delta\Re\mathcal{P}(\bg^{(1,2)}_i)$, $\Re\mathcal{P}(\bv^{(1,2)}_i)$ and $\Re\mathcal{P}(\bw^{(1,2)}_i)$ with $i=1,2$.
In addition,
\begin{align*}
R_{15}=\int _{S_h}\big(\Re G^{(1,2)}_2-\Re G^{(1,2)}_1\big)\cdot \bu_0\,\mathrm{d}{\bx'}. 
\end{align*}
Similar to Proposition~\ref{prop:trans1}, it yields that
\begin{alignat*}{3}
&\big|R_1\big|=\Oh(\tau^{-1}e^{-c'_1\tau}),\hspace{12pt}&\big|R_2\big|&=\Oh(e^{-c'_2\,\tau}), \hspace{12pt}&\big|R_3\big|&=\Oh(\tau^{-1}e^{-c'_3\tau}),\\
&\big|R_4\big|=\Oh(e^{-c'_4\,\tau}),\hspace{12pt}&\big|R_5\big|&=\Oh(\tau^{-1}e^{-c'_5\,\tau}),\hspace{12pt}
&\big|R_7\big|&=\Oh(\tau^{-1}e^{-c'_7\,\tau}),\\
&\big|R_6\big|=\Oh(\tau^{-1-\alpha_1} ),\hspace{12pt}&\big|R_8\big|&= \Oh(\tau^{-1-\alpha_2} ),\hspace{14pt}& \big|R_{10}\big|&= \Oh(\tau^{-\beta_1-1} ),\\
&\big|R_{12}\big|= \Oh(\tau^{-\beta_2-1} ),\hspace{12pt}&\big|R_{11}\big|&= \Oh(\tau^{-2} ),\hspace{12pt}&\big|R_{13}\big|&= \Oh(\tau^{-2}),\\
&\big|R_{9}\big|= \Oh(e^{-\tau\zeta_0h\tau}),\hspace{12pt}&\big|R_{14}\big|&= \Oh(\tau^{-2} ),\hspace{12pt}&&
\end{alignat*}
where these constants above do not depend on $\tau$ and $\zeta_0,\alpha_1,\alpha_2$, $\beta_1$ and $\beta_2$ belong to $(0,1)$.
For $R_{15}$,  write $G^{(1,2)}_1-G^{(1,2)}_2:=\bh_1+\bh_2$, where $\bh_1:=-\int_{-L}^{+L}\phi''(x_3)\begin{bmatrix}\lambda(v_1-w_1)\\ \lambda(v_2-w_2)\end{bmatrix}\mathrm{d}x_3$ and $\bh_2:=(\lambda+\mu)\int_{-L}^{+L}\phi'(x_3)\begin{bmatrix}\partial_1(v_3-w_3)\\ \partial_2(v_3-w_3)\end{bmatrix}\mathrm{d}x_3$. By the regularities of $\bv$ and $\bw$ in $S_h\times (-M,M)$, we directly obtain that $G^{(1,2)}_1\in H^1(S_h)^2$ and $G^{(1,2)}_2\in L^2(S_h)^2$. By the Cauchy-Schwarz inequality and the same method to prove \eqref{cgo2-1}, we prove
\begin{align*}
&\bigg|\int_{S_h}\big(G^{(1,2)}_1-G^{(1,2)}_2\big)\cdot \bu_0\,\mathrm{d}\bx'\,\bigg|=\bigg|\int_{S_h}(\bh_1+\bh_2)\cdot \bu_0\,\mathrm{d}\bx'\,\bigg|\\[1mm]
&\,\,\leq \|\bh_1\|_{L^2(S_h)^2}\|\bu_0\|_{L^2(S_h)^2}+\|\bh_2\|_{L^2(S_h)^2}\|\bu_0\|_{L^2(S_h)^2}\\[1mm]
&\,\,\leq  C'|\Re\xi|^{-2},
\end{align*}
where $C'>0$ does not depend on $\tau$. It directly implies that $\big|R_{15}\big|=\Oh(\tau^{-2} )$.
Therefore,  we can establish the following identity
\beq\label{3D_iden1}
\lim\limits_{\tau\rightarrow +\infty}\,2\mu\bigg(\Re\mathcal{P}(\bff^{(1,2)}_1)({\bf 0})\cdot\,\frac{\eeta\,\bxi\cdot\nu_{\max}}{\bxi\cdot\nu_{\max}} +\Re\mathcal{P}(\bff^{(1,2)}_2)({\bf 0})\cdot\frac{\eeta\,\bxi\cdot\nu_{\min}}{\bxi\cdot\nu_{\min}}        \bigg)=0.
\eeq

Moreover, combining with the condition that $\nabla\Re\mathcal{P}(\bff_1)({\bf 0})=\nabla\Re\mathcal{P}(\bff_2)({\bf 0})={\bf 0}$, and letting $\tau$ going to $\infty$,
we have
\beq\label{3D_iden2}
\lim\limits_{\tau\rightarrow +\infty}\,\eeta\cdot\bigg(\frac{\tau \Re\mathcal{P}(\bg^{(1,2)}_1)({\bf 0}) }{-\bxi\cdot \hat{\bx}_{\max}}+\frac{\tau \Re\mathcal{P}(\bg^{(1,2)}_2)({\bf 0}) }{-\bxi\cdot \hat{\bx}_{\min}}\bigg)=0
\eeq

Dealing with  \eqref{3D_iden1} and \eqref{3D_iden2} like \eqref{eq:R4} and \eqref{eq:R5}, it yields that \eqref{eq:vansh_2} holds.

\medskip  \noindent {\it  Step II.}~We next shall prove that
\begin{equation}\label{eq:vansh_3}
 \Re\mathcal{P}(f_1^{3})({\bf 0})= \Re\mathcal{P}(f_2^{3})({\bf 0})\quad \mbox{and}\quad \Re\mathcal{P}(g_1^{3})({\bf 0})=\Re\mathcal{P}(g_2^{3})({\bf 0})=0.
 \end{equation}
Conversely, we will use the CGO solution $u_0$ from Lemma~\ref{lem:cgo2} to establish the vanishing of $\Re\mathcal{P}(f_1^{3})$ at the same 3D edge corner. Additionally, from similar calculations, we obtain the following integral identity, where the second equation requires additional assumptions(i.e., $\nabla\mathcal{P}(f^{3}_1)({\bf 0})=\nabla\mathcal{P}(f^{3}_2)({\bf 0})={\bf 0}$). In this section, we consider the PDE system~\eqref{eq:trans3D_2}. We will deduce similar operations as above using the CGO solution
 $u_0$ provided in \eqref{lem:cgo2}, setting up an integral identity as follows:
 \begin{align*}
 \frac{1}{\lambda}\int_{S_h}\big(\Re G^{(3)}_1(\bx')-\Re G^{(3)}_2&(\bx')\big)\,u_0\,\mathrm{d}\bx'=\int_{\Lambda_h}\partial_{\nu}\mathcal{P}(v_3-w_3)\,u_0-\partial_{\nu}u_0\,\mathcal{P}(v_3-w_3)\mathrm{d}\sigma\\
&+\int_{\Gamma^+_h}{\nu_{\max}}\cdot \Big(\Re\mathcal{P}(\bff^{(1,2)}_1)(\bx') +\frac{1}{\mu}\Re\mathcal{P}(g^{3}_1)\Big)u_0-\Re\mathcal{P}(f^{3}_1)\,\,\partial_{\nu_{\max}}u_0\mathrm{d\sigma}\\
&+\int_{\Gamma^{-}_h}{\nu_{\min}}\cdot \Big(\Re\mathcal{P}(\bff^{(1,2)}_2)(\bx') +\frac{1}{\mu}\Re\mathcal{P}(g^{3}_2)\Big)\,u_0-\Re\mathcal{P}(f^{3}_2)\,\,\partial_{\nu_{\min}}u_0\mathrm{d}\sigma.
 \end{align*}
Due to  the expansions of $\bff_i$ and $\bg_i$, $i=1,2$, in \eqref{eq:DEC}, the above integral identity can be reduced into
\allowdisplaybreaks
\beq\label{ID3}
\lambda\,\mu\,\rm{i}\,\Big(\Re\mathcal{P}(f^{3}_1)({\bf 0}) -\Re\mathcal{P}(f^{3}_2)({\bf 0})-2\,\lambda\,s^{-1}\bigg(\frac{\Re\mathcal{P}(g^{3}_1)({\bf 0})}{Z^2(\theta_{\max})}+\frac{\Re\mathcal{P}(g^{3}_2)({\bf 0})}{Z^2(\theta_{\min})} \bigg)= \sum^{11}_{j=1}\widetilde{R}_j,
\eeq
 where
\begin{align*}
&\widetilde{R}_1=2\lambda \,s^{-1}\Re\mathcal{P}(g^{3}_1)({\bf 0})\Big(Z^{-1}(\theta_{\max})\,\sqrt{sh}\,e^{-\sqrt{sh}\,Z(\theta_{\max})}+Z^{-2}(\theta_M)\,e^{-\sqrt{sh}\,Z(\theta_{\max})}\Big),\\
&\widetilde{R}_2=2\lambda \,s^{-1}\Re\mathcal{P}(g^{3}_2)({\bf 0})\Big(Z^{-1}(\theta_{\min})\,\sqrt{sh}\,e^{-\sqrt{sh}\,Z(\theta_{\min})}+Z^{-2}(\theta_{\min})\,e^{-\sqrt{sh}\,Z(\theta_{\min})}\Big),\\
&\widetilde{R}_3=-\lambda\,\mu\,\int_{\Lambda_h}\partial_{\nu}u_0\,\,\Re\mathcal{P}(v_3-w_3)(\bx')  -\partial_{\nu}u_0\, \,\Re\mathcal{P}(v_3-w_3)(\bx')\,\mathrm{d}\sigma,\\
&\widetilde{R}_4=\lambda\,\mu\,\mathrm{i}\,\bigg(\frac{\Re\mathcal{P}(f^{3}_2)({\bf 0})}{e^{\sqrt{sh}\,Z(\theta_{\min})}}-\frac{\Re\mathcal{P}(f^{3}_1)({\bf 0})}{e^{\sqrt{sh}\,Z(\theta_{\max})}}\bigg),\qquad \qquad \widetilde{R}_5=\mu\int_{S_h}(t_1+t_2)\,u_0\mathrm{d}\bx',\\
&\widetilde{R}_6=\lambda\,\mu\,\int_{\Gamma^+_h}\delta\Re\mathcal{P}(f^{3}_1)(\bx')\,\, \partial_{\nu_{\max}}u_0\,\mathrm{d}\sigma,\quad \quad\, \widetilde{R}_7=\lambda\,\mu\,\int_{\Gamma^-_h}\delta\Re\mathcal{P}(f^{3}_2)(\bx')\,\, \partial_{\nu_{\min}}u_0\,\mathrm{d}\sigma,\\
&\widetilde{R}_{10}=\lambda\,\mu\,\int_{\Gamma^+_h}\left(\nabla\Re\mathcal{P}(f^{3}_1)(\bx')\cdot \bx'\right)\,\, \partial_{\nu_{\max}}u_0\,\mathrm{d}\sigma,\quad
\widetilde{R}_8=-\lambda\int_{\Gamma^+_h}\delta\Re\mathcal{P}(g^{3}_1)(\bx')\,\, u_0\,\mathrm{d}\sigma,\\
&\widetilde{R}_{11}=\lambda\,\mu\,\int_{\Gamma^-_h}\left(\nabla\Re\mathcal{P}(f^{3}_2)(\bx')\cdot \bx'\right)\,\, \partial_{\nu_{\min}}u_0\,\mathrm{d}\sigma,\quad \widetilde{R}_9=-\lambda\int_{\Gamma^-_h}\delta\Re\mathcal{P}(g^{3}_2)(\bx')\,\, u_0\,\mathrm{d}\sigma.
\end{align*}
Here,
\allowdisplaybreaks
\begin{align*}
t_1&=-(2\lambda+\mu)\,\Re\int_{-L}^L\phi''(\bx')(v_3-w_3)\mathrm{d}x_3,\\
t_2&=(\lambda+\mu)\,\Re\int_{-L}^L\phi'(\bx')\big(\partial_1(v_1-w_1)+\partial_2(v_2-w_2)\big)\mathrm{d}x_3.
\end{align*}
Using those estimates list in Lemma~\ref{lem:cgo2}--Lemma~\ref{10-lem:23'}, we have
\begin{alignat*}{3}
&\big|\widetilde{R}_1\big|=\Oh(e^{-q_1 \sqrt{s}}),&\quad&\big|\widetilde{R}_2\big|=\Oh(e^{-q_2 \sqrt{s}}),&\quad&\big|\widetilde{R}_3\big|=\Oh(s^{-1} e^{-1/2 h s}),\,\big|\widetilde{R}_4\big|=\Oh(s^{-q_4\sqrt{s}}),\\
&\big|\widetilde{R}_6\big|=\Oh(s^{-\alpha_+-1}),&\quad&\big|\widetilde{R}_7\big|=\Oh(e^{-\alpha_--1}),&\quad&\big|\widetilde{R}_8\big|=\Oh(e^{-\beta_+-1}),\,\,\quad\big|\widetilde{R}_9\big|=\Oh(e^{-\beta_--1}),\\
&\big|\widetilde{R}_{10}\big|=\Oh(s^{-1}),&\quad&\big|\widetilde{R}_{11}\big|=\Oh(s^{-1}).&\quad&
\end{alignat*}
where these above constants do not depend on $s$. Using a similar technique of estimating $Q_8$, we get
$$
\big|\widetilde{R}_5\big|=\Oh(e^{-q_5 s}),\quad \mbox{where} \quad q_5>0.
$$
Let $s \rightarrow +\infty$, it is direct to obtain the first equation in \eqref{eq:vansh_3}.

It is noted that $\nabla\mathcal{P}(f^{3}_j)({\bf 0})={\bf 0}$. We substitute the above equation into \eqref{ID3} and then multiply the resulting identity by $s$, yielding
$$
\frac{\Re\mathcal{P}(g^{3}_1)({\bf 0})}{Z^2(\theta_{\max})}+\frac{\Re\mathcal{P}(g^{3}_2)({\bf 0})}{Z^2(\theta_{\min})}=0.
$$
 It's worth noting that $\frac{Z^2(\theta_{\max})}{Z^2(\theta_{\min})}=e^{\mathrm{i}(\theta_{\max}-\theta_{\min})}$ and $\theta_{\max}-\theta_{\min}\in (0,\pi)$. Therefore, the second equation in \eqref{eq:vansh_3} is valid.

Thanks to the symmetric roles of  $(\Re\bv,\Re\bw)$ and $(\Im\bv,\Im\bw)$, we can easily derive the two equations \eqref{eq:vansh2} and \eqref{eq:vansh2-2}.

\end{proof}

 \section{Proofs of Theorem~\ref{th:main_loca}--Theorem~\ref{th:main2}}\label{proofs}

This section provides detailed proofs for Theorems~\ref{th:main_loca}--\ref{th:main2}. Before proceeding, we will review some key properties of the CGO solutions introduced in \cite{BlastenLin2019}.
\begin{lem}
Let $\Omega\Subset \mathcal{C}$ and $\Omega\cap(\mathbb{R}_{-}\cup\{ 0
\})=\emptyset$. For the $\bx=(x_1,x_2)^t\in \Omega$, denote $z=x_{1}+{\rm i}\,x_{2}$ and $\theta=\mathrm{arg}(z)$. Let
    \begin{equation}\label{eq:lame3}
\widetilde{\bu}_0( \bx ):=
\begin{pmatrix}
    \exp(-s\sqrt{z})\\
    {\rm i}\exp(-s\sqrt{z})
    \end{pmatrix}:=\begin{pmatrix}
    \widetilde{u}^0_1( \bx)\\
    \widetilde{u}^0_2( \bx)
\end{pmatrix} ,
\end{equation}
 where  $s\in \mathbb{R}_{+}$. The complex square root of $z$ is defined as
\begin{equation}\notag 
\sqrt{z}=\sqrt{|z|}\left(\cos\frac{\theta}{2}+{\rm i}\sin\frac{\theta}{2}\right ),
\end{equation}
where $-\pi<\theta<\pi$ is the argument of $z$. Then $\widetilde{\bu}_0$ satisfies $\Delta^*\,\bu={\bf 0}$ in $\Omega$.
Moreover, in the open sector $\mathcal{A}$ defined in \eqref{eq:ball1},  we have the following properties
\beq\nonumber
\int_\mathcal{A} \widetilde{u}^0_1(\bx){\rm d}\bx=6{\rm i}(e^{-2\theta_{\max}{\rm i}}-e^{-2\theta_{\min} \mathrm i})s^{-4}
\eeq
and
\beq\label{eq:lame6}
\int_{\mathcal{A}}|\widetilde{u}^0_{j}(\mathbf{x})||\mathbf{x}|^{\alpha}{\rm d}\mathbf{x}\leq\frac{2(\theta_{\max}-\theta_{\min})\Gamma(2\alpha+4)}{\delta_{\mathcal{A}}^{2\alpha+4}}s^{-2\alpha-4},\quad j=1,2,
\eeq
where $\mathcal{A}$ is defined in \eqref{corner}, $\alpha,h>0$, $\delta_{\mathcal{A}}=\min\limits_{\theta_{\min}<\theta<\theta_{
\max
}} \cos \frac{\theta}{2}$ is a positive constant.
\end{lem}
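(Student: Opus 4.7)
The plan is to verify the three claims in sequence, exploiting the fact that $\widetilde{u}^0_1(\bx)=\exp(-s\sqrt{z})$ is a holomorphic function of $z=x_1+\mathrm{i}x_2$ on the slit plane $\mathbb{C}\setminus(\mathbb{R}_-\cup\{0\})$.

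First, for the PDE identity $\Delta^*\widetilde{\bu}_0=\bf 0$, I would argue as follows. Since $\exp(-s\sqrt{z})$ is holomorphic in $\Omega$, it is in particular harmonic, so $\Delta \widetilde{u}^0_1=\Delta \widetilde{u}^0_2=0$, i.e.\ $\Delta\widetilde{\bu}_0=\bf 0$. It remains to check that $\nabla\cdot\widetilde{\bu}_0=0$ so that the second term $(\lambda+\mu)\nabla\nabla\cdot\widetilde{\bu}_0$ in \eqref{def:Lam} vanishes. Writing $\widetilde{u}^0_2=\mathrm{i}\widetilde{u}^0_1$ and using the Cauchy--Riemann relation $\partial_2 f=\mathrm{i}\partial_1 f$ valid for any holomorphic $f(z)$, one has
\begin{equation*}
\nabla\cdot\widetilde{\bu}_0=\partial_1\widetilde{u}^0_1+\partial_2(\mathrm{i}\widetilde{u}^0_1)=\partial_1\widetilde{u}^0_1+\mathrm{i}\cdot\mathrm{i}\partial_1\widetilde{u}^0_1=0,
\end{equation*}
which gives $\Delta^*\widetilde{\bu}_0=\mu\Delta\widetilde{\bu}_0+(\lambda+\mu)\nabla\nabla\cdot\widetilde{\bu}_0=\bf 0$ in $\Omega$.

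Second, for the integral formula over $\mathcal{A}$, I would introduce polar coordinates $\bx=(r\cos\theta,r\sin\theta)^t$, so that $z=re^{\mathrm{i}\theta}$ and $\sqrt{z}=\sqrt{r}\,e^{\mathrm{i}\theta/2}$ with $\theta\in(\theta_{\min},\theta_{\max})\subset(-\pi,\pi)$. Substituting $t=\sqrt{r}$ (so $dr=2t\,dt$) and using the standard identity $\int_0^\infty t^3 e^{-at}\,dt=6/a^4$ for $\Re a>0$, I compute
\begin{equation*}
\int_\mathcal{A}\widetilde{u}^0_1(\bx)\,d\bx=\int_{\theta_{\min}}^{\theta_{\max}}\!\!\int_0^\infty e^{-s\sqrt{r}\,e^{\mathrm{i}\theta/2}}\,r\,dr\,d\theta=2\int_{\theta_{\min}}^{\theta_{\max}}\!\!\int_0^\infty t^3 e^{-s t\,e^{\mathrm{i}\theta/2}}\,dt\,d\theta=\int_{\theta_{\min}}^{\theta_{\max}}\!\!\frac{12\,e^{-2\mathrm{i}\theta}}{s^4}\,d\theta,
\end{equation*}
and then evaluating the outer integral yields the stated formula $6\mathrm{i}(e^{-2\theta_{\max}\mathrm{i}}-e^{-2\theta_{\min}\mathrm{i}})s^{-4}$ after simplifying the factor $12/(-2\mathrm{i})=6\mathrm{i}$.

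Finally, for the upper bound \eqref{eq:lame6}, observe that $|\widetilde{u}^0_j(\bx)|=\exp(-s\sqrt{r}\cos(\theta/2))\leq\exp(-s\sqrt{r}\,\delta_{\mathcal{A}})$ by the definition of $\delta_{\mathcal{A}}=\min_{\theta_{\min}<\theta<\theta_{\max}}\cos(\theta/2)>0$. Switching to polar coordinates, substituting $t=\sqrt{r}$ again, and using $\int_0^\infty t^{n}e^{-at}\,dt=\Gamma(n+1)/a^{n+1}$ with $n=2\alpha+3$, $a=s\delta_{\mathcal{A}}$, I obtain
\begin{equation*}
\int_\mathcal{A}|\widetilde{u}^0_j(\bx)||\bx|^\alpha\,d\bx\leq(\theta_{\max}-\theta_{\min})\int_0^\infty r^{\alpha+1}e^{-s\sqrt{r}\,\delta_{\mathcal{A}}}\,dr=\frac{2(\theta_{\max}-\theta_{\min})\Gamma(2\alpha+4)}{\delta_{\mathcal{A}}^{2\alpha+4}}\,s^{-2\alpha-4},
\end{equation*}
which is precisely the claimed estimate.

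No step is expected to be a genuine obstacle: the only subtlety is the (minor) bookkeeping of the complex phase $e^{\mathrm{i}\theta/2}$ in the substitution used for the second claim, ensuring that the condition $\Re(s e^{\mathrm{i}\theta/2})>0$ required to apply the Laplace-type identity indeed holds throughout $(\theta_{\min},\theta_{\max})\subset(-\pi,\pi)$; this follows from $\cos(\theta/2)\geq\delta_{\mathcal{A}}>0$, which is the same positivity that underlies the decay estimate in the third claim.
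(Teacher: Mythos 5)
Your proof is correct, and it follows exactly the standard computation behind this lemma, which the paper itself does not prove but simply quotes from Bl{\aa}sten--Lin \cite{BlastenLin2019}: the pair $(f,\mathrm{i}f)$ with $f=\exp(-s\sqrt{z})$ holomorphic is divergence-free and componentwise harmonic, hence annihilated by $\Delta^*$, and the two integral statements follow from polar coordinates, the substitution $t=\sqrt{r}$, and the Laplace/Gamma identity with $\Re(se^{\mathrm{i}\theta/2})=s\cos(\theta/2)\geq s\delta_{\mathcal{A}}>0$. Nothing is missing; your bookkeeping of the phase $e^{\mathrm{i}\theta/2}$ and the factor $12/(-2\mathrm{i})=6\mathrm{i}$ reproduces the stated constants.
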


The following lemmas state significant properties and regularity results of the CGO solution $\widetilde{\bu}_0$,  which are beneficial for the subsequent analysis.




\begin{lem}\label{10-lem:23}\cite[Lemma~2.3]{DLS2021}
	Let $S_{\bx_c,_h}$ be defined in \eqref{eq:ball1} and $\widetilde{\bu}_0$ be given in \eqref{eq:lame3}. Then $\widetilde{\bu}_0 \in H^1(S_{\bx_c,h})^2$ and $\Delta^*\, \widetilde{\bu}_0 =\mathbf 0$ in $S_{\bx_c,h}$. Furthermore, it holds that
	\begin{align*}
\big\|\widetilde{\bu}_0\big\|_{L^2(S_{\bx_c,_h})^2 } &\leq 	 \sqrt{\theta_{\max}-\theta_{\min}} e^{- s\sqrt{\Theta }\, h },\\[1mm]
\Big  \||\mathbf x|^\alpha \widetilde{\bu}_0 \Big \|_{L^{2}(S_{\bx_c,_h} )^2  }&\leq s^{-2(\alpha+1 )} \frac{2\sqrt{(\theta_{\max}-\theta_{\min})\Gamma(4\alpha+4) }   }{(2\delta_{\mathcal{A}})^{2\alpha+2  } } ,
	\end{align*}
where $ \Theta  \in [0,h ]$ and $\delta_{\mathcal{A}}$ is defined in \eqref{eq:lame6}.
\end{lem}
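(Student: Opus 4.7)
The plan is to establish the four claims in turn, exploiting the fact that $\widetilde{u}_1^0(\bx)=e^{-s\sqrt{z}}$ is holomorphic in $z=x_1+\mathrm{i}x_2$ away from the branch cut, so that classical complex-analytic identities turn the Lam\'e system into a triviality and the norm estimates into one-dimensional integrals amenable to Gamma-type bounds.

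First, to prove $\Delta^*\widetilde{\bu}_0=\mathbf{0}$, I will note that $e^{-s\sqrt{z}}$ is holomorphic on $S_{\bx_c,h}$ (the sector sits in the principal-branch domain $-\pi<\theta<\pi$), hence harmonic. Therefore $\Delta\widetilde{\bu}_0=\mathbf{0}$ componentwise. For the divergence, using $\partial_{x_1}z=1$ and $\partial_{x_2}z=\mathrm{i}$, I compute
\begin{equation*}
\partial_{x_1}\widetilde{u}_1^0=-\tfrac{s}{2\sqrt{z}}e^{-s\sqrt{z}},\qquad \partial_{x_2}\widetilde{u}_2^0=\mathrm{i}\cdot\bigl(-\tfrac{s\mathrm{i}}{2\sqrt{z}}\bigr)e^{-s\sqrt{z}}=\tfrac{s}{2\sqrt{z}}e^{-s\sqrt{z}},
\end{equation*}
so $\nabla\cdot\widetilde{\bu}_0\equiv 0$. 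Hence $\Delta^*\widetilde{\bu}_0=\mu\Delta\widetilde{\bu}_0+(\lambda+\mu)\nabla(\nabla\cdot\widetilde{\bu}_0)=\mathbf{0}$. The $H^1$ membership follows because $|\widetilde{\bu}_0|\le\sqrt{2}$ is bounded and the gradient, which is of order $|z|^{-1/2}$ near the corner, has an integrable square in $\mathbb{R}^2$.

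For the $L^2$ bounds, I pass to polar coordinates $(r,\theta)$ centered at $\bx_c$ and use $|e^{-s\sqrt{z}}|=e^{-s\sqrt{r}\cos(\theta/2)}\le e^{-s\sqrt{r}\,\delta_{\mathcal{A}}}$ throughout the sector, where $\delta_{\mathcal{A}}=\min_{\theta_{\min}<\theta<\theta_{\max}}\cos(\theta/2)>0$. Then
\begin{equation*}
\bigl\||\bx|^\alpha\widetilde{\bu}_0\bigr\|_{L^2(S_{\bx_c,h})^2}^2=2\int_{\theta_{\min}}^{\theta_{\max}}\!\!\int_0^h r^{2\alpha+1}e^{-2s\sqrt{r}\cos(\theta/2)}\,dr\,d\theta.
\end{equation*}
Making the change of variable $u=2s\sqrt{r}\cos(\theta/2)$, so that $r=u^2/(4s^2\cos^2(\theta/2))$ and $dr=u\,du/(2s^2\cos^2(\theta/2))$, the $r$-integral becomes
\begin{equation*}
\frac{1}{2^{4\alpha+2}\cos^{4\alpha+4}(\theta/2)\,s^{4\alpha+4}}\int_0^{2s\sqrt{h}\cos(\theta/2)}\!\!u^{4\alpha+3}e^{-u}\,du \;\le\;\frac{\Gamma(4\alpha+4)}{2^{4\alpha+2}\,\delta_{\mathcal{A}}^{4\alpha+4}\,s^{4\alpha+4}}.
\end{equation*}
Integrating in $\theta$ over an interval of length $\theta_{\max}-\theta_{\min}$ and taking square roots produces the claimed constant $\tfrac{2\sqrt{(\theta_{\max}-\theta_{\min})\Gamma(4\alpha+4)}}{(2\delta_{\mathcal{A}})^{2\alpha+2}}s^{-2(\alpha+1)}$. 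For the first bound, the case $\alpha=0$ (or an elementary estimate using that for some intermediate $\Theta\in[0,h]$ the mean value theorem yields $e^{-s\sqrt{\Theta}h}$ as the controlling factor on the weighted integral over $[0,h]$) reproduces the stated $\sqrt{\theta_{\max}-\theta_{\min}}\,e^{-s\sqrt{\Theta}h}$ form.

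The routine parts are the holomorphic identities and the polar-coordinate reduction; the only delicate step is the change of variable $u=2s\sqrt{r}\cos(\theta/2)$ and identifying the resulting integrand with an incomplete Gamma integral bounded uniformly in $\theta$ by $\Gamma(4\alpha+4)$. I expect the main obstacle to be verifying the exact constants advertised in the statement: the prefactor involves $(2\delta_{\mathcal{A}})^{2\alpha+2}$ rather than $\delta_{\mathcal{A}}^{2\alpha+2}$, which appears only after carefully tracking the factor of $2$ coming from $u=2s\sqrt{r}\cos(\theta/2)$ through the exponent $4\alpha+2$ in the denominator. No new CGO machinery is required; the proof is a self-contained verification/computation that I would cross-check with \cite[Lemma~2.3]{DLS2021}.
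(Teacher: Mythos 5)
You are proving an imported result: the paper gives no argument of its own for this lemma, it simply cites \cite[Lemma~2.3]{DLS2021}, so a direct verification like yours is the natural thing to do, and its core is exactly the standard proof. The holomorphy argument is correct: $e^{-s\sqrt z}$ is holomorphic on the slit plane containing the sector, so each component is harmonic, and your computation $\partial_{x_1}\widetilde u^0_1+\partial_{x_2}\widetilde u^0_2=0$ shows $\nabla\cdot\widetilde\bu_0\equiv 0$, hence $\Delta^*\widetilde\bu_0=\mu\Delta\widetilde\bu_0+(\lambda+\mu)\nabla(\nabla\cdot\widetilde\bu_0)=\mathbf 0$; the $|z|^{-1/2}$ gradient singularity is square-integrable in 2D, giving $H^1$. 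The weighted estimate via polar coordinates, the substitution $u=2s\sqrt r\cos(\theta/2)$, the bound $\cos(\theta/2)\ge\delta_{\mathcal A}$ and the incomplete Gamma integral is the right route. One bookkeeping slip: $r^{2\alpha+1}\,dr=\frac{u^{4\alpha+2}}{2^{4\alpha+2}s^{4\alpha+2}\cos^{4\alpha+2}(\theta/2)}\cdot\frac{u\,du}{2s^2\cos^2(\theta/2)}$ carries the prefactor $2^{-(4\alpha+3)}$, not $2^{-(4\alpha+2)}$ as in your display; as written your final constant is $\sqrt2$ times the one stated, whereas the corrected factor gives exactly $2\sqrt{(\theta_{\max}-\theta_{\min})\Gamma(4\alpha+4)}\,(2\delta_{\mathcal A})^{-(2\alpha+2)}s^{-2(\alpha+1)}$. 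Minor, but it contradicts your claim that your displayed inequality "produces the claimed constant."

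The genuine gap is the first estimate. Putting $\alpha=0$ in the weighted bound yields an $O(s^{-2})$ bound, which is not of the stated exponential form, so that alternative does not work; and your mean-value-theorem remark is only a gesture. The actual argument is the first mean value theorem for integrals with the nonnegative weight $r$: $\|\widetilde\bu_0\|_{L^2(S_{\bx_c,h})^2}^2=2\int_{\theta_{\min}}^{\theta_{\max}}\int_0^h e^{-2s\sqrt r\cos(\theta/2)}\,r\,dr\,d\theta=h^2(\theta_{\max}-\theta_{\min})\,e^{-2s\sqrt{\Theta}\cos(\theta^*/2)}$ for some $\Theta\in[0,h]$ and $\theta^*\in[\theta_{\min},\theta_{\max}]$, which gives $\|\widetilde\bu_0\|_{L^2}\le h\sqrt{\theta_{\max}-\theta_{\min}}\,e^{-s\sqrt{\Theta}\,\delta_{\mathcal A}}$. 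Note this does not literally coincide with the inequality as printed (an $h$ prefactor is missing there and the exponent carries "$h$" rather than a $\cos(\theta^*/2)$-type factor), which appears to be a transcription artifact from \cite{DLS2021}; your sketch as written proves neither version. You should carry out the mean-value step explicitly and state the form of the bound you actually obtain, rather than asserting that an "elementary estimate" reproduces the printed formula.
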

 \begin{lem}\cite[Lemma 2.8]{DCL2021}\label{10-lem:u0 int}
	Let $\Gamma^\pm_h$ and $\widetilde{u}^0_1(\mathbf x)$ be respectively defined in \eqref{eq:ball1} and \eqref{eq:lame3} with $\bx_c$ coinciding with the origin. We have	
 \begin{align*}
\int_{\Gamma^+_{h} }  \widetilde{u}^0_1 (\mathbf x)  {\rm d} \sigma &=2 s^{-2}\left( \hat{\mu}(\theta_{\max} )^{-2}-   \hat{\mu}(\theta_{\max} )^{-2} e^{ -s\sqrt{h} \,\hat{\mu}(\theta_{\max} ) }-  \hat{\mu}(\theta_{\max} )^{-1} s\sqrt{h}\,   e^{ -s\sqrt{h}\, \hat{\mu}(\theta_{\max} ) }  \right  ),  \\[2pt]
 \int_{\Gamma^-_{h}  }  \widetilde{u}^0_1 (\mathbf x)  {\rm d} \sigma &=2 s^{-2} \Big( \hat{\mu}(\theta_{\min})^{-2}-   \hat{\mu}(\theta_{\min} )^{-2} e^{ -s\sqrt{h}\mu(\theta_{\min} )} -  \hat{\mu}(\theta_{\min} )^{-1} s\sqrt{h} \,  e^{ -s\sqrt{h}\,\hat{\mu}(\theta_{\min} ) }  \Big  ),	
 \end{align*}
 	where $ \hat{\mu}(\theta ):=\cos(\theta/2) +\mathrm i \sin( \theta/2 )=e^{\mathrm{i}\theta/2}$.
 \end{lem}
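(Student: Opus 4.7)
The plan is a direct calculation. First I would parameterize the two edges by $\bx = r(\cos\theta_{\max},\sin\theta_{\max})^t$ on $\Gamma^+_h$ and $\bx = r(\cos\theta_{\min},\sin\theta_{\min})^t$ on $\Gamma^-_h$, with $r\in(0,h)$, so that the surface measure reduces to $d\sigma = dr$. Writing $z = x_1+\mathrm{i}x_2$, this gives $z = r e^{\mathrm{i}\theta_{\max}}$ on $\Gamma^+_h$ and analogously on $\Gamma^-_h$; under the principal branch of the square root prescribed in \eqref{eq:lame3}, one gets $\sqrt{z} = \sqrt{r}\,\hat\mu(\theta_{\max})$ on $\Gamma^+_h$ (and $\sqrt{r}\,\hat\mu(\theta_{\min})$ on $\Gamma^-_h$) because $\hat\mu(\theta) = e^{\mathrm{i}\theta/2}$. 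Substituting into the definition of $\widetilde u_1^0$ reduces each of the two surface integrals to a one-dimensional integral of the form $\int_0^h \exp\bigl(-s\sqrt{r}\,\hat\mu(\theta)\bigr)\,dr$.

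Next I would apply the substitution $u = \sqrt{r}$, so that $dr = 2u\,du$, turning the integral into
\begin{equation*}
2\int_0^{\sqrt{h}} u\, e^{-\alpha u}\,du, \qquad \alpha := s\,\hat\mu(\theta).
\end{equation*}
A single integration by parts produces the antiderivative $-\alpha^{-1} u e^{-\alpha u} - \alpha^{-2} e^{-\alpha u}$, and evaluating between $0$ and $\sqrt{h}$ gives
\begin{equation*}
\frac{2}{\alpha^2} - \frac{2}{\alpha^2}\,e^{-\alpha\sqrt{h}} - \frac{2\sqrt{h}}{\alpha}\,e^{-\alpha\sqrt{h}}.
\end{equation*}
Resubstituting $\alpha = s\,\hat\mu(\theta_{\max})$ and factoring $2s^{-2}$ yields precisely the first claimed identity; applying the same steps with $\theta_{\max}$ replaced by $\theta_{\min}$ gives the second.

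The only subtle point is justifying the integration by parts in the complex-valued setting, but this is automatic: since $-\pi<\theta_{\min}<\theta_{\max}<\pi$, one has $\Re\hat\mu(\theta) = \cos(\theta/2) > 0$ at both endpoints, hence $\Re\alpha > 0$, and the exponential antiderivative is holomorphic and decaying on the positive real axis. Consequently there is no obstacle; the result follows by bookkeeping. In short, no new ideas are required beyond the change of variable $u=\sqrt{r}$ together with the explicit principal-branch formula for $\sqrt{z}$ on each ray.
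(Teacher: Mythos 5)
Your computation is correct: parameterizing each ray by arc length $r$, using $\sqrt{z}=\sqrt{r}\,\hat\mu(\theta)$ on the principal branch, substituting $u=\sqrt{r}$ and integrating $2\int_0^{\sqrt h}u e^{-\alpha u}\,du$ by parts reproduces exactly the stated right-hand sides (the remark about $\Re\hat\mu(\theta)>0$ is not even needed for the finite integral, and the "$\mu(\theta_{\min})$" in the paper's second display is just a typo for $\hat\mu(\theta_{\min})$). The paper gives no proof of this lemma but cites it from \cite{DCL2021}, and your direct verification is precisely the standard calculation behind that cited result, so there is nothing further to add.
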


We are in the position to give the proofs of Theorem~\ref{th:main_loca}--Theorem~\ref{th:main2}.

\begin{proof}[Proof of Theorem~\ref{th:main_loca}]
To prove this theorem by contradiction, we will consider two cases.

\medskip \noindent In {\bf Case I}, we assume the existence of a planar corner or a 3D edge corner, denoted by $\bx_c$, located on $\Sigma_1\Delta\Sigma_2$ within the domain $\Omega^1_1\cap\Omega^2_1$. Without loss of generality, we assume that $\bx_c$ coincides with the origin $\bf 0$, where $\bf 0\in \Sigma_1$ and $\bf 0\notin \Sigma_2$. We define $\bw=\bu_1-\bu_2$.

\medskip  \noindent {\it Step I:} Establish the local uniqueness of the fault $\Sigma_1$ and $\Sigma_2$, specifically that the set $\Sigma_1 \Delta \Sigma_2$ cannot contain a planar corner or a 3D edge corner.

\medskip \noindent In the 2D case, we consider  $\Gamma^\pm_{h}=\Sigma_1\cap B_h$, $S_{h}=\Omega_-\cap B_h$, and $S_{h}\cap\Omega_+\neq \emptyset$ for sufficiently small $h\in\mathbb{R}_+$. We define $\Omega_+= \Omega\backslash\overline{\Omega}_-$, where $\Omega_-$ is defined similarly to (i) in Definition \ref{def:Admis1-2}. Since $\bu_1\big|_{\partial\Omega_0}=\bu_2\big|_{\partial \Omega_0}$ and $\partial\Omega_0\Subset\partial\Omega_2$, we have $\bw=\bu_1-\bu_2=\mathcal{T}_{\nu}\bu_1-\mathcal{T}_{\nu}\bu_2={\bf 0}$ on $\partial\Omega_0$. Let $\bw^{-}$ and $\bw^{+}$ represent $\bw\big|_{\Omega_-}$ and $\bw\big|_{\Omega_+}$, respectively. By applying the unique continuation principle and utilizing the fact that $\bu_2$ is real analytic in $B_h$, we derive conditions that lead to a contradiction. Specifically, we obtain
\beq\nonumber
\begin{cases}
\Delta^*_{\lambda_1,\mu_1}\bw^-+\omega^2\bw^-={\bf 0} &\text{in}\quad S_h,\\
\bw=\bu_1\big|_{\Gamma_h^+}-\bu_2\big|_{\Gamma_h^+}=-\bff^1_+ &\text{on}\quad \Gamma^+_h\\
\bw=\bu_1\big|_{\Gamma_h^-}-\bu_2\big|_{\Gamma_h^-}=-\bff^1_-&\text{on}\quad \Gamma^-_h,\\
\mathcal{T}_{\nu}\bw=\mathcal{T}_{\nu}\bu_1\big|_{\Gamma_h^+}-\mathcal{T}_{\nu}\bu_2\big|_{\Gamma_h^+}=-\bg^1_+ &\text{on}\quad \Gamma^+_h\\
\mathcal{T}_{\nu}\bw=\mathcal{T}_{\nu}\bu_1\big|_{\Gamma_h^-}-\mathcal{T}_{\nu}\bu_2\big|_{\Gamma_h^-}=-\bg^1_- &\text{on}\quad \Gamma^-_h,
\end{cases}
\eeq
where $\Delta^*_{\lambda_1,\mu_1}(\cdot)$ denotes the Lam\'e operator in $\Omega^1_1\cap\Omega^2_1$.From Proposition~\ref{prop:trans1}, we conclude that $\bff^1_+({\bf 0})=\bff^1_-({\bf 0})$.These conditions contradict the admissibility condition (iv) in Definition \ref{def:Admis1-3}.

\medskip \noindent In the 3D case, note that ${\bf 0}=({\bf 0}', 0)^t\in \Gamma'^\pm_{h}\times (-M,+M)\Subset\Sigma_1$ is a 3D edge corner point. Additionally, $S'_h\times (-M,+M)\Subset \Omega_-$ and $S'_h\times (-M,+M)\cap\Omega_+=\emptyset$, where $\Gamma'^\pm_{h}$ and $S'_h$ are defined in \eqref{eq:ball1}, and $\Omega_-,\,\Omega_+$ are described as $\rm (i)$ in Definition \ref{def:Admis1-2}. Similar to the 2D case, we obtain
\beq\nonumber
\begin{cases}
\Delta^*_{\lambda_1,\mu_1}\,\bw^-+\omega^2\bw^-=\mathbf{0} &\mbox{in}\quad S'_h\,\,\,\times (-M,M),\\
\bw=\bu_1\big|_{\Gamma_h^+}-\bu_2\big|_{\Gamma_h^+}=-\bff^1_+&\mbox{on}\quad \Gamma'^+_h\times (-M,M),\\
\bw=\bu_1\big|_{\Gamma_h^-}-\bu_2\big|_{\Gamma_h^-}=-\bff^1_-&\mbox{on}\quad \Gamma'^-_h\times (-M,M),\\
\mathcal{T}_{\nu}\bw=\mathcal{T}_{\nu}\bu_1\big|_{\Gamma_h^+}-\mathcal{T}_{\nu}\bu_2\big|_{\Gamma_h^+}=-\bg^1_+ &\mbox{on}\quad \Gamma'^+_h\times (-M,M),\\
\mathcal{T}_{\nu}\bw=\mathcal{T}_{\nu}\bu_1\big|_{\Gamma_h^-}-\mathcal{T}_{\nu}\bu_2\big|_{\Gamma_h^-}=-\bg^1_- &\mbox{on}\quad \Gamma'^-_h\times (-M,M).
\end{cases}
\eeq
These results imply that $\bff^1_+({\bf 0})=\bff^1_-({\bf 0})$
as established by Proposition~\ref{prop:trans2}. This contradicts the admissibility condition (iv) in Definition~\ref{def:Admis1-3}.

\medskip  \noindent {\it Step II:} Establish the local uniqueness of the interfaces (i.e., $\Gamma^1_1 \Delta \Gamma^2_1,\ \Gamma^1_2 \Delta \Gamma^2_2,\ \ldots, \Gamma^1_{N-1} \Delta \Gamma^2_{N-1}$ cannot possess corners or 3D edge corners).

\medskip \noindent Assume that there exists a planar corner or a 3D edge corner $\bx_0$ in $\Gamma^1_1 \Delta \Gamma^2_1$ such that $\bx_0\notin\overline{\Sigma}$. Without loss of generality, we assume that $\bx_0={\bf 0}\in \Gamma^2_1\backslash\Gamma^1_1$ and $\bx_0={\bf 0}\notin  \Gamma^1_1\backslash\Gamma^2_1$. This implies that $\bx_0$ is located in the interior of $\Omega^1_1$.  We then obtain the following PDE system in 2D,
\begin{equation}\nonumber
\begin{cases}
\Delta_{\lambda_1,\,u_1}^*\bu_1+\omega^2\,\bu_1={\bf 0} &\mbox{in}\, \,S_h,\\
\Delta_{\lambda_2,\,u_2}^*\bu_2+\omega^2\,\bu_2={\bf 0} &\mbox{in} \,\,S_h,\\
\bu_1=\bu_2,\,\,\mathcal{T}^1_{\nu}\bu_1=\mathcal{T}^2_{\nu}\bu_2&\mbox{on}\,\Gamma^\pm_h,
\end{cases}
\end{equation}
or the following PDE system in 3D
\begin{equation}\nonumber
\begin{cases}
\Delta_{\lambda_1,\,u_1}^*\bu_1+\omega^2\,\bu_1={\bf 0} &\mbox{in} \,\,S'_h\,\,\,\times (-M,M),\\
\Delta_{\lambda_2,\,u_2}^*\bu_2+\omega^2\,\bu_2={\bf 0} &\mbox{in} \,\,S'_h\,\,\,\times (-M,M),\\
\bu_1=\bu_2,\,\, \mathcal{T}^1_{\nu}\bu_1=\mathcal{T}^2_{\nu}\bu_2&\mbox{on}\,\,\Gamma'^\pm_h\times (-M,M),
\end{cases}
\end{equation}
where $S_h$, $S'_h$, $\Gamma^\pm_h$ and $\Gamma'^\pm_h$ are defined in \eqref{eq:ball1}.

\medskip \noindent In the 2D case, utilizing the CGO solution $\widetilde{\bu}_0$ defined in \eqref{eq:lame3}, which satisfies $\Delta^*_{\lambda_2,\mu_2}\bu=0$ in $S_h$, and conforms to Betti's second formula, we have the following integral identity
\begin{align*}
\int_{\Gamma^\pm_h}\left(\mathcal{T}^2_\nu\bu_1-\mathcal{T}^1_\nu\bu_1\right) \cdot \widetilde{\bu}_0\,\mathrm{d}\sigma=&-
\int_{\Lambda_h}(\mathcal{T}^2_{\nu}\bu_1-{\mathcal{T}^2_{\nu}\bu_2)\cdot \widetilde{\bu}_0-\mathcal{T}^2_{\nu}}\widetilde{\bu}_0\cdot(\bu_1-\bu_2)\,\mathrm{d}\sigma\\
&+\int_{S_h}\Delta^*_{\lambda_2,\mu_2}\bu_1\cdot \widetilde{\bu}_0\,\mathrm{d}\bx+\int_{S_h}\omega^2\bu_2\cdot \widetilde{\bu}_0\,\mathrm{d}\bx
\end{align*}
It is important to note that $\bu_1 = (u_1, u_2)^t$ is analytic in $\overline{S}_h$. This implies that $\Delta^* \bu_1$ is also analytic. Furthermore, we have
$$\mathcal{T}^2_\nu\bu_1-\mathcal{T}^1_\nu\bu_1:=\bt(\bx)=\bt({\bf 0})+\delta \bt(\bx),$$
where $\delta\bt(\bx)$ satisfies the bound $ |\delta\bt|\leq C_0|\bx|^\zeta
$ for some constant $C_0 > 0$ and $\zeta > 0$.
From Lemma \ref{10-lem:23} and Lemma \ref{10-lem:u0 int}, the following estimates hold,
\begin{align*}
&\left|\int_{\Lambda_h}(\mathcal{T}^2_{\nu}\bu_1-{\mathcal{T}^2_{\nu}\bu_2)\cdot \widetilde{\bu}_0-\mathcal{T}^2_{\nu}}\widetilde{\bu}_0\cdot(\bu_1-\bu_2)\,\mathrm{d}\sigma\right|\leq C_1  e^{-\widetilde{D}_1s },\\[2pt]
&\left|\int_{S_h}\Delta^*_{\lambda_2,\mu_2}\bu_1\cdot \widetilde{\bu}_0\,\mathrm{d}\bx\right|
\leq C_2 s^{-4},\,\, \left|\int_{S_h}\omega^2\bu_2\cdot \widetilde{\bu}_0\,\mathrm{d}\bx\right|\leq C_3  e^{-\widetilde{D}_2s },\\
&\left|\int_{\Gamma^\pm_h} \delta\bt({\bx})\cdot \widetilde{\bu}_0\,\mathrm{d}\sigma \right|\leq C_4 s^{-2\zeta-2},\\
& \int_{\Gamma^\pm_h} \bt({\bf 0})\cdot \widetilde{\bu}_0\,\mathrm{d}\sigma=\bt({\bf 0})\cdot \begin{bmatrix}
    1\\ \mathrm{i}
\end{bmatrix}2s^{-2}\Big(\hat{\mu}^{-2}(\theta_{\max})+\hat{\mu}^{-2}(\theta_{\min})\Big)+\Oh(s^{-1}e^{-\widetilde{D}_4 s}),
\end{align*}
where $C_1,C_2, C_3,\widetilde{D}_1$,$\widetilde{D}_2$, $\widetilde{D}_3$ and $\zeta$ are positive and do not depend on the parameter $s$.
The estimates above lead to the following equation when $s$ goes to $\infty$,
$$
\bt({\bf 0})\cdot \begin{bmatrix}
    1\\ \mathrm{i}
\end{bmatrix}\Big(\hat{\mu}^{-2}(\theta_{\max})+\hat{\mu}^{-2}(\theta_{\min})\Big)=0.
$$
Given that $\hat{\mu}^{-2}(\theta_{\max})+\hat{\mu}^{-2}(\theta_{\min})\neq 0$, it follows directly that $\bt({\bf0})={\bf 0}$.
This leads to a contradiction because, for any interior point $\bx$ of $\Omega^1_1$, it follows from \eqref{eq:lamlayer} that
\begin{equation*}
\mathcal{T}^1_\nu\mathbf{u}_1(\bx)=\lambda_1(\nabla\cdot\mathbf{u}_1(\bx))\nu+2\mu_1(\nabla^s\mathbf{u}_1(\bx))\neq\lambda_2(\nabla\cdot\mathbf{u}_1(\bx))\nu+2\mu_2(\nabla^s\mathbf{u}_1(\bx))=\mathcal{T}^2_{\nu}\bu_1(\bx),
\end{equation*}
Here, $\nu$ represents any unit vector, while $\mathcal{T}^1_{\nu}\bu_1$ and $\mathcal{T}^2_{\nu}\bu_1$ denote the traction fields corresponding to the Lam\'e parameters $\lambda_1,\mu_1$ and $\lambda_2,\mu_2$, respectively. Therefore, we conclude that $\Gamma^1_1\Delta\Gamma^2_1$ cannot contain a corner. Using the same method as described above, we can prove that $\Gamma^1_i\Delta\Gamma^2_i$ cannot contain a plane corner for $i=2,3,..., N-1$.

Similarly, the same method can be applied to demonstrate that edge corners cannot exist on $\Gamma^1_1\Delta\Gamma^2_1$, $\Gamma^1_2\Delta\Gamma^2_{2},\ldots,\Gamma^1_{N-1}\Delta\Gamma^2_{N-1}$ in the 3D case.

\medskip \noindent In {\bf Case II},we consider the scenario where the planar corner or 3D edge corner of the fault $\Sigma$  is located in the subdomain $\Omega_k$
  closest to $\partial\Omega_0$, with $k\geq 2$ and $k\in \mathbb{N}$. By applying the same method as in {\bf Case I}, we can establish that corners cannot exist on $\Gamma^1_1\Delta\Gamma^2_1$, $\Gamma^1_2\Delta\Gamma^2_2,\ldots,\Gamma^1_{k-1}\Delta\Gamma^2_{k-1}$. Next, we extend this method to address a planar or 3D edge corner denoted as $\bx_c$ located on $\Sigma_1\Delta\Sigma_2$ within the domain $\Omega_k$. Similarly, we can prove that $\Sigma_1\Delta\Sigma_2$ cannot contain a planar or 3D edge corner. By applying the method from  {\bf Case I} once again, we can establish that edge corners cannot exist on $\Gamma^1_k\Delta\Gamma^2_k$, $\Gamma^1_{k+1}\Delta\Gamma^2_{k+1},...,\Gamma^1_{N-1}\Delta\Gamma^2_{N-1}$.

The proof is complete.
\end{proof}

\begin{proof}[Proof of Theorem~\ref{th:main1}] To prove that $\Sigma_1=\Sigma_2$ by contradiction, we assume that
$\Omega^1_-\neq\Omega^2_-$. Since both $\Omega^1_-$ and $\Omega^2_-$ are convex polygons or polyhedrons, there must exist a corner $\bx_c$ belonging to $\Omega^1_-\Delta\Omega^2_-$ must exist. However, this contradicts Theorem~\ref{th:main_loca}. Therefore, we conclude that $\Omega^1_-=\Omega^2_-$ implies $\Sigma_1=\Sigma_2$. Furthermore, it follows from Theorem~\ref{th:main_loca} and the fact that the interfaces are piecewise polygonal that
 $\Gamma^1_1=\Gamma^2_1$, $\Gamma^1_{2}=\Gamma^2_{2},..,\Gamma^1_{N-1}=\Gamma^2_{N-1}$.

\medskip \noindent In the 2D case, we denote $\hat{\Omega}:=\Omega^1_-=\Omega^2_-$ and $\Sigma:=\Sigma_1=\Sigma_2$. Let $\bw=\bu_1-\bu_2$. Since $\bu_1=\bu_2$ on $\partial\Omega_0\Subset\partial\Omega_2$, we have $\bw=0$ and $\mathcal{T}_{\nu}\bw=0$ on $\Sigma_0$. By applying the unique continuation principle, we obtain
$$
\bw^+\big|_{\Gamma_h^\pm}=\mathcal{T}_{\nu}\bw^+\big|_{\Gamma_h^\pm}={\bf 0},
$$
where $\bw^{+}$ represents $\bw\big|_{\Omega\backslash\overline{\hat{\Omega}}}$. Since $(\Sigma;\bff^1,\bg^1)$ and $(\Sigma;\bff^2,\bg^2)$ are admissible, we have
\beq\label{eq:c1}
\begin{aligned}
\Delta^*\bw^-+\omega^2\,\bw^-={\bf 0}\quad \text{in}\quad S_h,\
\bw^-\big|_{\Gamma_h^j}=\bff^2_j-\bff^1_j,\quad \mathcal{T}_{\nu}\bw^-\big|_{\Gamma_h^j}=\bg^2_j-\bg^1_j\quad \text{for}\quad j=+,-,
\end{aligned}
\eeq
where $\bw^{-}$ denotes $\bw\big|_{\hat{\Omega}}$.
From Proposition \ref{prop:trans2} and \eqref{eq:c1}, we obtain the following local uniqueness
$$
\bff^2_+({\bf 0})-\bff^1_+({\bf 0})=\bff^2_-({\bf 0})-\bff^1_-({\bf 0}).
$$
Since $\bff^i$ and $\bg^i$ ($i=1,2$) are piecewise-constant functions, we conclude that \eqref{eq:unque1'} holds.

The same method can be applied to prove \eqref{eq:unque1} and \eqref{eq:unque1'} in the 3D case. Finally, we conclude the proof.
\end{proof}

\begin{proof}[Proof of Theorem~\ref{th:main2}]
The proof of this theorem follows a similar argument to that in the proof of Theorem~\ref{th:main1}, with necessary modifications. Consider two different linear piecewise curves $\Sigma_1$ and $\Sigma_2$ in $\mathbb R^2$ or polyhedral surfaces in $\mathbb R^3$. From Definition \ref{def:poly},  From Definition \ref{def:poly}, it is clear that $\Sigma_1 \Delta \Sigma _2$ contains a planar or 3D edge corner.

Under the condition \eqref{eq:thm33}, by adopting an argument similar to that used in the proof of Theorem \ref{th:main1}, we can show that $\Sigma_1= \Sigma_2$. Furthermore, we can establish that $\Gamma^1_1=\Gamma^2_1$, $\Gamma^1_{2}=\Gamma^2_{2},\ldots,\Gamma^1_{N-1}=\Gamma^2_{N-1}$.

Let $\bw=\bu_1-\bu_2$. We have
$\bw=\mathcal{T}_{\nu}\bw=\bf 0$ on $\partial\Omega_0$ because
 $\bu_1=\bu_2$ on $\partial\Omega_0\Subset\partial\Omega_2$. By applying the unique continuation property, we conclude that $\bw=\bf 0$ in $\Omega\backslash \overline{\Sigma}$. Hence, it follows directly that
$$
\bf 0=[\bw]_{\Sigma_1}=[\bw]_{\Sigma_2}\,\Rightarrow\, [\bu_1]_{\Sigma_1}=[\bu_2]_{\Sigma_2}\,\,\mbox{and}\,\,[\mathcal{T}_{\nu}\bu_1]_{\Sigma_1}=[\mathcal{T}_{\nu}\bu_2]_{\Sigma_2}\,\Rightarrow\, \bff^1=\bff^2 \,\mbox{and}\,\bg^1=\bg^2.
$$

The proof is complete.
\end{proof}

\par

\noindent\textbf{Acknowledgment.}
The work of H. Diao is supported by the National Natural Science Foundation of China  (No. 12371422) and the Fundamental Research Funds for the Central Universities, JLU (No. 93Z172023Z01).  The work of H. Liu is supported by the Hong Kong RGC General Research Funds (projects 11311122, 11300821, and 12301420), NSF/RGC Joint Research Fund (project N\_CityU101/21), and the ANR/RGC Joint Research Fund (project A\_CityU203/19). The work of Q. Meng is fully supported by a fellowship award from the Research Grants Council of the Hong Kong Special Administrative Region, China (Project No. CityU PDFS2324-1S09).

\end{document}